\newtheorem{Theorem}{Theorem}
\newtheorem{Lemma}[Theorem]{Lemma}
\newtheorem{Remark}{Remark}
\newtheorem{Definition}{Definition}
\newtheorem{Assumption}{Assumption}
\newcommand{\Uad}{\mathcal{U}}
\newcommand{\Pb}{\mbox{\rm (P)}\xspace}
\newcommand{\dx}{\,\mathrm{d}x}
\newcommand{\dt}{\,\mathrm{d}t}
\newcommand{\llangle}{\big\langle}
\newcommand{\rrangle}{\big\rangle}
\newcommand{\bd}{\begin{displaymath}}
\newcommand{\ed}{\end{displaymath}}
\newcommand{\be}{\begin{equation}}
\newcommand{\ee}{\end{equation}}
\newcommand{\bea}{\begin{eqnarray}}
\newcommand{\eea}{\end{eqnarray}}
\newcommand{\bda}{\begin{eqnarray*}}
\newcommand{\eda}{\end{eqnarray*}}
\newcommand{\ba}{\begin{array}}
\newcommand{\ea}{\end{array}}
\renewenvironment{proof}[1]{%
\par\vspace{1\baselineskip}%
\noindent{\em Proof#1.\ \ }\ignorespaces }{%
\nobreak\hfill\mbox{\ \ $\Box$}%
\par\vspace{1\baselineskip}%
}
\newcommand{\A}{{\mathcal A}}
\newcommand{\sth}{ \, :\;}
\newcommand{\dd}{\mbox{\rm\,d}}
\newcommand{\Z}{{\cal Z}}
\def\R{\mathbb{R}}
\newcommand{\reff}{\eqref}
\newcommand{\bino}{\bigskip\noindent}
\numberwithin{equation}{section}
\title{Solution stability of parabolic optimal control problems with fixed state-distribution of the controls\thanks{
The research presented in this paper is supported by the Austrian Science Foundation (FWF) under grant No I4571.}}
\author{Alberto Dom\'inguez Corella\footnote{Institute of Statistics and Mathematical Methods in Economics, 
Vienna University of Technology, Austria, {\tt alberto.corella@tuwien.ac.at}}
\and Nicolai Jork\footnote{The same affiliation, {\tt nicolai.jork@tuwien.ac.at}}
\and Vladimir M. Veliov\footnote{The same affiliation, {\tt vladimir.veliov@tuwien.ac.at}} }
\date{}
\begin{document}

\maketitle

\begin{abstract}
The paper presents results about strong metric subregularity of the optimality mapping associated with the system
of first-order necessary optimality conditions for a problem of optimal control of a semilinear parabolic equation. 
The control has a predefined spatial distribution and only the magnitude at any time is a subject of choice.
The obtained conditions for subregularity imply, in particular, sufficient optimality conditions that 
extend the known ones.  

The paper is complementary to a companion one by the same authors, in which a distributed control is considered.
\end{abstract}

\section{Introduction} \label{SIntro}

Let $T\in \mathbb R$ and let $\Omega\subset\mathbb R^n$, $1\le n\le 3$, be a bounded domain with Lipschitz boundary 
$\partial \Omega$. Denote by $Q:=\Omega\times (0,T)$ the space-time cylinder and by 
$\Sigma:=\partial \Omega \times (0,T)$ its lateral boundary.
In the present paper, we investigate the following optimal control problem:
	\begin{equation}
 		  \Pb \  \ \min_{u \in \Uad}\bigg\{ J(u) := \int_Q [L_0(x,t,y(x,t))+\langle L_1(x,t,y(x,t)),u(t) \rangle] \dx\dt\bigg\},
	\label{ocp1}
	\end{equation}
subject to
\be
		\left\{ \begin{array}{ll}
		\frac{\partial y}{\partial t}+\mathcal Ay+f(x,t,y) =\langle g(x), u(t)\rangle  &\text{ in }\ Q,\\
		\qquad\; y=0 \text{ on } \Sigma,\quad y(\cdot,0)  =  y_0\ &\text{ on } \Omega.\\
	\end{array} \right.
	\label{see1}
\ee
Here $y: Q \to \R$ is the state, $u : [0,T] \to \R^m$, is the control, $m\in \mathbb{N}$, $\langle \cdot, \cdot \rangle$ is the scalar 
product in $\R^m$, the functions $L_0, L_1, f, g$ are of corresponding dimensions, $\A$ is an elliptic operator.
Moreover, $g:=(g_1,...,g_m)$ with $g_j\in L^\infty(\Omega)$ satisfies
$\textrm{supp}( g_j)\cap \textrm{supp}( g_i)  =\emptyset$ for all $i,j=1,...,m$, $i \not= j$ 
and $\text{meas}(\textrm{supp}( g_i))>0$ for at least one $i$. The set of admissible controls is 
\be \label{const}
		\Uad := \{u \in L^r(0,T)^m \vert \ u_{a,j} \le u_j \le u_{b,j}\ \text{ for a.e. } t \in [0,T],\ 1\leq j\leq m\},
\ee
where $u_a, u_b\in L^\infty(0,T)^m$ and $ u_{a,j}(t) < u_{b,j}(t) $ a.e. in $[0,T]$, $1\leq j\leq m$.

\bino
In the stability analysis and for approximation methods for optimization problems, in general, an important 
role is played by several regularity properties of the system of first order necessary optimality conditions, see e.g. \cite{DR}. 
The {\em Strong Metric subRegularity} (SMsR) property, \cite{DR,CDK},
of the mapping associated with this system, the so-called {\em optimality mapping}, is especially relevant to the
analysis of numerical methods. This property of the optimality mapping associated with problem \reff{ocp1}--\reff{const}
is the subject of investigation of the present paper. 

Sufficient conditions for the SMsR property are usuallly formullated as strong positive definitness (coercivity) 
of the second derivative
of the objective functional with respect to feasible control variations (or on the so-called critical cone) with respect to the 
$L^2$-norm of the controls. Conditions of this type are also sufficient for optimality.
In the paper, we present several sufficient conditions for SMsR of the optimality mapping of problem \reff{ocp1}--\reff{const},
combining in a {\em unified} way strong and weak coercivity requirements relative to the $L^1$-norm. 
Due to the affine structure of the problem with respect to the control, the conditions involve simultaneously 
the first and the second derivative 
of the objective functional. The importance of including the first derivative in the coercivity condition in $L^1$ 
is known from the existing works on ODE affine optimal control problems (see e.g. \cite{OSVE2}): . Moreover, 
the coercivity condition involves not only a quadratic function of the $L^1$-norm of the 
control variation; instead it involves  a more general homogeneous function of second order jointly depending on the control 
and the corresponding state variation, therefore we call it ``unified''.

The sufficient conditions for SMsR are proved in the paper to imply sufficiency of the first order optimality condition
(the Pontryagin principle). Moreover, these conditions are then equivalently reformulated in terms of several 
``critical cones'' that appear in the literature (see e.g. \cite{Casas-Mateos2020}), showing the generality of the former.

In the recently submitted companion paper \cite{CJV2022b}, we consider a similar problem 
where the control, $u(x,t)$, depends on the spacial position $x$ and the time.  In the present paper, following \cite{CM2021},
the control function $u(t)$ depends only on the time, and each control component $u_j(t)$ has a fixed spacial 
distribution given by the function $g_j(x)$, $j = 1, \ldots, m$. For reader's convenience, 
here we repeat several auxiliary results from \cite{CJV2022b} in a slightly modified form. 
The main results---the strong subregularity theorems in Section \ref{SSR}---are also similar to the ones in \cite{CJV2022b}. 
However, there are important differences: (i) the objective functional is more general (in the companion paper, it is essential that 
the function $L_1$ in the objective functional is affine in $y$ or even independent of $y$ in some of the results); 
(ii) the hierarchy of the sufficient conditions for optimality and subregularity introduced in Section \ref{S_SC} 
is similar to that in the elliptic case. However, this hierarhy is not true for parabolic problems  with controls 
depending on space and time;
(iii) in contrast to the present paper, several of the results
about SMsR in \cite{CJV2022b} have the weaker form of H\"older SMsR. 
We refer to the companion paper \cite{CJV2022b} for comprehensive 
discussions about the relationship between conditions for SMsR, second order sufficient optimality conditions, 
and stability analysis of optimal control problems for elliptic and parabolic equations, which we do not repeat here.

The optimal control problem considered in this paper resembles the one in \cite{CMR}\footnote{
We are thankful to Eduardo Casas, who brought to our attention the problem with control depending only on time.}. 
First order Pontryagin type necessary optimality conditions, as well as second order sufficient optimality 
conditions for strong local minimum, are established in this paper. 
In the present paper, we build upon a priori estimates for the linearized 
states established in \cite{CMR} and study metric subregularity of the optimality mapping, hence also 
stability of the solution. 

\bino
The paper is organized as follows.  Section \ref{Sprel} presents notations, assumptions and known facts about 
semilinear parabolic equations. Preliminary results about the optimal control problem \reff{ocp1}--\reff{const} are given 
in Section \ref{SProblem}. The unified conditions for SMsR are introduced in Section \ref{S_SC} and their sufficiency for optimality 
is discussed. Section \ref{S_SMsR} presents the main results -- two theorems claiming that SMsR property of the
optimality mapping holds under several sets of conditions. Some technical auxiliary results and proofs are given in Appendix.  


\section{Notations, assumptions, and known facts} \label{Sprel}

We begin with some notations and definitions.
Given a non-empty, bounded and Lebesgue measurable set $\Omega\subset \mathbb R^n$, 
we denote by $L^p(\Omega)$, $1\leq p\leq\infty$, the Banach spaces of all measurable 
functions $ \Omega \to  \mathbb R$ for which the usual norm $\| \cdot\|_{L^p(\Omega)}$ is finite.
For a bounded Lipschitz domain $\Omega \subset \mathbb R^n$ (that is, a set with Lipschitz boundary), 
the Sobolev space $H^1_0(\Omega)$ consists of all functions $\Omega \to \R$ 
that have weak first order derivatives in $L^2(\Omega)$ and vanish on the boundary of $\Omega$ (in the trace sense).
The space $H_0^1(\Omega)$ is equipped with its usual norm denoted by $\|\cdot\|_{H^1_0(\Omega)}$. 
By $ H^{-1}(\Omega)$ we denote the topological dual of $H_0^1(\Omega)$,
equipped with the standard norm $\|\cdot\|_{H^{-1}(\Omega)}$.
Given a real Banach space $Z$, the space $L^p(0,T\text{; } Z)$ consist of all strongly measurable functions 
$y:[0,T]\to Z$ that satisfy 
\bda
		\|y\|   _{L^p(0,T\text{; }Z)} &:=& \Big(\int_0^T \|   y(t) \|_{Z}^p\dt\Big)^{\frac{1}{p}}<\infty
                      \qquad \mbox{if } \; 1\leq p<\infty, \\
		\|y\|_{L^\infty(0,T\text{; }Z)} &:=& \text{inf}\{M\in\mathbb{R}\ \vert\ \|y(t)\|_{Z}\leq M \;\text{ for a.e. } t\in (0,T)\}<\infty.
\eda
The Hilbert space $W(0,T)$ consists of all functions in $L^2(0,T\text{; }H^1_0(\Omega))$ that have a distributional derivative 
in $L^2(0,T\text{; }H^{-1}(\Omega))$, that is
	\[
		W(0,T):=\Bigg\{ y\in L^2(0,T;H^1_0(\Omega))\Big\vert  \ \frac{\partial y}{\partial t}\in L^2(0,T;H^{-1}(\Omega)) \Bigg\},
	\]
and is endowed with the norm
	\[
		\|   y\|   _{W(0,T)}:=\|    y\|   _{L^2(0,T;H^1_0(\Omega))}+\|    \partial y/\partial t \|   _{L^2(0,T;H^{-1}(\Omega))}.
	\]
The Banach space $C([0,T]\text{; }L^2(\Omega))$ consists of all continuous functions $y:[0,T]\to L^2(\Omega)$ 
and is equipped with the norm $\max_{t\in [0,T]}\|y(t)\| _{L^2(\Omega)}$. It is well known that $W(0,T)$ is continuously 
embedded in $C([0,T]\text{; }L^2(\Omega))$ and compactly embedded in $L^2(Q)$. We use the notation
$\langle \cdot, \cdot \rangle_X$ for the duality pairing between a Banach space $X$ and its dual.

\bino
The following assumptions, close to those in  
\cite{CDJ2022,Casas-Mateos2020,CM2021,CMR,CT16,CT22,CWW,CWW2018}
are standing in all the paper.

\begin{Assumption} \label{exist.1}
The operator $\mathcal A :H^1_0(\Omega)\to H^{-1}(\Omega)$, is given by
		\[
			\mathcal A =-\sum_{i,j=1}^{n}\partial_{x_j}(a_{i,j}(x)\partial_{x_i} ),
		\]
where $a_{i,j}\in L^{\infty}(\Omega)$ satisfy the uniform ellipticity condition
		\[
    \exists \lambda_{\mathcal{A}}>0:\ \lambda_{\mathcal{A}}\vert\xi\vert^2\leq \sum_{i,j=1}^{n} a_{i,j}(x)\xi_{i}\xi_{j} \;\;
               \forall \xi\in \mathbb{R}^n \;\; 	 \textrm{and a.a.}\ x\in \Omega.
		\]
The matrix with components $a_{i,j}$ is denoted by $A$. 
\end{Assumption}     


\begin{Assumption}  \label{exist.2}
For every $y \in \mathbb R$, the functions $f(\cdot,\cdot,y) \in L^{r}(Q)$,  
$L_0(\cdot,\cdot,y) \in L^{1}(Q)$, $L_{1,j}(\cdot,\cdot,y) \in L^{1}(Q)$ $1\leq j\leq m$,  and $y_0 \in L^{\infty}(\Omega)$, where $r$ is a real number satisfying the inequality
\be \label{Ern}
        r> \max \Big\{2,1+\frac{n}{2} \Big\}.      
\ee
For a.e. $(x,t) \in Q$ the first and the second derivatives of $f$,$L_0$ and $L_{1,j}$ with respect to $y$ exist and are locally 
bounded and locally Lipschitz continuous, uniformly with respect to $(x,t) \in Q$. Moreover, 
$ \frac{\partial f}{\partial y}(x,t,y) \geq 0$ for a.e. $(x,t) \in Q$ and for all $y\in \mathbb R$.
\end{Assumption}  



\bino
Next, for the reader's convenience, we remind some facts about linear and semilinear parabolic equations. 

By definition, the function $y$ is a weak solution of  the semilinear parabolic initial-boundary value problem \eqref{see1}
if $y \in W(0,T)$ with $y(\cdot,0)=0$, and 
\begin{align}   \label{lin.1}
		\int_{0}^{T}\llangle \frac{\partial y}{\partial t}+ \mathcal A y, \psi \rrangle_{H^1_0}\,dt
 =-\int_{0}^{T}  \langle f(\cdot,y),\psi\rangle_{L^2(\Omega)}\,\dd t + \int_{0}^{T} \langle h, \psi\rangle_{L^2(\Omega)} \dd t
\end{align}
for all $\psi\in L^2(0,T, H^1_0(\Omega))$, where $h(x,t) :=  \langle g(x), u(t)\rangle$.

A proof of the next theorem can be found in \cite[Theorem 2.1]{CM2021}.
\begin{Theorem} \label{estsemeq}
For any $u\in L^2(0,T)^m$ the initial-boundary value problem \eqref{see1} has a unique weak solution 
$y_u\in W(0,T)$. If $u\in L^r(0,T)^m$ (see \reff{Ern}) then $y_u\in W(0,T)\cap L^\infty(Q)$. 
Moreover, there exists a positive constant $D_{r}$, independent of $u$, $g$, $f$ and $y_0$, such that 
\begin{equation}
	\|y_u \|    _{L^2(0,T;H^1_0(\Omega))}+\|y_u \|   _{L^\infty(Q)} \leq D_{r} \big(\|\langle u, g\rangle \|   _{L^r(Q)}+\|   f(\cdot,\cdot,0) \| _{L^r(Q)}
          +\|    y_0\|   _{L^\infty(\Omega \big)}).	
	\label{semilin}
\end{equation}
Finally, if $u_k\rightharpoonup u$ weakly in $L^{r}(Q)$, then 
\begin{equation}
	\|y_{u_k}-y_u\|_{L^{\infty}(Q)}+\|y_{u_k}-y_u\|_{L^2(0,T;H^1_0(\Omega))}\to 0.
\label{semilinweak}
\end{equation}
\end{Theorem}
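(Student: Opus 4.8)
The plan is to build the solution in stages, isolating the linear theory, the semilinear existence and bounds, and finally the stability under weak convergence of the controls. I would first record the linear facts: for right-hand side $h\in L^2(0,T;H^{-1}(\Omega))$ and $y_0\in L^2(\Omega)$, the linear problem $\partial_t y+\A y=h$, $y(0)=y_0$, has a unique weak solution in $W(0,T)$ by the Lions--Galerkin theorem together with the standard energy estimate based on the ellipticity constant $\lambda_{\A}$. When the data are more regular, namely $h\in L^r(Q)$ with $r>1+n/2$ and $y_0\in L^\infty(\Omega)$, I would invoke Stampacchia's truncation method to upgrade this to an $L^\infty(Q)$ bound of the form \reff{semilin}; this is precisely where the exponent restriction \reff{Ern} is used, and it produces a constant depending only on $\lambda_{\A}$, $T$, $\Omega$ and $r$.

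For the semilinear problem I would first dispose of uniqueness, which is immediate from the monotonicity assumption $\partial f/\partial y\ge 0$: subtracting the weak formulations of two solutions $y_1,y_2$ and testing with $y_1-y_2$, the reaction term contributes $\int_Q\big(f(\cdot,y_1)-f(\cdot,y_2)\big)(y_1-y_2)\ge 0$, while the elliptic term is coercive and the initial data cancel, so a Gronwall argument forces $y_1=y_2$. For existence with merely $L^2$ data I would rely on the theory of evolution equations governed by maximal monotone operators: the convexity of the potential $F$ with $\partial_y F=f$ (guaranteed by $\partial f/\partial y\ge0$) makes the spatial operator a subdifferential, which yields a unique solution in $W(0,T)$ irrespective of the growth of $f$. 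The additional $L^\infty(Q)$ regularity under $u\in L^r(0,T)^m$ then comes from truncating $f$ to a globally Lipschitz $f_M$, solving the truncated problem by a Galerkin scheme (passing to the limit in the nonlinearity via the strong $L^2(Q)$ convergence provided by the compact embedding of $W(0,T)$), and deriving an $L^\infty$ bound independent of $M$ from the linear Stampacchia estimate after absorbing the nonnegative reaction term (writing $f(\cdot,y)=\big(f(\cdot,y)-f(\cdot,0)\big)+f(\cdot,0)$ so that the monotone part has the right sign, which also explains the term $\|f(\cdot,\cdot,0)\|_{L^r(Q)}$ in \reff{semilin}). Choosing $M$ larger than this bound shows the truncated solution solves the original equation.

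For the stability claim \reff{semilinweak}, suppose $u_k\rightharpoonup u$ weakly in $L^r(Q)$; since $g\in L^\infty(\Omega)^m$ we also have $\langle g,u_k\rangle\rightharpoonup\langle g,u\rangle$ weakly in $L^r(Q)$, and by \reff{semilin} the family $\{y_{u_k}\}$ is bounded in $W(0,T)\cap L^\infty(Q)$. The compact embedding $W(0,T)\hookrightarrow L^2(Q)$ (Aubin--Lions) then gives, along a subsequence, strong convergence in $L^2(Q)$ and a.e.; the uniform $L^\infty$ bound and the continuity of $f$ promote this, by dominated convergence, to $f(\cdot,y_{u_k})\to f(\cdot,y_u)$ in $L^2(Q)$, so the limit solves the equation for $u$ and, by uniqueness, the whole sequence converges. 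To obtain the strong $H^1_0$ convergence I would test the equation for $w_k:=y_{u_k}-y_u$ with $w_k$ itself: writing the reaction difference as $c_k w_k$ with $c_k:=\int_0^1\partial_y f(\cdot,y_u+s w_k)\,ds\ge 0$ bounded, the zeroth-order term has the favorable sign and
\[
\tfrac12\|w_k(T)\|_{L^2(\Omega)}^2+\lambda_{\A}\int_0^T\|w_k\|_{H^1_0(\Omega)}^2\,\dt\le\int_Q \langle g,u_k-u\rangle\, w_k\,\dx\dt,
\]
whose right-hand side tends to $0$ as a weak-times-strong product.

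The remaining $L^\infty$ convergence is the most delicate point and, in my view, the main obstacle. Here I would use parabolic regularity: $w_k$ solves a linear parabolic equation with bounded nonnegative zeroth-order coefficient $c_k$ and right-hand side $\langle g,u_k-u\rangle$ bounded in $L^r(Q)$, so the De Giorgi--Nash--Moser / Ladyzhenskaya estimates (again requiring $r>1+n/2$) bound $\{w_k\}$ uniformly in some H\"older space $C^{\alpha}(\overline Q)$. By the Arzel\`a--Ascoli theorem this family is precompact in $C(\overline Q)$, and since $w_k\to 0$ already in $L^2(Q)$, every uniform limit point must vanish; hence $\|w_k\|_{L^\infty(Q)}\to 0$. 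The difficulty is concentrated in the two places where the strict inequality \reff{Ern} is exploited, namely the $L^\infty$ a priori bound and this uniform H\"older estimate, since weak $L^r$ convergence of the controls does not by itself control $\langle g,u_k-u\rangle$ in any strong norm; it is the compactness gained from parabolic smoothing that converts weak convergence of the data into strong convergence of the states.
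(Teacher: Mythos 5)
Your overall architecture coincides with the standard proof behind this theorem (the paper itself gives no argument, deferring to \cite[Theorem 2.1]{CM2021}): truncation of $f$ combined with a Stampacchia/Ladyzhenskaya--Solonnikov--Ural'tseva $L^\infty$ bound, with the splitting $f(\cdot,\cdot,y)=\big(f(\cdot,\cdot,y)-f(\cdot,\cdot,0)\big)+f(\cdot,\cdot,0)$ correctly accounting for the term $\|f(\cdot,\cdot,0)\|_{L^r(Q)}$ in \eqref{semilin}; monotonicity of $f$ for uniqueness; and, for \eqref{semilinweak}, Aubin--Lions compactness plus the energy identity for the $L^2(0,T;H^1_0(\Omega))$ part, and uniform parabolic H\"older estimates plus Arzel\`a--Ascoli for the $L^\infty(Q)$ part. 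These components are sound, and you correctly identified both places where \eqref{Ern} enters and the key point that it is parabolic compactness, not continuity of the data-to-state map, that converts weak convergence of the controls into uniform convergence of the states.

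The genuine gap is your existence argument for bare $u\in L^2(0,T)^m$. The appeal to subdifferential/maximal-monotone theory fails as stated: Assumption 1 does not require $a_{ij}=a_{ji}$, so $\A$ is in general nonsymmetric and $\A y+f(x,t,y)$ is not the subdifferential of any convex potential; even granting symmetry, $f$ depends on $t$, so you would need the time-dependent convex-potential theory with hypotheses you have not verified; and, most seriously, with arbitrary growth of $f$ and a state that on your account may be unbounded at this stage, it is unclear that $f(\cdot,\cdot,y_u)\in L^2(Q)$, which the weak formulation \eqref{lin.1} presupposes -- so the object produced by abstract monotone-operator theory need not be a weak solution in the paper's sense. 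The repair uses the structure you never exploit: the right-hand side is $h(x,t)=\langle g(x),u(t)\rangle$ with $g_j\in L^\infty(\Omega)$, hence $h\in L^2(0,T;L^\infty(\Omega))$, and the mixed-norm boundedness criterion for parabolic equations (spatial exponent $q=\infty$, temporal exponent $p=2$, so $1/p+n/(2q)=1/2<1$) applies. Therefore your own truncation-plus-Stampacchia scheme already yields a uniform $L^\infty(Q)$ bound in terms of $\|u\|_{L^2(0,T)^m}\max_j\|g_j\|_{L^\infty(\Omega)}$ and $\|y_0\|_{L^\infty(\Omega)}$, giving existence and uniqueness in $W(0,T)$ (indeed in $W(0,T)\cap L^\infty(Q)$) for every $u\in L^2(0,T)^m$, with no monotone-operator machinery. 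The $L^r$ hypothesis is then needed only to obtain the estimate in the specific form \eqref{semilin}, i.e., in terms of $\|\langle u,g\rangle\|_{L^r(Q)}$ viewed as a general $L^r(Q)$ right-hand side.
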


Below we remind some results concerning the linearized version of \eqref{see1} and its adjoint equation.

We consider weak solutions (in the same sense as above)
of the following linear parabolic equation:
\be \label{Elin}
		\left\{ \begin{array}{l}
		\frac{\partial y}{\partial t}+\mathcal Ay+\alpha y =h  \quad \text{ in }\ Q,\\
		y=0 \text{ on } \Sigma,\quad y(\cdot,0) =  y_0\quad \text{ on } \Omega.\\
	\end{array} \right.
\ee 

\begin{Lemma} \label{mainex}
Let $0\leq \alpha\in L^\infty(Q)$ be given.
\begin{enumerate}
\item 
For each $h \in L^2(Q)$ equation \eqref{Elin} has a unique weak solution $y_{h}\in W(0,T)$. 
Moreover, there exists a constant $ C_2 >0$  independent of $h$ and $\alpha$ such that
\be \label{wl2}
		\|y_h\| _{L^2(0,T,H^1_0(\Omega))}\leq  C_2  \|h\| _{L^{2}(Q)}.
\ee
\item 
If, additionally, $h \in L^{r}(Q)$ (we remind \reff{Ern}) and $y_0 \in C(\bar Q)$, then the weak solution $y_h$ of \eqref{Elin} 
belongs to $W(0,T)\cap L^\infty(Q)$. Moreover, there exists a constant $C_r>0$ independent of $h$ and  $\alpha$ such that
\be \label{clr} 
		\|y_h\|_{L^2(0,T,H^1_0(\Omega))}+\|y_h\|_{L^\infty(Q)}\leq C_r \| h \| _{L^{r}(Q)}.
\ee
\end{enumerate}
\end{Lemma}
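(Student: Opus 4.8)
The plan is to treat \eqref{Elin} as a standard linear parabolic problem and to handle the two assertions separately. For the first part, existence and uniqueness of a weak solution $y_h\in W(0,T)$ follow from the classical Lions--Galerkin theory for abstract parabolic equations: the time-dependent bilinear form $a(t;y,\psi):=\langle \mathcal A y,\psi\rangle_{H^1_0}+\int_\Omega \alpha(\cdot,t)\,y\,\psi\dx$ is bounded on $H^1_0(\Omega)\times H^1_0(\Omega)$ (as $a_{i,j},\alpha\in L^\infty$) and coercive, since the ellipticity of Assumption \ref{exist.1} gives $\langle \mathcal A y,y\rangle_{H^1_0}\ge \lambda_{\mathcal A}\|\nabla y\|_{L^2(\Omega)}^2$ while the zeroth-order term $\int_\Omega\alpha\,y^2\dx\ge 0$ because $\alpha\ge 0$; Poincaré's inequality upgrades this to coercivity in the full $H^1_0$-norm. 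Uniqueness is immediate from linearity together with the energy identity used below.

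For the bound \eqref{wl2} I would use the energy method. Testing \eqref{Elin} with $\psi=y_h$ --- justified for $y_h\in W(0,T)$ by density of smooth functions and the integration-by-parts identity $\int_0^T\langle \partial_t y_h,y_h\rangle_{H^1_0}\dt=\tfrac12\|y_h(T)\|_{L^2(\Omega)}^2-\tfrac12\|y_h(0)\|_{L^2(\Omega)}^2$ --- yields
\[
\tfrac12\|y_h(T)\|_{L^2(\Omega)}^2+\lambda_{\mathcal A}\|\nabla y_h\|_{L^2(Q)}^2+\int_Q\alpha\,y_h^2 \;\le\; \int_Q h\,y_h .
\]
Under the convention $y_h(\cdot,0)=0$ of the weak-solution definition the initial term vanishes; discarding the two nonnegative terms on the left, bounding the right-hand side by Cauchy--Schwarz and Poincaré as $\int_Q h\,y_h\le \|h\|_{L^2(Q)}\,C_P\|\nabla y_h\|_{L^2(Q)}$, and absorbing the gradient factor via Young's inequality gives \eqref{wl2} with a constant $C_2$ depending only on $\lambda_{\mathcal A}$ and the Poincaré constant of $\Omega$. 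The sign condition $\alpha\ge 0$ is exactly what makes $C_2$ independent of $\alpha$, since the zeroth-order term is simply dropped.

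The second part carries the real work. One route is to view \eqref{Elin} as a semilinear equation of the form \eqref{see1} with nonlinearity $f(x,t,y)=\alpha(x,t)\,y$ --- which meets Assumption \ref{exist.2}, in particular $\partial_y f=\alpha\ge 0$ --- and right-hand side $h$ in place of $\langle g,u\rangle$; the boundedness part of Theorem \ref{estsemeq} (whose argument applies to a general $L^r$ right-hand side) then delivers $y_h\in L^\infty(Q)$ and the bound \eqref{clr}, with $C_r=D_r$ independent of $\alpha$ precisely because $D_r$ is independent of $f$. A self-contained alternative is Stampacchia's truncation (De Giorgi) method: for a level $k\ge 0$ one tests \eqref{Elin} with $(y_h-k)^+$, uses the energy inequality on the super-level set $\{y_h>k\}$ --- where the $\alpha$-term again has a favourable sign and is discarded --- and derives a nonlinear recursion for the measures of the super-level sets. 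The exponent $1+n/2$ is exactly the classical Aronson--Serrin/Stampacchia threshold that forces this recursion to converge geometrically, yielding a finite level beyond which $\{y_h>k\}$ is null; the symmetric argument bounds $-y_h$. Either way \eqref{clr} follows with $C_r$ independent of $\alpha$.

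The main obstacle is the sharp $L^\infty$ bound of the second part: the energy estimate alone cannot give it, and the truncation iteration must be run carefully, tracking how $\|h\|_{L^r(Q)}$ controls the decay of the super-level sets and checking that its parameters satisfy the fast-geometric-convergence condition guaranteed exactly by $r>1+n/2$. The remaining ingredients --- existence via Lions' theorem and the energy estimate --- are routine; the only further point needing care is the rigorous justification of testing with $y_h$ and with $(y_h-k)^+$, which rests on the chain rule in $W(0,T)$ and the density of $C^\infty$ therein.
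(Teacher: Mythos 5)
Your proof is correct, but it is worth noting that the paper itself does not prove this lemma at all: it disposes of it by citation, pointing to \cite[Theorems 3.13 and 5.5]{Troltzsch2010} for existence, uniqueness and the two estimates, and to \cite{CDJ2022} (elliptic case) and \cite{CJV2022b} (parabolic case) for the only nonstandard point, namely that $C_2$ and $C_r$ do not depend on $\alpha$. What you have written is essentially a reconstruction of the arguments contained in those references: the Lions--Galerkin existence theory, the energy estimate in which the nonnegative term $\int_Q \alpha y_h^2$ is simply discarded (which is indeed exactly the mechanism making $C_2$ independent of $\alpha$), and the De Giorgi--Stampacchia truncation with the parabolic threshold $r>1+\frac{n}{2}$ from \eqref{Ern}, where the sign of $\alpha$ again lets the zeroth-order term be dropped on the super-level sets, so $C_r$ is likewise $\alpha$-free. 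Two small points deserve care. First, your alternative route to \eqref{clr} via Theorem \ref{estsemeq} with $f(x,t,y)=\alpha(x,t)y$ is legitimate only because that theorem's constant $D_r$ is asserted to be independent of $f$ and because $f(\cdot,\cdot,0)=0$ kills the corresponding term in \eqref{semilin}; note also that Theorem \ref{estsemeq} is stated for right-hand sides of the special form $\langle g,u\rangle$, so one must (as you do) appeal to the fact that the underlying result \cite[Theorem 2.1]{CM2021} covers a general $L^r$ right-hand side --- the truncation argument is the safer self-contained option. Second, the estimates \eqref{wl2} and \eqref{clr} as stated can only hold with $y_0=0$ (otherwise $h=0$ with $y_0\neq 0$ gives a nonzero solution); you correctly flag this and invoke the paper's convention, which is the right reading of the lemma.
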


All claims of the lemma are well known,
see \cite[Theorem 3.13, Theorem 5.5]{Troltzsch2010} for the first statements of the two items; for a proof of the independence 
of the constants $ C_2 $ and $C_r$ on $\alpha$ see \cite{CDJ2022} for a linear elliptic PDE of non-monotone type, 
and \cite{CJV2022b} for the parabolic setting.

The differentiability of the control-to-state operator under Assumptions \ref{exist.1} and \ref{exist.2} is well known, 
see \cite[Theorem 2.4]{CMR}.

\begin{Theorem} \label{derivative}
Let $q>1$. The control-to-state operator $\mathcal G: L^q(0,T)^m\to W(0,T)\cap L^{\infty}(Q)$, given by $\mathcal G(u):=y_u$, is of class $C^2$ and for every $u,v,w\in L^q(0,T)$, it holds that 
$z_{u,v}:=\mathcal G'(u)v$ is the solution of 
\begin{align}
	\left\{\begin{array}{l}
		\frac{d z}{dt}+\mathcal Az+f_y(x,t,y_u) z =\langle g,v\rangle \  \text{ in }\ Q,
		\\ z=0 \ \text{ on }\ \Sigma,\ z(\cdot,0)=0\ \text{ on } \Omega,
	\end{array} \right.
	\label{stddt}
\end{align}
and $\omega_{u,(v,w)}:= \mathcal G''(u)(v,w)$ is the solution of
\begin{align}
	\left\{\begin{array}{l}
	\frac{d \omega}{dt}+\mathcal A \omega+f_y(x,t,y_u) \omega = -f_{yy}(x,t,y_u) z_{u,v } z_{u,w} \  \text{ in }\ Q,
	\\ \omega =0 \ \text{ on }\ \Sigma,\ \omega(\cdot,0)=0\ \text{ on } \Omega.
	\end{array} \right.
	\label{stdddt}
\end{align}
\end{Theorem}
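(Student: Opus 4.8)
The plan is to derive both formulas at once from the Implicit Function Theorem (IFT) applied to the map encoding the state equation \eqref{see1}. Put $Y:=W(0,T)\cap L^\infty(Q)$ and consider
\[
F(u,y):=\frac{\partial y}{\partial t}+\mathcal A y+f(\cdot,\cdot,y)-\langle g,u\rangle,
\]
regarded as a map from $L^q(0,T)^m\times Y$ into a suitable source space $Z$, with the homogeneous boundary condition and the fixed initial trace $y(\cdot,0)=y_0$ built into the affine class of admissible states (equivalently, after subtracting a fixed function carrying the initial datum, one works on the linear subspace with zero initial and boundary data). By Theorem \ref{estsemeq} the equation $F(u,y)=0$ has, for each $u$, a unique solution $y_u\in Y$, so $\mathcal G(u)=y_u$ is exactly the implicitly defined branch; it then remains to check that $F$ is twice continuously differentiable and that $\partial_y F(u,y_u)$ is a Banach-space isomorphism onto $Z$.

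For the smoothness of $F$, the affine-linear terms $\partial_t y+\mathcal A y$ and $\langle g,u\rangle$ are bounded linear in $(u,y)$ (here $g_j\in L^\infty(\Omega)$ is used), so the whole issue reduces to the superposition operator $N_f\colon y\mapsto f(\cdot,\cdot,y)$. I would show that $N_f$ is of class $C^2$ from $L^\infty(Q)$ into $L^r(Q)$, with $N_f'(y)z=f_y(\cdot,\cdot,y)\,z$ and $N_f''(y)(z_1,z_2)=f_{yy}(\cdot,\cdot,y)\,z_1z_2$. The point is that, working in $L^\infty(Q)$, the state argument stays in a fixed ball, on which Assumption \ref{exist.2} makes $f_y$ and $f_{yy}$ bounded and Lipschitz uniformly in $(x,t)$; the usual Taylor-remainder estimate for Nemytskii operators then yields Fréchet differentiability in operator norm and continuity of $N_f''$.

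Next I would verify invertibility of the $y$-derivative. At $(u,y_u)$ one has
\[
\partial_y F(u,y_u)\,z=\frac{\partial z}{\partial t}+\mathcal A z+f_y(\cdot,\cdot,y_u)\,z .
\]
Since $y_u\in L^\infty(Q)$, the coefficient $f_y(\cdot,\cdot,y_u)$ lies in $L^\infty(Q)$, and by Assumption \ref{exist.2} it is nonnegative, so Lemma \ref{mainex} applies with $\alpha=f_y(\cdot,\cdot,y_u)$ and shows that $\partial_y F(u,y_u)$ is a bounded linear bijection onto $Z$, hence an isomorphism. The IFT then yields $\mathcal G\in C^2$, and both formulas follow by differentiating the identity $F(u,\mathcal G(u))=0$: because $\partial_u F(u,y)v=-\langle g,v\rangle$ does not depend on $y$, the first-order relation $\partial_u F+\partial_y F\,\mathcal G'=0$ says precisely that $z_{u,v}=\mathcal G'(u)v$ solves \eqref{stddt}; differentiating a second time and using $\partial_{uu}F=\partial_{uy}F=0$ together with $\partial_{yy}F(u,y_u)(z_1,z_2)=f_{yy}(\cdot,\cdot,y_u)z_1z_2$ gives $\partial_y F(u,y_u)\,\mathcal G''(u)(v,w)=-f_{yy}(\cdot,\cdot,y_u)\,z_{u,v}z_{u,w}$, which is \eqref{stdddt}.

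I expect the main obstacle to be the correct choice of the source space $Z$ so that $F$ takes values in it while $\partial_y F$ stays onto: with controls only in $L^q(0,T)^m$, $q>1$, the term $\langle g,u\rangle$ need not lie in $L^r(Q)$, and the claimed $L^\infty(Q)$-regularity of the states and of the derivatives does not follow from the generic $L^r$-source estimate in the second item of Lemma \ref{mainex}. This is exactly where the separable structure of the source $\langle g(x),u(t)\rangle$ must be exploited: through Duhamel's formula and the $L^\infty$-contractivity of the parabolic semigroup one bounds $\|z_{u,v}\|_{L^\infty(Q)}$ by $\|v\|_{L^q(0,T)^m}$ for every $q>1$, which is the a priori estimate taken from \cite{CMR} that makes the IFT argument close in $Y$.
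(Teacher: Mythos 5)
Your proposal is correct in substance and follows the same route as the literature the paper relies on: the paper does not prove Theorem~\ref{derivative} itself but quotes it from \cite[Theorem 2.4]{CMR}, and the proof there is precisely the implicit function theorem scheme you describe (state equation recast as $F(u,y)=0$, $C^2$ Nemytskii operator on $L^\infty(Q)$, invertibility of the linearization via the linear theory with $\alpha=f_y(\cdot,\cdot,y_u)\ge 0$, and both formulas obtained by differentiating the identity $F(u,\mathcal G(u))=0$, using $\partial_{uu}F=\partial_{uy}F=0$). You also correctly identified the only non-routine ingredient for $q>1$ small, namely that the separated structure $\langle g(x),u(t)\rangle$ must be exploited to get $L^\infty$-bounds that the generic $L^r(Q)$-source theory of Lemma~\ref{mainex} does not provide.

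One point needs tightening before the IFT can literally be applied. With $Y=W(0,T)\cap L^\infty(Q)$ as you set it up, $F$ does not map into any source space $Z$ on which $\partial_yF(u,y_u)$ is onto: for a generic $y\in Y$ one only has $\partial_t y+\mathcal A y\in L^2(0,T;H^{-1}(\Omega))$, and on that space the linearized solution operator does not land in $L^\infty(Q)$, while choosing $Z=L^r(Q)$ excludes both $F(u,y)$ for generic $y$ and the sources $\langle g,v\rangle$ with $v\in L^q(0,T)^m$, $q<r$. The standard repair, which the paper itself uses in Section~\ref{SSR} through the set $D(\mathcal L)$ in \eqref{EDL}, is to take $Z:=L^r(Q)+\{\langle g,w\rangle: w\in L^q(0,T)^m\}$ (with the natural infimum norm) and to equip the state space with the graph norm, i.e.\ work on $\{y\in W(0,T)\cap L^\infty(Q):\ (\partial_t+\mathcal A)y\in Z,\ y(\cdot,0)=y_0\}$; then $\partial_yF(u,y_u)$ is an isomorphism, by Lemma~\ref{mainex} for the $L^r(Q)$ part and by your Duhamel-type bound for the separated part. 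For the latter, note that the coefficient $f_y(x,t,y_u)$ is time-dependent, so ``$L^\infty$-contractivity of the semigroup'' should be replaced by a comparison/weak maximum principle argument (compare $z$ with $w(t)=\max_j\|g_j\|_{L^\infty(\Omega)}\int_0^t|v(s)|\,\mathrm{d}s$, using $f_y\ge 0$), or by the evolution-family analogue; the paper's Lemma~\ref{l2l1time} records the corresponding $L^\infty(0,T;L^2(\Omega))$ estimate. Finally, be aware that for $1<q<2$ the distributional derivative of $z_{u,v}$ lies only in $L^q(0,T;H^{-1}(\Omega))$, so the $W(0,T)$-regularity in the target is delicate for such $q$; this caveat is inherent to the statement as quoted from \cite{CMR} and not a defect of your argument.
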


\begin{Lemma} (\cite[Lemma 2.5]{CMR})\label{l2l1time}
Let $\alpha_0\in L^\infty(Q)$, $u\in L^1(0,T)^m$, and let $z\in L^\infty(0,T;L^2(\Omega))\cap L^2(0,T;H^1_0(\Omega))$ 
be the solutions of
\begin{equation}
		\left\{ \begin{array}{lll}
		\frac{\partial y}{\partial t}+\mathcal Ay+\alpha_0 y& =\langle g(x),u(t)\rangle  &\text{ in }\ Q,\\
		y=0 \text{ on } \Sigma,\quad y(\cdot,0)& =  y_0\ &\text{ on } \Omega.\\
	\end{array} \right.
\end{equation}
	Then, the following inequality holds:
\begin{equation}\label{l2l1timeest}
		\|z \|_{L^\infty(0,T;L^2(\Omega))}\leq 2\exp (\| \alpha_0\|_{L^\infty(Q)}) 
           \max _{1\leq j \leq m}\|g_j\|_{L^2(\Omega)} \|v\|_{L^1(0,T)^m}.
\end{equation}
\end{Lemma}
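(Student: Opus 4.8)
The plan is to prove the bound by a direct energy argument, testing the equation with its own solution and reducing the whole estimate to a scalar differential inequality for $t\mapsto\|z(t)\|_{L^2(\Omega)}$. Since $g_j\in L^\infty(\Omega)\subset L^2(\Omega)$ and $u\in L^1(0,T)^m$, the source $\langle g(\cdot),u(t)\rangle$ belongs to $L^1(0,T;L^2(\Omega))$, and with the (zero) initial datum relevant to the linearized equation \eqref{stddt} the solution $z$ lies in $L^\infty(0,T;L^2(\Omega))\cap L^2(0,T;H^1_0(\Omega))$; the formal computations below are justified by first taking $u$ smooth (or a Galerkin approximation) and passing to the limit. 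Testing the equation with $z(t)$ and integrating over $\Omega$ gives
\[
\frac{1}{2}\frac{d}{dt}\|z(t)\|_{L^2(\Omega)}^2+\langle\mathcal A z(t),z(t)\rangle+\int_\Omega\alpha_0\,z(t)^2\dx=\int_\Omega\langle g,u(t)\rangle\,z(t)\dx.
\]
By the uniform ellipticity of Assumption~\ref{exist.1} the term $\langle\mathcal A z,z\rangle$ is nonnegative and may be dropped, while the zeroth-order term is controlled from below by $-\|\alpha_0\|_{L^\infty(Q)}\|z(t)\|_{L^2(\Omega)}^2$.

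The decisive step is the estimate of the source term, which must produce the $L^1$-in-time dependence together with the factor $\max_j\|g_j\|_{L^2(\Omega)}$; this is exactly where the disjointness of the supports of the $g_j$ enters. Writing $\langle g,u(t)\rangle=\sum_{j=1}^m g_j u_j(t)$, the disjoint supports give $\|\langle g,u(t)\rangle\|_{L^2(\Omega)}^2=\sum_j u_j(t)^2\|g_j\|_{L^2(\Omega)}^2\le\big(\max_j\|g_j\|_{L^2(\Omega)}\big)^2\big(\sum_j|u_j(t)|\big)^2$, whence by Cauchy--Schwarz
\[
\Big|\int_\Omega\langle g,u(t)\rangle\,z(t)\dx\Big|\le\max_{1\le j\le m}\|g_j\|_{L^2(\Omega)}\,\|z(t)\|_{L^2(\Omega)}\sum_{j=1}^m|u_j(t)|.
\]
Setting $\psi(t):=\|z(t)\|_{L^2(\Omega)}$ and dividing the resulting quadratic inequality by $\psi(t)$ converts it into the \emph{linear} differential inequality $\psi'(t)\le\|\alpha_0\|_{L^\infty(Q)}\psi(t)+\max_j\|g_j\|_{L^2(\Omega)}\sum_j|u_j(t)|$ with $\psi(0)=0$. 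Gronwall's lemma then gives $\psi(t)\le e^{\|\alpha_0\|_{L^\infty(Q)}t}\max_j\|g_j\|_{L^2(\Omega)}\|u\|_{L^1(0,T)^m}$, and taking the supremum over $t\in[0,T]$ yields the asserted bound (the precise numerical constant and the form of the exponential being a matter of the Gronwall bookkeeping).

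The only genuine obstacle is obtaining the $L^1$- rather than the $L^2$-norm of the control: a direct application of Young's inequality to $\int_\Omega\langle g,u\rangle z$ would spend a factor of $\|z\|_{L^2}$ and leave a $\|u(t)\|^2$ source term, producing an $L^2$-in-time bound. Keeping the source linear in $\psi$ and passing to the differential inequality for $\psi$ (not $\psi^2$) is what preserves the first power of the control; the price is the division by $\psi$, which I would make rigorous by working with $\psi_\varepsilon:=(\psi^2+\varepsilon^2)^{1/2}$, deriving $\psi_\varepsilon'\le\|\alpha_0\|_{L^\infty(Q)}\psi_\varepsilon+\max_j\|g_j\|_{L^2(\Omega)}\sum_j|u_j(t)|$, and letting $\varepsilon\to0$. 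An equivalent route that avoids the division altogether is to represent $z$ through the $L^2$-contraction semigroup generated by $-\mathcal A$, treating $\alpha_0 z$ as a bounded perturbation, to obtain $\|z(t)\|_{L^2(\Omega)}\le\int_0^t e^{\|\alpha_0\|_{L^\infty(Q)}(t-s)}\|\langle g,u(s)\rangle\|_{L^2(\Omega)}\,ds$ and then insert the disjoint-support bound on $\|\langle g,u(s)\rangle\|_{L^2(\Omega)}$; both routes deliver the same estimate.
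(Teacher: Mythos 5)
Your argument is correct, and it is essentially the proof of the cited source: the paper itself gives no proof of Lemma \ref{l2l1time} (it is quoted from \cite[Lemma 2.5]{CMR}), and the proof there is exactly this energy estimate — test with $z$, drop the elliptic term by Assumption \ref{exist.1}, bound the source in $L^2(\Omega)$ by $\max_j\|g_j\|_{L^2(\Omega)}\sum_j|u_j(t)|$, and apply Gronwall to $\psi(t)=\|z(t)\|_{L^2(\Omega)}$; your semigroup/Duhamel variant is the same computation in integral form. Your reading of the (typographically garbled) statement is also the right one: the absence of a $y_0$-term in \eqref{l2l1timeest} forces $y_0=0$, and $v$ in the estimate is the $u$ of the equation.

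Two remarks. First, the disjointness of the supports is not actually needed where you invoke it: the plain triangle inequality already gives $\big\|\sum_j g_j u_j(t)\big\|_{L^2(\Omega)}\le\max_j\|g_j\|_{L^2(\Omega)}\sum_j|u_j(t)|$; disjointness only makes your squared identity exact. Second, and more substantively, dismissing the form of the exponential as ``Gronwall bookkeeping'' glosses over a genuine discrepancy with the statement as printed: Gronwall yields the factor $\exp\big(\int_0^T\|\alpha_0(\cdot,t)\|_{L^\infty(\Omega)}\dt\big)\le\exp\big(T\|\alpha_0\|_{L^\infty(Q)}\big)$, and no argument can produce the $T$-free constant $2\exp(\|\alpha_0\|_{L^\infty(Q)})$ for arbitrary $T$ when $\alpha_0$ may be negative — the scalar equation $z'=\|\alpha_0\|_{L^\infty(Q)}z+u(t)$ already exhibits growth of order $e^{\|\alpha_0\|_{L^\infty(Q)}T}$. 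The constant as transcribed in the lemma is a misquotation of \cite[Lemma 2.5]{CMR}, whose exponent is the $L^1(0,T;L^\infty(\Omega))$-norm of $\alpha_0$ — precisely what your Gronwall step delivers — so your proof establishes the correct version of the statement; it is the printed constant, not your argument, that is off. Your regularization $\psi_\varepsilon=(\psi^2+\varepsilon^2)^{1/2}$ for the division step is standard and fine (alternatively, $t\mapsto\|z(t)\|^2_{L^2(\Omega)}$ is absolutely continuous for $z\in W(0,T)$, so one may integrate the quadratic inequality directly).
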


\begin{Remark}  \label{rebound}
By the boundedness of $\mathcal U$ in $L^\infty(0,T)^m$ and Theorem \ref{estsemeq}, there exists a constant 
$M_\mathcal U > 0$ such that
\begin{equation}
	\max\{ \| u\|_{L^\infty(0,T)^m},\|y_u\| _{L^\infty(Q))}\}\leq M_{\mathcal U} \quad \forall u\in \mathcal U.
	\label{unibound}
\end{equation}
\end{Remark}

The estimates in the next lemma constitute a key ingredient to derive stability results in the later sections. 
 It extends \cite[Lemma 2.7]{CDJ2022} from elliptic equations to parabolic ones and was proved in \cite{CJV2022b}. 

\begin{Lemma}(\cite[Lemma 5]{CJV2022b})\label{sprep}
The following statements are fulfilled.
\begin{itemize}
\item[(i)] There exists a positive constant $M_2$ such that  for every $ u,\bar u \in \Uad \text{ and }v\in L^r(Q)$
\begin{align}
  \|   z_{u,v} - z_{\bar u,v}\|   _{L^2(Q)}\le M_2\|   y_u - y_{\bar u}\|   _{C(\bar Q)}\|   z_{\bar u,v}\|   _{L^2(Q)}.\label{E33}
\end{align}
\item[(ii)] Let $X=L^\infty(Q)$ or $X=L^2(Q)$. Then there exists $\varepsilon > 0$ such that for every 
$u, \bar u \in \Uad$ with\\ $\|   y_u - y_{\bar u}\|_{L^\infty(Q)} < \varepsilon$ the following inequalities are satisfied
\begin{align}
	&\|   y_u - y_{\bar u}\|_X \le 2\|   z_{\bar u,u - \bar u}\|   _X \le 3\|   y_u - y_{\bar u}\|_X,
	\label{E2.14.2}\\
	&\|   z_{\bar u,v}\|   _X \le 2 \|   z_{u,v}\|   _X \le 3\|   z_{\bar u,v}\|   _X.\label{E2.15.2}
\end{align}
\end{itemize}
\end{Lemma}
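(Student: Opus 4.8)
The plan is to derive, for each of the two items, a single linear parabolic equation solved by the relevant difference, and then to invoke the a priori bounds of Lemma~\ref{mainex}, whose constants are independent of the (nonnegative) zeroth-order coefficient. The uniform bound \reff{unibound} on the states will confine $y_u$ and $y_{\bar u}$ to a fixed compact interval, so that the local Lipschitz continuity of $f_y$ in $y$ from Assumption~\ref{exist.2} can be replaced by one constant $L_f$ valid for all admissible controls; this uniformity is exactly what makes $M_2$ and $\varepsilon$ below independent of $(u,\bar u,v)$.

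For item (i) I would subtract the two instances of the linearized equation \reff{stddt}: the difference $\delta:=z_{u,v}-z_{\bar u,v}$ solves, with homogeneous data, the equation $\partial_t\delta+\mathcal A\delta+f_y(x,t,y_u)\,\delta=-[f_y(x,t,y_u)-f_y(x,t,y_{\bar u})]\,z_{\bar u,v}$. Since $f_y(\cdot,\cdot,y_u)\ge 0$, I apply the $L^2$-estimate \reff{wl2} together with the Poincar\'e embedding $H^1_0(\Omega)\hookrightarrow L^2(\Omega)$ to bound $\|\delta\|_{L^2(Q)}$ by a fixed multiple of the $L^2(Q)$-norm of the right-hand side. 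Using \reff{unibound} and the uniform Lipschitz bound $|f_y(x,t,y_u)-f_y(x,t,y_{\bar u})|\le L_f|y_u-y_{\bar u}|\le L_f\|y_u-y_{\bar u}\|_{C(\bar Q)}$, I pull the supremum out of the integral and obtain \reff{E33} with $M_2$ depending only on $C_2$, $L_f$ and the embedding constant.

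For the first chain in item (ii), I would write the mean-value representation $f(x,t,y_u)-f(x,t,y_{\bar u})=\hat\alpha\,(y_u-y_{\bar u})$ with $\hat\alpha:=\int_0^1 f_y(x,t,y_{\bar u}+s(y_u-y_{\bar u}))\,\mathrm ds\ge 0$. Then $\eta:=y_u-y_{\bar u}$ solves a linear equation with coefficient $\hat\alpha$ and source $\langle g,u-\bar u\rangle$, while $z_{\bar u,u-\bar u}$ solves \reff{stddt} with coefficient $f_y(\cdot,\cdot,y_{\bar u})$ and the same source; hence $w:=\eta-z_{\bar u,u-\bar u}$ solves a linear equation with coefficient $\hat\alpha$ and right-hand side $-[\hat\alpha-f_y(\cdot,\cdot,y_{\bar u})]\,z_{\bar u,u-\bar u}$. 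Lipschitz continuity gives $|\hat\alpha-f_y(\cdot,\cdot,y_{\bar u})|\le\tfrac12 L_f\|\eta\|_{L^\infty(Q)}$, so \reff{wl2} (for $X=L^2(Q)$) and \reff{clr} combined with $\|\cdot\|_{L^r(Q)}\le|Q|^{1/r}\|\cdot\|_{L^\infty(Q)}$ (for $X=L^\infty(Q)$) yield $\|w\|_X\le C_X\,\|\eta\|_{L^\infty(Q)}\,\|z_{\bar u,u-\bar u}\|_X$. Choosing $\varepsilon$ so that $C_X\varepsilon\le\tfrac13$ for both $X$ and applying the triangle inequality in both directions turns this into \reff{E2.14.2}.

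Finally, for \reff{E2.15.2} with $X=L^2(Q)$ I feed \reff{E33} into the same scheme: from $\|z_{u,v}-z_{\bar u,v}\|_{L^2(Q)}\le M_2\varepsilon\,\|z_{\bar u,v}\|_{L^2(Q)}$ with $M_2\varepsilon\le\tfrac12$, the triangle inequality gives both inequalities. For $X=L^\infty(Q)$ I need an $L^\infty$-analogue of item (i), which I obtain by rerunning the computation of the second paragraph on the same $\delta$-equation with \reff{clr} in place of \reff{wl2}; shrinking $\varepsilon$ once more then closes the argument. I expect the main point to watch, rather than any single hard step, to be the \emph{uniformity} of every constant in $(u,\bar u,v)$: this rests entirely on the $\alpha$-independence of $C_2$ and $C_r$ in Lemma~\ref{mainex} and on \reff{unibound}, which supplies the single Lipschitz constant $L_f$; once this is in place, the precise numerical constants $2$ and $3$ emerge simply from choosing $\varepsilon$ small enough to absorb the coupling terms.
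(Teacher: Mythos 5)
Your proposal is correct and takes essentially the same route as the intended proof: the paper does not reprove this lemma but cites \cite[Lemma 5]{CJV2022b} (extending the elliptic case \cite[Lemma 2.7]{CDJ2022}), and that argument is precisely your scheme of subtracting the two linearized (resp.\ state and linearized) equations, rewriting the coupling term via the mean value theorem with the uniform Lipschitz constant furnished by \eqref{unibound}, and applying the $\alpha$-independent estimates \eqref{wl2} and \eqref{clr} of Lemma \ref{mainex} to the resulting equation with nonnegative zeroth-order coefficient. Your emphasis on the $\alpha$-uniformity of $C_2$, $C_r$ as the crux, and the absorption choice of $\varepsilon$ yielding the constants $2$ and $3$, match the cited proof.
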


\section{The optimal control problem} \label{SProblem}

The optimal control problem \eqref{ocp1}--\eqref{const}  has a global solution due to the linearity
with respect to the control, the compactness and convexity of the admissible control values, and Theorem \ref{estsemeq}
(see e.g. \cite[Theorem 5.7]{Troltzsch2010}).
On the other hand, the semilinear state equation makes the optimal 
control problem possibly nonconvex, therefore it may have local minimizers. 
We recall the following definitions of local optimality. 

\begin{Definition}{\em
The control $\bar u \in \mathcal U$ is called {\em weak local minimizer} of problem  \eqref{ocp1}--\eqref{const}, 
if there exists a number $\varepsilon>0$ such that 
\[
    J( \bar u) \le J(u) \  \ \ \forall u \in \mathcal U \text{ with } \|u-\bar u\|_{L^1(0,T)^m}\le \varepsilon;
\]
 $\bar u \in \mathcal U$ is called {\em strong local minimizer} of \Pb if there exists $\varepsilon >0$ such that
\[
    J( \bar u) \le J(u) \ \ \ \forall u \in \mathcal U \text{ with } \|y_u-y_{\bar u}\|   _{L^{\infty}(Q)}\le \varepsilon.
\]
Moreover, $\bar u \in \mathcal U$ is called {\em  strict (weak or strong) local minimizer} if the above inequalities are
strict for every admissible $u \ne \bar u $. }
\end{Definition}

Due to the compactness of the set of admissible control values, one can equivalently replace the inequality 
$\|u-\bar u\|_{L^1(0,T)^m}\le \varepsilon$ in the definition of weak local optimality with $\|u-\bar u\|_{L^q(0,T)^m}\le \varepsilon$,
where $q$ is any (finite) number $\geq 1$ (see  \cite[Lemma 2.8]{Casas-Mateos2020}).

The analysis below involves first and second order optimality conditions for  problem \eqref{ocp1}--\eqref{const}.
Further, we use the abbreviation
\bd 
		L(x,t,y,u):=L_0(x,t,y)+\langle L_1(x,t,y),u\rangle.
\ed
The next theorem provides a basis for obtaining such conditions.
It is a consequence of Theorem \ref{derivative} and the chain rule, and adapts \cite[Theorem 2.7]{CMR} to 
the more general objective functional considered in the present paper.

\begin{Theorem} \label{T3.1}
Given, $q>1$, the functional $J:L^q(0,T)^m \longrightarrow \mathbb{R}$ is of class $C^2$. Moreover, 
given $u, v, v_1, v_2 \in L^q(0,T)^m$ we have
\begin{align}
     J'(u)v& =\int_Q\Big(\frac{\partial L_0}{dy}(x,t,y_u)+\Big\langle \frac{\partial L_1}{dy}(x,t,y_u),u \Big \rangle \Big)z_{u,v}+
       \langle L_1(x,t,y_u),v\rangle\dx\dt\\
      &= \int_Q\langle p_ug+ L_1(x,t,y_u),v\rangle\dx\dt,\label{E3.4}\\
   J''(u)(v_1,v_2)& = \int_Q\Big[\frac{\partial^2L}{\partial y^2}(x,t,y_u,u) - p_u\frac{\partial^2f}{\partial y^2}(x,t,y_u)\Big]
       z_{u,v_1}z_{u,v_2}\dx\dt\label{E3.5.a}\\
        &+ \int_Q\Big\langle \frac{\partial L_1}{dy}(x,t,y_u), v_2z_{u,v_1}+ v_1z_{u,v_2}\Big\rangle\dx\dt,\label{E3.5}
\end{align}
Here, $p_u \in W(0,T) \cap C(\bar Q)$ is the unique solution of the adjoint equation
\begin{equation}
      \left\{\begin{array}{l}\displaystyle -\frac{d p}{dt}+\mathcal{A}^*p + \frac{\partial f}{\partial y}(x,t,y_u)p=  
    \frac{\partial L_0}{dy}(x,t,y_u)+\Big\langle \frac{\partial L_1}{dy}(x,t,y_u),u\Big\rangle  \;\; {\rm in } \;\; Q,\\ p = 
         0\text{ on } \Sigma, \ p(\cdot,T)=0 \;\; {\rm on } \; \;\Omega,\end{array}\right.
\label{E3.6}
\end{equation}
where $\A^*$ is the adjoint operator to $\A$.
\end{Theorem}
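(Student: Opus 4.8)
The plan is to write $J(u)=F(\mathcal G(u),u)$, where $F(y,u):=\int_Q\big[L_0(x,t,y)+\langle L_1(x,t,y),u\rangle\big]\dx\dt$, and to combine the $C^2$ regularity of the control-to-state operator $\mathcal G$ (Theorem \ref{derivative}) with the chain rule. First I would establish that $F$ is of class $C^2$ jointly in $(y,u)$ on $(W(0,T)\cap L^\infty(Q))\times L^q(0,T)^m$: the dependence on $u$ is affine, hence smooth, while differentiability in $y$ follows from Assumption \ref{exist.2} (local boundedness and local Lipschitz continuity of the first and second $y$-derivatives of $L_0$ and $L_{1,j}$) together with the uniform bound on $y_u$ from Remark \ref{rebound}, which confines the argument to a bounded set where the superposition operators associated with $\partial L/\partial y$ and $\partial^2 L/\partial y^2$ are well defined and continuous. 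Composing with the $C^2$ map $\mathcal G$ then yields $J\in C^2$.

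For the first derivative I would apply the chain rule to $J(u)=F(y_u,u)$, obtaining $J'(u)v=F_y(y_u,u)z_{u,v}+F_u(y_u,u)v$ with $z_{u,v}=\mathcal G'(u)v$. Computing $F_y$ and $F_u$ explicitly gives directly the first line of \eqref{E3.4}. To pass to the adjoint representation in the second line, I would pair the linearized state equation \eqref{stddt} with $p_u$, integrate by parts in time and apply Green's formula to transfer $\mathcal A$ to $\mathcal A^*$; the homogeneous data $z_{u,v}(\cdot,0)=0$ and $p_u(\cdot,T)=0$ make the boundary terms vanish, and invoking \eqref{E3.6} reduces the integral of $\big(\partial L_0/\partial y+\langle\partial L_1/\partial y,u\rangle\big)z_{u,v}$ to $\int_Q\langle p_u g,v\rangle\dx\dt$, producing \eqref{E3.4}. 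The regularity $p_u\in W(0,T)\cap C(\bar Q)$, obtained from Lemma \ref{mainex} applied to the time-reversed adjoint equation (with $\partial f/\partial y\ge 0$ and right-hand side in $L^r$, together with the continuity known for such parabolic equations), guarantees all pairings are legitimate.

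For the second derivative I would differentiate $J'(u)v_1=F_y(y_u,u)z_{u,v_1}+F_u(y_u,u)v_1$ in the direction $v_2$. The product and chain rules produce five contributions: a pure $F_{yy}$ term giving $\int_Q (\partial^2 L/\partial y^2)\,z_{u,v_1}z_{u,v_2}$; two mixed terms (the $u$-derivative of the explicit $u$ inside $F_y$, and the $y$-derivative of $F_u$) which assemble into the symmetric expression $\int_Q\langle\partial L_1/\partial y,\,v_2 z_{u,v_1}+v_1 z_{u,v_2}\rangle$ of \eqref{E3.5}; a vanishing $F_{uu}$ term, since $F_u$ is independent of $u$; and the term $F_y(y_u,u)\omega_{u,(v_1,v_2)}$, where $\omega_{u,(v_1,v_2)}=\mathcal G''(u)(v_1,v_2)$ solves \eqref{stdddt}. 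For this last term I would again use the adjoint identity, pairing \eqref{stdddt} with $p_u$, which converts $F_y(y_u,u)\omega_{u,(v_1,v_2)}$ into $-\int_Q p_u\,(\partial^2 f/\partial y^2)\,z_{u,v_1}z_{u,v_2}$; combined with the $F_{yy}$ term and the definition $L=L_0+\langle L_1,u\rangle$, this gives the bracketed factor in \eqref{E3.5.a}.

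I expect the main technical point to be the rigorous justification of the two integration-by-parts (adjoint) identities rather than the formal chain-rule bookkeeping: one must verify that $z_{u,v}$, $\omega_{u,(v_1,v_2)}$ and $p_u$ lie in function spaces for which the duality pairings in \eqref{lin.1} and the products $p_u\,(\partial^2 f/\partial y^2)\,z_{u,v_1}z_{u,v_2}$ are integrable. This is where the dimensional restriction $1\le n\le 3$ and the exponent condition \eqref{Ern} enter, ensuring via Theorem \ref{estsemeq} and Lemma \ref{mainex} that the states and the adjoint state are bounded in $L^\infty(Q)$, so every integrand above is integrable and the manipulations are valid.
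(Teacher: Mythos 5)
Your proposal is correct and takes essentially the same route as the paper, which obtains Theorem \ref{T3.1} precisely as a consequence of the $C^2$ regularity of the control-to-state operator (Theorem \ref{derivative}) and the chain rule, adapting \cite[Theorem 2.7]{CMR} to the more general objective functional. Your decomposition $J(u)=F(\mathcal G(u),u)$, the five-term chain-rule bookkeeping, and the two adjoint-state integration-by-parts identities are exactly the intended adaptation, so nothing further is needed.
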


We introduce the Hamiltonian 
$Q\times\mathbb R\times\mathbb R\times \mathbb R^m \ni (x,t,y,p,u) \mapsto H(x,t,y,p,u) \in \mathbb R$
in the usual way:
\begin{align*}
       H(x,t,y,p,u):=L(x,t,y,u)+p(\langle u,g\rangle-f(x,t,y)).
\end{align*}
We denote the derivative at $\bar u$ in direction $v\in L^r(0,T)^m$ of $H$ by 
$\frac {\partial H}{\partial u}(x,t,\bar y,\bar p,\bar u)(v):=
\langle  L_1(x,t,\bar y)+\bar p(x,t) g(x),v(t)\rangle$ and further 
abbreviate $\frac {\partial \bar H}{\partial u}(x,t):=
\frac {\partial H}{\partial u}(x,t,\bar y,\bar p,\bar u)$. Notice that $\frac {\partial H}{\partial u}(x,t,\bar y,\bar p,\bar u)$ is actually
independent of the last argument.
The Pontryagin type necessary optimality conditions for problem \eqref{ocp1}-\eqref{const} 
appearing in the next theorem
are well known (see e.g. \cite{Casas-Mateos2020,CMR,Troltzsch2010}). For a problem with controls depending only on time, 
we refer to \cite[Theorem 3.3]{CMR}.
 
\begin{Theorem}
If $\bar u$ is a weak local minimizer for problem  \eqref{ocp1}-\eqref{const}, 
then there exist unique elements $\bar y, \bar p\in W(0,T)\cap L^\infty(Q)$ such that 
\begin{align}
& \left\{\begin{array}{l} \frac{d \bar y}{dt}+\mathcal{A}\bar y + f(x,t,\bar y) = 
\langle \bar u,g\rangle \text{ in } Q,\\ \bar y = 
0\text{ on } \Sigma, \ \bar y(\cdot,0)=y_0\text{ on } \Omega.\end{array}\right.
\label{E3.7}\\
&\left\{\begin{array}{l}\displaystyle -  \frac{d \bar p}{dt}+\mathcal{A}^*\bar p =
 \frac {\partial H}{\partial y}(x,t,\bar y,\bar p,\bar u) \text{ in } Q,\\ \bar p = 0\text{ on } \Sigma, \ \bar p(\cdot,T)=
0\text{ on } \Omega.\end{array}\right.
\label{E3.8}
\\
&\int_\Omega \frac {\partial H}{\partial u_j}(x,t,\bar y,\bar p,\bar u) \dd x \,(u_j - \bar u_j(t))  \ge 0 \quad 
 \forall j \in \{1, \ldots, m\}, \forall u_j \in [u_{a,j}, u_{b,j}], {\rm \ and \ for \ a.e.\ } t \in [0,T].
\label{E3.9}
\end{align}
\label{pontryagin}
\end{Theorem}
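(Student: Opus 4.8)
The plan is to produce the three conditions in turn, identifying $\bar y$ and $\bar p$ with the state and adjoint state already delivered by Theorems~\ref{estsemeq} and~\ref{T3.1}, and then extracting the pointwise variational inequality \eqref{E3.9} from the integrated first-order condition by a localization argument.

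First I would set $\bar y := y_{\bar u}$; by Theorem~\ref{estsemeq} this is the unique weak solution of \eqref{E3.7} in $W(0,T)\cap L^\infty(Q)$, which settles the state part. For the adjoint state I would take $\bar p := p_{\bar u}$, the unique solution of \eqref{E3.6} with $u=\bar u$ furnished by Theorem~\ref{T3.1}, and check that \eqref{E3.6} with $u=\bar u$ coincides with \eqref{E3.8}: since $H=L+p(\langle u,g\rangle-f)$, one has $\partial H/\partial y=\partial L/\partial y-p\,\partial f/\partial y$, so \eqref{E3.8} rearranges to exactly \eqref{E3.6}. Existence, uniqueness and the regularity $\bar p\in W(0,T)\cap L^\infty(Q)$ follow from Theorem~\ref{T3.1} (equivalently, after the time reversal $t\mapsto T-t$, from Lemma~\ref{mainex}, whose hypothesis $\alpha=\partial f/\partial y(\cdot,\cdot,\bar y)\ge 0$ holds by Assumption~\ref{exist.2}, and whose right-hand side lies in $L^\infty(Q)\subset L^r(Q)$ because $\bar y,\bar u$ are bounded by Remark~\ref{rebound}).

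Next I would derive the integrated first-order condition. Because $\Uad$ is convex, for any $u\in\Uad$ and $\theta\in(0,1]$ the control $\bar u+\theta(u-\bar u)$ is admissible, and $\|\theta(u-\bar u)\|_{L^1(0,T)^m}=\theta\|u-\bar u\|_{L^1(0,T)^m}\le\varepsilon$ for $\theta$ small; weak local optimality then gives $J(\bar u)\le J(\bar u+\theta(u-\bar u))$. Dividing the difference quotient by $\theta$ and letting $\theta\downarrow 0$, the $C^1$ regularity from Theorem~\ref{T3.1} yields $J'(\bar u)(u-\bar u)\ge 0$. Invoking the representation \eqref{E3.4} and recalling $\partial H/\partial u_j(x,t,\bar y,\bar p,\bar u)=L_{1,j}(x,t,\bar y)+\bar p\,g_j$, this reads
\begin{equation*}
  \sum_{j=1}^m \int_0^T \Big(\int_\Omega \frac{\partial H}{\partial u_j}(x,t,\bar y,\bar p,\bar u)\dx\Big)(u_j(t)-\bar u_j(t))\dt \ge 0 \qquad \forall u\in\Uad.
\end{equation*}

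Finally, I would localize this to \eqref{E3.9}. Writing $\phi_j(t):=\int_\Omega \partial H/\partial u_j\dx$, I would fix all components except the $j$-th equal to $\bar u$, so the sum collapses to a single term and $\int_0^T \phi_j(t)(w(t)-\bar u_j(t))\dt\ge 0$ for every measurable $w$ with values in $[u_{a,j},u_{b,j}]$. A standard Lebesgue-point / needle-variation argument—testing with $w$ equal to $\bar u_j$ off a shrinking set around a Lebesgue point $t_0$ and to a fixed value $s\in[u_{a,j},u_{b,j}]$ on it—then gives $\phi_j(t)(s-\bar u_j(t))\ge 0$ for a.e.\ $t$ and all $s$, which is \eqref{E3.9}; uniqueness of $\bar y,\bar p$ is inherited from the unique solvability of \eqref{E3.7} and \eqref{E3.8}. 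The only genuinely delicate step is this last disintegration: one must pass from the single integral inequality to its a.e.-pointwise counterpart simultaneously for all admissible values $s$ and all indices $j$, which I would handle by choosing a countable dense set of values $s$ together with a common full-measure set of Lebesgue points of the (bounded, hence integrable) functions $\phi_j$ and $\bar u_j$.
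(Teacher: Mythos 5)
Your proposal is correct, and it is in fact more than the paper itself offers: the paper does not prove Theorem~\ref{pontryagin} at all, stating that these Pontryagin-type conditions are well known and deferring to the cited literature (in particular \cite[Theorem 3.3]{CMR} for the case of controls depending only on time). Your argument is the standard one those references use: identify $\bar y=y_{\bar u}$ via Theorem~\ref{estsemeq} and $\bar p=p_{\bar u}$ via Theorem~\ref{T3.1}, check that \eqref{E3.6} with $u=\bar u$ is exactly \eqref{E3.8} (your computation $\partial H/\partial y=\partial L/\partial y-p\,\partial f/\partial y$ is right, and uniqueness comes from the unique solvability of the semilinear state equation and of the linear adjoint equation), derive $J'(\bar u)(u-\bar u)\ge 0$ from convexity of $\Uad$ and the $C^1$ regularity of $J$, and then disintegrate the integral inequality using the representation \eqref{E3.4} and the identity $\partial H/\partial u_j=L_{1,j}(x,t,\bar y)+\bar p\,g_j$.

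One small repair in the final localization step: since $u_{a,j}$ and $u_{b,j}$ are elements of $L^\infty(0,T)$ rather than constants, a \emph{fixed} value $s$ need not be admissible on the shrinking sets around $t_0$, so your countable dense set of constant values does not directly apply. The clean fix is to observe that $s\mapsto\phi_j(t)\,(s-\bar u_j(t))$ is affine in $s$, so \eqref{E3.9} for all $u_j\in[u_{a,j}(t),u_{b,j}(t)]$ follows once it holds for the two endpoints; hence it suffices to run your needle-variation argument with the two admissible choices $w=u_{a,j}$ and $w=u_{b,j}$ (or, equivalently, with rational convex combinations $w=\bar u_j+\lambda(u_{a,j}-\bar u_j)$, $\lambda\in[0,1]\cap\mathbb{Q}$), taking a common full-measure set of Lebesgue points of the integrable functions $\phi_j\,(u_{a,j}-\bar u_j)$ and $\phi_j\,(u_{b,j}-\bar u_j)$. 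With this one-line adjustment your proof is complete and consistent with the standard treatment the paper relies on.
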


As a consequence of \reff{E3.8}, for any triplet $(\bar y, \bar p, \bar u)$, $j \in \{1, \ldots, m\}$ and for a.e. $t \in [0,T]$ it holds that
\begin{equation*}
     \bar u_j (t) = \left\{ \begin{array}{ll}
                  u_{a,j}(t) &  \mbox{ if } \;  \int_\Omega \frac{\partial \bar H}{\partial u_j}(x,t) \dd x > 0, \\
                  u_{b,j}(t) &  \mbox{ if } \;  \int_\Omega \frac{\partial \bar H}{\partial u_j}(x,t) \dd x < 0.
                         \end{array} \right.
\end{equation*}

\section{Sufficient optimality conditions} \label{S_SC}

In this section, we present a second order sufficient optimality condition, which is a version of 
\cite[Assumption 3]{CJV2022b} adapted to the case of controls depending only on time.
Below, $\bar u$ is an admissible reference control and $\bar y$ is an element of $W(0,T)\cap L^\infty(Q)$
(presumably the solution of \reff{see1}).

\begin{Assumption} \label{A4}
For a  number $k\in \{0,1,2\}$, at least one of the following conditions is fulfilled: 

\smallskip\noindent
($A_k$): There exist constants $\alpha_k,\gamma_k>0$ such that
\begin{equation}  \label{E3.13.1} 
   J'(\bar u)(u - \bar u) + J''(\bar u)(u - \bar u)^2 \ge \gamma_k\|z_{\bar u,u - \bar u}\|^{k}_{L^2(Q)}\|u -\bar u\|^{2-k}_{L^1(0,T)^m}
\end{equation}
for all $u \in \Uad \text{ with } \|   y_u - \bar y\|_{L^\infty(Q)} < \alpha_k$.

\smallskip\noindent
($B_k$): There exist constants $\tilde \alpha_k, \tilde \gamma_k>0$ such that \reff{E3.13.1} holds for all $u \in \Uad$ such 
that $\|u - \bar u\|_{L^1(0,T)^m} < \tilde\alpha_k$.
\end{Assumption}

Assumption \ref{A4}($B_0$)  was first introduced in \cite{OSVE} in the ODE optimal control context, and was extended
to parabolic PDEs in \cite{CDJ2022}, where also ($A_0$) was introduced.



As it is proved in \cite[Proposition 8]{CJV2022b}, for any $k\in\{0,1,2\}$, Assumption ($A_k$) implies ($B_k$);
if $\bar u $ is bang-bang (that is, $\bar u(t) \in \{ u_a(t), u_b(t) \}$ for a.e. $t \in [0,T]$) 
 then assumptions ($A_k$) and ($B_k$) are equivalent.

%


\bino
Next, we obtain growth estimations for the objective functional, which show, in particular, that  
assumptions \ref{A4}($A_k$) and ($B_k$) are sufficient either for strict weak 
or strict strong local optimality, correspondingly.

\begin{Theorem} \label{T3.3}
The following statements hold.
\begin{enumerate}
\item 
Let the function $L_1$ in the objective functional be independent of $y$.
Let $\bar u \in \Uad$ satisfy the optimality conditions \eqref{E3.7}--\eqref{E3.9} 
and Assumption \ref{A4}($A_k$) with some $k\in \{0,1,2\}$. Then, there exist $\varepsilon_k,\kappa > 0$ such that:
\begin{equation} \label{E3.12}
J(\bar u) + \frac{\kappa}{2}\|y_u - \bar y\|^{k}_{L^2(Q)}\|u - \bar u\|   _{L^1(0,T)^m}^{2-k} \le J(u)
\end{equation}
for all $u \in \Uad \text{ such that } \| y_u - \bar y\|_{L^\infty(Q)} < \varepsilon_k$.
\item 
Let the function $L_1$ in the objective functional be affine with respect to $y$.
Let $\bar u \in \Uad$ satisfy the optimality conditions \eqref{E3.7}--\eqref{E3.9} and Assumption \ref{A4}($B_k$) 
with some $k\in \{1,2\}$. Then, there exist $\varepsilon_k,\kappa_k > 0$ such that \eqref{E3.12} holds for all 
$u \in \Uad \text{ such that } \|   u - \bar u\|   _{L^1(0,T)^m} < \varepsilon_k$.
\item 
Let $\bar u \in \Uad$ satisfy the optimality conditions \eqref{E3.7}--\eqref{E3.9} and Assumption \ref{A4}($B_0$). 
Then, there exist $\varepsilon_0,\kappa_0 > 0$ such that \eqref{E3.12} holds for all 
$u \in \Uad \text{ such that } \|   u - \bar u\|   _{L^1(0,T)^m} < \varepsilon_0$.
\end{enumerate}
\end{Theorem}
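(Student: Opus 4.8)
The plan is to derive the quadratic growth \eqref{E3.12} (here $k=0$, so its right-hand side is $\tfrac{\kappa_0}{2}\|u-\bar u\|^2_{L^1(0,T)^m}$) by feeding a second-order Taylor expansion of $J$ into Assumption \ref{A4}($B_0$) and then absorbing the Taylor remainder into the coercive term. Write $v:=u-\bar u$; by convexity of $\Uad$ we have $u_s:=\bar u+sv\in\Uad$ for all $s\in[0,1]$. First I would record that testing the variational inequality \eqref{E3.9} with the admissible $u$, summing over $j$ and integrating in $t$, and comparing with the representation \eqref{E3.4}, gives the first-order inequality $J'(\bar u)v\ge 0$ for every $u\in\Uad$. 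Next, Taylor's formula with integral remainder gives
\[
   J(u)-J(\bar u)=J'(\bar u)v+\tfrac12 J''(\bar u)(v,v)+R,\qquad R:=\int_0^1(1-s)\,[J''(u_s)-J''(\bar u)](v,v)\,\dd s.
\]
Assumption ($B_0$), which for $k=0$ reads $J'(\bar u)v+J''(\bar u)(v,v)\ge\gamma_0\|v\|^2_{L^1(0,T)^m}$ for $\|v\|_{L^1(0,T)^m}<\tilde\alpha_0$, yields $J''(\bar u)(v,v)\ge\gamma_0\|v\|^2_{L^1(0,T)^m}-J'(\bar u)v$; substituting and discarding the nonnegative term $\tfrac12 J'(\bar u)v$ leaves $J(u)-J(\bar u)\ge\tfrac{\gamma_0}{2}\|v\|^2_{L^1(0,T)^m}+R$. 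The whole proof then reduces to the estimate $|R|\le\tfrac{\gamma_0}{4}\|v\|^2_{L^1(0,T)^m}$ for $\|v\|_{L^1(0,T)^m}$ small, which gives \eqref{E3.12} with $\kappa_0=\gamma_0/2$.

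Before estimating $R$ I would assemble three ingredients, all uniform in $s\in[0,1]$. (i) The difference $y_{u_s}-\bar y$ solves a linear equation of type \eqref{Elin} with nonnegative zero-order coefficient $\int_0^1 f_y(x,t,\bar y+\sigma(y_{u_s}-\bar y))\,\dd\sigma$ and right-hand side $\langle g,sv\rangle$; since $\|\langle g,v\rangle\|_{L^r(Q)}\le C\|v\|^{1/r}_{L^1(0,T)^m}$ (using the uniform $L^\infty$-bound of $v$ from Remark \ref{rebound}), estimate \eqref{clr} gives $\|y_{u_s}-\bar y\|_{L^\infty(Q)}\le C\|v\|^{1/r}_{L^1(0,T)^m}\to 0$, and an analogous argument for \eqref{E3.6} gives $\|p_{u_s}-\bar p\|_{L^\infty(Q)}\to 0$. (ii) Lemma \ref{l2l1time} gives the decisive bound $\|z_{u_s,v}\|_{L^\infty(0,T;L^2(\Omega))}\le C\|v\|_{L^1(0,T)^m}$ with a constant uniform over $\Uad$, because $\|f_y(\cdot,y_{u_s})\|_{L^\infty(Q)}$ is uniformly bounded (Remark \ref{rebound} and the local boundedness of $f_y$). (iii) An energy estimate for $w_s:=z_{u_s,v}-z_{\bar u,v}$, which solves a linear equation with nonnegative zero-order coefficient and right-hand side $-(f_y(\cdot,y_{u_s})-f_y(\cdot,\bar y))\,z_{\bar u,v}$, gives $\|w_s\|_{L^\infty(0,T;L^2(\Omega))}\le C\,\|y_{u_s}-\bar y\|_{L^\infty(Q)}\,\|v\|_{L^1(0,T)^m}$; this is an $L^\infty(0,T;L^2)$-sharpening of \eqref{E33} that is indispensable below.

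With these in hand I would split $[J''(u_s)-J''(\bar u)](v,v)$ along the two groups of terms in \eqref{E3.5.a}--\eqref{E3.5}. In the quadratic part, with coefficient $\varphi_w:=\frac{\partial^2 L}{\partial y^2}(x,t,y_w,w)-p_w\frac{\partial^2 f}{\partial y^2}(x,t,y_w)$, I would write the difference as $\int_Q(\varphi_{u_s}-\varphi_{\bar u})z_{u_s,v}^2+\int_Q\varphi_{\bar u}(z_{u_s,v}^2-z_{\bar u,v}^2)$. The coefficient difference $\varphi_{u_s}-\varphi_{\bar u}$ splits into an $L^\infty(Q)$-small part (controlled by (i)) and a part $\langle\frac{\partial^2 L_1}{\partial y^2}(x,t,\bar y),sv\rangle$ that is only $L^1$-small in $t$; extracting one factor $|v(t)|$ to produce a $\|v\|_{L^1(0,T)^m}$ while bounding the remaining $z$-factors by (ii), and treating $z_{u_s,v}^2-z_{\bar u,v}^2=(z_{u_s,v}-z_{\bar u,v})(z_{u_s,v}+z_{\bar u,v})$ via (ii)--(iii), yields $o(1)\|v\|^2_{L^1(0,T)^m}$. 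The genuinely delicate part is the cross term $\int_Q\langle\frac{\partial L_1}{\partial y}(x,t,y_w),v\,z_{w,v}\rangle$, where the weakly-small factor $v$ is only $L^1$-small: any estimate that spends $\|v\|$ in an $L^2(0,T)$-norm (which costs only $\|v\|^{1/2}_{L^1(0,T)^m}$) and then pairs it with the $L^2(Q)$-bound \eqref{E33} for $w_s$ degrades the overall power of $\|v\|_{L^1(0,T)^m}$ below $2$ and fails, since $r>2$. I would instead set $b_w(t):=\int_\Omega\frac{\partial L_1}{\partial y}(x,t,y_w)\,z_{w,v}(x,t)\,\dd x\in\R^m$, show $\|b_{u_s}-b_{\bar u}\|_{L^\infty(0,T)^m}\le o(1)\|v\|_{L^1(0,T)^m}$ using (i) together with the $L^\infty(0,T;L^2)$-bounds (ii)--(iii), and conclude $\bigl|2\int_0^T\langle v,b_{u_s}-b_{\bar u}\rangle\,\dd t\bigr|\le o(1)\|v\|^2_{L^1(0,T)^m}$.

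Combining these bounds gives $|R|=o(\|v\|^2_{L^1(0,T)^m})$ uniformly in $s$, so there is $\varepsilon_0\in(0,\tilde\alpha_0]$ with $|R|\le\tfrac{\gamma_0}{4}\|v\|^2_{L^1(0,T)^m}$ whenever $\|v\|_{L^1(0,T)^m}<\varepsilon_0$, completing the argument. I expect the main obstacle to be exactly this cross term: controlling it forces the replacement of the $L^2(Q)$-estimate \eqref{E33} by the $L^\infty(0,T;L^2)$-type estimates in (ii)--(iii), since only these keep the correct quadratic power of $\|v\|_{L^1(0,T)^m}$ when the $L^1$-small factor $v$ is paired against the linearized state $z_{w,v}$.
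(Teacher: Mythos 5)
Your argument for item 3 is correct and follows essentially the paper's route: a Taylor expansion around $\bar u$, nonnegativity of $J'(\bar u)(u-\bar u)$ obtained from \eqref{E3.9} via the representation \eqref{E3.4}, Assumption \ref{A4}($B_0$), and absorption of the remainder $[J''(u_s)-J''(\bar u)](v,v)$ for small $\|v\|_{L^1(0,T)^m}$ --- the latter being exactly item 1 of Lemma \ref{biglemmat}, which the paper proves in the Appendix. Your treatment of the delicate cross term $2\int_Q\langle \frac{\partial L_1}{\partial y},v\,z\rangle\dx\dt$ deviates from the paper in a worthwhile way: you derive an $L^\infty(0,T;L^2(\Omega))$ sharpening of \eqref{E33} by an energy estimate and pair it pointwise in $t$ against $|v(t)|$ (your function $b_w$), whereas the paper's proof of Lemma \ref{biglemmat} handles the corresponding term $I_6$ through the duality estimate of Lemma \ref{estLs}, bounding $\| z_{u_\theta,u-\bar u}-z_{\bar u,u-\bar u}\|_{L^s(Q)}$ for $s<\frac{n+2}{n}$ by the $L^1(Q)$-norm of the right-hand side and spending only $\|v\|_{L^1(0,T)^m}^{1/s'}$ on the factor $v$. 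Both devices repair exactly the loss of the quadratic power that you correctly diagnose: the naive pairing of $\|v\|_{L^2(0,T)^m}\sim\|v\|^{1/2}_{L^1(0,T)^m}$ with \eqref{E33} yields only the power $\tfrac32+\tfrac1r<2$, since $r>2$.

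There is, however, a genuine gap: you prove only item 3 of the theorem, and your $k=0$ absorption does not extend verbatim to items 1 and 2. For $k\ge 1$ the coercive term is $\gamma_k\|z_{\bar u,u-\bar u}\|^{k}_{L^2(Q)}\|v\|^{2-k}_{L^1(0,T)^m}$, and a remainder bound of the form $\rho\|v\|^2_{L^1(0,T)^m}$ cannot be absorbed into it: there is no lower bound on $\|z_{\bar u,v}\|_{L^2(Q)}$ in terms of $\|v\|_{L^1(0,T)^m}$ (highly oscillatory $v$ makes $z_{\bar u,v}$ arbitrarily small relative to $\|v\|_{L^1(0,T)^m}$), so for such $v$ the remainder dominates the coercive term. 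One needs instead the sharper remainder estimate $\rho\|z_{\bar u,u-\bar u}\|^2_{L^2(Q)}$ of items 2 and 3 of Lemma \ref{biglemmat}, and it is precisely there that the structural hypotheses on $L_1$ enter (affine in $y$ for ($B_1$), ($B_2$); independent of $y$ for the ($A_k$) cases, where moreover smallness is measured in $\|y_u-\bar y\|_{L^\infty(Q)}$ rather than in $\|v\|_{L^1(0,T)^m}$, so your smallness bookkeeping would also have to be reversed via the implication at the end of the paper's Appendix proof). Your $b_w$ device, which deliberately spends a full factor $\|v\|_{L^1(0,T)^m}$, produces a right-hand side of the wrong form for this purpose. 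Finally, even after absorption, passing from $\|z_{\bar u,u-\bar u}\|^k_{L^2(Q)}$ in Assumption \ref{A4} to $\|y_u-\bar y\|^k_{L^2(Q)}$ in the conclusion \eqref{E3.12} requires the two-sided estimate \eqref{E2.14.2} of Lemma \ref{sprep}, valid only when $\|y_u-\bar y\|_{L^\infty(Q)}$ is small; this step is vacuous for $k=0$ and therefore invisible in your argument, but it is an unavoidable ingredient in items 1 and 2.
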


A proof of Theorem \ref{T3.3} in case of  a less general objective functional can be found in \cite{CJV2022b}. 
It is a consequence of the next two lemmas, which will be used also in Section \ref{SSR}.
The first of them has been proved for various types of objective functionals, see e.g.
\cite[Lemma 6]{CT16},\cite[Lemma 3.11]{CRT15} or \cite[Lemma 10]{CJV2022b}. 
Due to the more general objective functional in the present paper and for readers' convenience 
we present a proof in the Appendix.

\begin{Lemma} Let $\bar u \in \Uad$. The following holds.
\begin{enumerate}
\item For every $\rho > 0$ there exists $\varepsilon > 0$ such that
\begin{align}
 \vert  [J''(\bar u + \theta(u - \bar u)) - J''(\bar u)](u-\bar u)^2\vert \leq \rho \| u - \bar u\|_{L^1( 0,T)^m}^2
\end{align}
 holds for all $u \in \Uad$ with $\| u - \bar u\|_{L^1( 0,T)^m} < \varepsilon$ and every $\theta \in [0,1]$.\label{drei}
\item Let the function $L_1$ in the objective functional be affine with respect to $y$.
For every $\rho > 0$ there exists $\varepsilon > 0$ such that
\begin{align}
 \vert  [J''(\bar u + \theta(u - \bar u)) - J''(\bar u)](u-\bar u)^2\vert \leq \rho\|z_{\bar u,u-\bar u}\|^2_{L^2(Q)}.
\label{zweis}
\end{align}
holds for all $u \in \Uad$ with $\| u - \bar u\|_{L^1( 0,T)^m} < \varepsilon$ and $\theta \in [0,1]$. 
\label{zwei}
\item Let the function $L_1$ in the objective functional be independent of $y$.
For every $\rho > 0$ there exists $\varepsilon > 0$ such that \eqref{zweis} holds
for all $u \in \Uad$ with $\|   y_u - \bar y\|_{L^\infty(Q)} < \varepsilon$ and $\theta \in [0,1]$. \label{eins}
\end{enumerate}
\label{biglemmat}
\end{Lemma}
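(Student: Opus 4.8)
The plan is to base everything on the explicit form \eqref{E3.5.a}--\eqref{E3.5} of the second derivative. Writing $v:=u-\bar u$, $u_\theta:=\bar u+\theta v$, $z_\theta:=z_{u_\theta,v}$, $z_0:=z_{\bar u,v}$, and
\[
   \phi_u:=\frac{\partial^2 L}{\partial y^2}(x,t,y_u,u)-p_u\,\frac{\partial^2 f}{\partial y^2}(x,t,y_u),
\]
the quantity to be estimated splits as $[J''(u_\theta)-J''(\bar u)]v^2=I+II$, where
\[
   I:=\int_Q\big(\phi_{u_\theta}z_\theta^2-\phi_{\bar u}z_0^2\big)\dx\dt,
   \qquad
   II:=2\int_Q\Big\langle z_\theta\,\frac{\partial L_1}{\partial y}(x,t,y_{u_\theta})-z_0\,\frac{\partial L_1}{\partial y}(x,t,\bar y),\,v\Big\rangle\dx\dt.
\]
The common preparatory step is to reduce all three items to the smallness of $\|y_{u_\theta}-\bar y\|_{L^\infty(Q)}$, \emph{uniformly} in $\theta\in[0,1]$. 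For items \ref{drei} and \ref{zwei}, where the hypothesis is $\|v\|_{L^1(0,T)^m}<\varepsilon$, I would prove by contradiction, using the boundedness of $\Uad$ in $L^\infty$ and the weak-to-strong continuity \eqref{semilinweak} of the control-to-state map, that for every $\eta>0$ there is $\varepsilon>0$ with $\sup_{\theta\in[0,1]}\|y_{u_\theta}-\bar y\|_{L^\infty(Q)}<\eta$. For item \ref{eins}, whose hypothesis is already $\|y_u-\bar y\|_{L^\infty(Q)}<\varepsilon$, I would run a continuation argument in $\theta$: the map $\theta\mapsto\|y_{u_\theta}-\bar y\|_{L^\infty(Q)}$ is continuous and vanishes at $\theta=0$, and on the open set where it lies below the threshold of Lemma \ref{sprep}(ii) it is dominated by $2\|z_{\bar u,u_\theta-\bar u}\|_{L^\infty(Q)}=2\theta\|z_0\|_{L^\infty(Q)}\le 3\|y_u-\bar y\|_{L^\infty(Q)}$ via \eqref{E2.14.2}; connectedness of $[0,1]$ then yields the uniform smallness.

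For the coefficient part $I$, common to all three items, I would write $I=\int_Q(\phi_{u_\theta}-\phi_{\bar u})z_\theta^2\dx\dt+\int_Q\phi_{\bar u}(z_\theta^2-z_0^2)\dx\dt$. The coefficient difference $\phi_{u_\theta}-\phi_{\bar u}$ is small in $L^\infty(Q)$: by Assumption \ref{exist.2} the maps $\partial_{yy}L$ and $\partial_{yy}f$ are locally Lipschitz, $p$ is uniformly bounded by Remark \ref{rebound} and the adjoint estimate, and both $\|y_{u_\theta}-\bar y\|_{L^\infty(Q)}$ and $\|p_{u_\theta}-\bar p\|_{L^\infty(Q)}$ tend to $0$; combined with $\|z_\theta\|_{L^2(Q)}\le C\|z_0\|_{L^2(Q)}$ from \eqref{E2.15.2}, this makes the first summand at most $\tfrac{\rho}{2}\|z_0\|_{L^2(Q)}^2$. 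In the second summand I would use $z_\theta^2-z_0^2=(z_\theta-z_0)(z_\theta+z_0)$ together with the key estimate \eqref{E33} of Lemma \ref{sprep}(i) and \eqref{E2.15.2}. This gives $|I|\le\rho\|z_0\|_{L^2(Q)}^2$ once the states are close, which already serves items \ref{zwei} and \ref{eins}; for item \ref{drei} I would instead bound $\|z_\theta\|_{L^2(Q)},\|z_0\|_{L^2(Q)}\le C\|v\|_{L^1(0,T)^m}$ by Lemma \ref{l2l1time}, turning the same computation into $|I|\le\rho\|v\|_{L^1(0,T)^m}^2$.

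It remains to treat the cross term $II$. For item \ref{eins}, $L_1$ is independent of $y$, hence $\partial_y L_1\equiv 0$ and $II=0$; together with the bound on $I$ this closes that case. For item \ref{drei} (general $L_1$) I would split $II$ into a part carrying the Lipschitz-small factor $\partial_yL_1(y_{u_\theta})-\partial_yL_1(\bar y)$ and a part carrying $z_\theta-z_0$; using $|v|\le 2M_{\mathcal U}$ (Remark \ref{rebound}), the $L^\infty(0,T;L^2(\Omega))$ bound $\|z_\theta\|\le C\|v\|_{L^1}$ of Lemma \ref{l2l1time}, and \eqref{E33}, both parts are bounded by $\rho\|v\|_{L^1(0,T)^m}^2$, as needed. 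For item \ref{zwei}, $L_1$ is affine, so $\partial_y L_1(x,t,y)=d(x,t)$ is independent of $y$ and the cross term collapses to $II=2\int_Q\langle d,v\rangle(z_\theta-z_0)\dx\dt$; here the crude estimates only yield a bound of order $\|y_{u_\theta}-\bar y\|_{L^\infty(Q)}\|v\|_{L^1(0,T)^m}^2$, which does not dominate $\|z_0\|_{L^2(Q)}^2$ (indeed $\|z_0\|_{L^2}\le C\|v\|_{L^1}$, but not conversely).

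This last point is the main obstacle, and getting the sharp bound for item \ref{zwei} forces me to exploit cancellation rather than sizes. The plan is to introduce the adjoint state $\chi$ solving the backward linear equation $-\partial_t\chi+\mathcal A^*\chi+f_y(x,t,y_{u_\theta})\chi=\langle d,v\rangle$ with $\chi(\cdot,T)=0$, and to recall that $z_\theta-z_0$ solves the linearized equation with right-hand side $-(f_y(x,t,y_{u_\theta})-f_y(x,t,\bar y))z_0$. Testing one equation against the other rewrites the cross term as
\[
   II=-2\int_Q\chi\,\big(f_y(x,t,y_{u_\theta})-f_y(x,t,\bar y)\big)\,z_0\dx\dt,
\]
which now carries the Lipschitz-small factor $|f_y(x,t,y_{u_\theta})-f_y(x,t,\bar y)|\le L\|y_{u_\theta}-\bar y\|_{L^\infty(Q)}$ and two linearized-state factors $\chi$ and $z_0$, both generated by the \emph{same} temporal control $v$ through the fixed, disjointly supported spatial profiles $d$ and $g$. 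The crux is then the a priori comparison $\int_Q|\chi|\,|z_0|\dx\dt\le C\|z_0\|_{L^2(Q)}^2$, which I would derive from the linearized-state estimates of \cite{CMR} (the analogue of Lemma \ref{l2l1time} for the two profiles). Granting it, $|II|\le 2LC\|y_{u_\theta}-\bar y\|_{L^\infty(Q)}\|z_0\|_{L^2(Q)}^2\le\rho\|z_0\|_{L^2(Q)}^2$ for states sufficiently close, completing item \ref{zwei}. I expect this comparison estimate---equivalently, controlling the oscillation-induced loss between two parabolic states driven by the same $v$---to be the only genuinely delicate step; everything else is Lipschitz continuity of the data combined with Lemmas \ref{sprep} and \ref{l2l1time}.
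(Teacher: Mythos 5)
Your decomposition, your treatment of items \ref{drei} and \ref{eins}, and your use of Lemmas \ref{sprep} and \ref{l2l1time} all track the paper's proof closely. The genuine gap is in item \ref{zwei}, exactly at the step you flag as delicate: the comparison $\int_Q|\chi|\,|z_0|\dx\dt\le C\|z_0\|_{L^2(Q)}^2$ is not in \cite{CMR}, does not follow from any of the cited linearized-state estimates (Lemma \ref{l2l1time} bounds both $\chi$ and $z_0$ by $\|v\|_{L^1(0,T)^m}$, which only yields $|II|\lesssim\|y_{u_\theta}-\bar y\|_{L^\infty(Q)}\|v\|_{L^1}\|z_0\|_{L^2}$, insufficient since $\|v\|_{L^1}$ does not dominate $\|z_0\|_{L^2}$), and it is false in general. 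The profiles are not comparable: $d=\partial_y L_1$ is an arbitrary bounded function of $(x,t)$ with no support relation to $g$, and it may oscillate in time. Choosing $v$ sign-matched to the temporal oscillation of $d$ makes $\langle d,v\rangle$ sign-definite while $\langle g,v\rangle$ oscillates with cancellation, so $\chi$ stays of order $\|v\|_{L^1}$ on a set where $z_0$ carries its mass, whence $\int_Q|\chi||z_0|\gtrsim\|z_0\|_{L^1(Q)}$; since $\|z_0\|_{L^2}^2\le\|z_0\|_{L^1}\|z_0\|_{L^\infty}$ and $\|z_0\|_{L^\infty}\to0$ along such a sequence, the ratio $\int|\chi||z_0|/\|z_0\|_{L^2}^2$ blows up. So the duality identity you derive (which is correct, and is essentially the paper's difference equation for $\psi=z_{\bar u,v}-z_{u_\theta,v}$ read through an adjoint) cannot be closed this way.

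The paper closes this term ($I_6$ in its proof) without any comparison between differently driven states, and its mechanism is worth internalizing: apply H\"older with exponents $s\in[1,\frac{n+2}{n})$ and $s'$, putting the bounded factor $v=u-\bar u$ in $L^{s'}$, which costs only $\|v\|_{L^{s'}}\le(2M_{\mathcal U})^{(s'-1)/s'}\|v\|_{L^1}^{1/s'}$, and estimating $\|z_{u_\theta,v}-z_{\bar u,v}\|_{L^s(Q)}$ by the $L^1$-to-$L^s$ parabolic estimate \eqref{aeqestLs} of Lemma \ref{estLs} applied to the difference equation, whose right-hand side $\frac{\partial^2 f}{\partial y^2}(y_\vartheta)(y_{\bar u}-y_{u_\theta})z_{u_\theta,v}$ lies in $L^1(Q)$ with norm $\le C\|y_{u_\theta}-\bar y\|_{L^2(Q)}\|z_{u_\theta,v}\|_{L^2(Q)}$. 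By \eqref{E2.14.2}--\eqref{E2.15.2} both factors are comparable to $\|z_{\bar u,v}\|_{L^2(Q)}$, so $I_6\le C\|v\|_{L^1}^{1/s'}\|z_{\bar u,v}\|_{L^2(Q)}^2$: the quadratic bound comes from the $L^1$ right-hand side of the difference equation, and the arbitrarily small prefactor comes from $\|v\|_{L^1}^{1/s'}$ --- which is precisely why item \ref{zwei} carries the hypothesis $\|u-\bar u\|_{L^1(0,T)^m}<\varepsilon$ rather than state closeness. A secondary, fixable slip: in item \ref{drei} your claim that $\phi_{u_\theta}-\phi_{\bar u}$ is small in $L^\infty(Q)$ fails for general $L_1$, since $\phi$ contains $\langle\frac{\partial^2 L_1}{\partial y^2}(x,t,y_u),u\rangle$ and $u_\theta-\bar u$ is only $L^1$-small; the paper isolates this piece ($I_{1,3}$) and bounds it by $C\|v\|_{L^1}^3\le\rho\|v\|_{L^1}^2$ using the $L^\infty(0,T;L^2(\Omega))$ estimate of Lemma \ref{l2l1time}, a repair that fits your framework.
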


The next lemma shows that Assumption \ref{A4} implies a growth similar to \reff{E3.12} of the first derivative 
of the objective functional in a neighborhood of $\bar u$. 

\begin{Lemma} \label{good}
The following claims are fulfilled.
\begin{enumerate}
	\item Let the function $L_1$ in the objective functional be independent of $y$.	
Let $\bar u$ satisfy assumption $(A_k)$, for some $k\in \{0,1,2\}$.
	Then, there exist $\bar \alpha_k,\bar\gamma_k > 0$ such that
	\begin{equation}
		J'(u)(u - \bar u) \ge \bar\gamma_k\|   z_{\bar u,u - \bar u}\|   ^{k}_{L^2(Q)}\|u - \bar u\|^{2-k}_{L^1(0,T)^m}
		\label{E4.6}
	\end{equation}
	 for every $u \in \Uad \text{ with } \|   y_u - \bar y\|_{L^\infty(Q)} < \bar \alpha_k$.
	\item Let the function $L_1$ in the objective functional be affine with respect to $y$.  
Let $\bar u$ satisfy assumption $(B_k)$ for some $k\in \{1,2\}$.
	Then, there exist $\bar \alpha_k ,\bar\gamma_k > 0$ such that \reff{E4.6} holds
	for every $u \in \Uad \text{ with }\| u -\bar u\|_{L^1(0,T)^m} < \bar \alpha_k$.
	\item Let $\bar u$ satisfy assumption $(B_0)$.
	Then, there exist $\bar \alpha_0 ,\bar\gamma_0 > 0$ such that \reff{E4.6} holds
	for every $u \in \Uad \text{ with }\| u -\bar u\|_{L^1(0,T)^m} < \bar \alpha_0$.
	\end{enumerate}
\end{Lemma}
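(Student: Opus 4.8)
The plan is to expand $J'(u)(u-\bar u)$ about $\bar u$ by the fundamental theorem of calculus and then compare with the quantity controlled by Assumption~\ref{A4}. Writing $v:=u-\bar u$ and using that $J$ is of class $C^2$ (Theorem~\ref{T3.1}), I would first record
\begin{align*}
J'(u)v &= J'(\bar u)v + \int_0^1 J''(\bar u + sv)(v,v)\,ds \\
&= \big[J'(\bar u)v + J''(\bar u)(v,v)\big] + \int_0^1 \big[J''(\bar u + sv) - J''(\bar u)\big](v,v)\,ds .
\end{align*}
The bracketed term is precisely the left-hand side of \eqref{E3.13.1}, hence is bounded below by $\gamma_k\|z_{\bar u,v}\|^{k}_{L^2(Q)}\|v\|^{2-k}_{L^1(0,T)^m}$ under hypothesis $(A_k)$ or $(B_k)$, as soon as $u$ lies in the neighborhood prescribed by Assumption~\ref{A4}. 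It then remains only to absorb the remainder integral into a fixed fraction of this lower bound.

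For the remainder $E:=\int_0^1[J''(\bar u+sv)-J''(\bar u)](v,v)\,ds$, I would bound $|E|$ by $\sup_{\theta\in[0,1]}|[J''(\bar u+\theta v)-J''(\bar u)](v)^2|$ and invoke Lemma~\ref{biglemmat}. The three cases of the present lemma match its three parts exactly: in case~1 ($L_1$ independent of $y$, neighborhood in $L^\infty$) part~\ref{eins} yields $|E|\le\rho\|z_{\bar u,v}\|^2_{L^2(Q)}$; in case~2 ($L_1$ affine, neighborhood in $L^1$) part~\ref{zwei} yields the same bound; in case~3 (general $L_1$, neighborhood in $L^1$) part~\ref{drei} yields $|E|\le\rho\|v\|^2_{L^1(0,T)^m}$. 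Here $\rho>0$ is at our disposal, and the corresponding radius from Lemma~\ref{biglemmat} is retained.

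To turn these bounds into the target form I would use the a priori estimate $\|z_{\bar u,v}\|_{L^2(Q)}\le C\|v\|_{L^1(0,T)^m}$, which follows from Lemma~\ref{l2l1time} applied with $\alpha_0=\frac{\partial f}{\partial y}(\cdot,\cdot,\bar y)$ (bounded by Assumption~\ref{exist.2} and Remark~\ref{rebound}) together with $\|\cdot\|_{L^2(Q)}\le\sqrt{T}\,\|\cdot\|_{L^\infty(0,T;L^2(\Omega))}$. For every $k\in\{0,1,2\}$ one then has $\rho\|z_{\bar u,v}\|^2_{L^2(Q)}=\rho\|z_{\bar u,v}\|^{k}_{L^2(Q)}\|z_{\bar u,v}\|^{2-k}_{L^2(Q)}\le\rho\,C^{2-k}\|z_{\bar u,v}\|^{k}_{L^2(Q)}\|v\|^{2-k}_{L^1(0,T)^m}$, which covers cases~1 and~2, while in case~3 the bound $\rho\|v\|^2_{L^1(0,T)^m}$ already matches the pattern for $k=0$. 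Choosing $\rho$ small enough that the emerging constant does not exceed $\gamma_k/2$, and setting $\bar\alpha_k$ equal to the minimum of the radius from Lemma~\ref{biglemmat} for this $\rho$ and the radius in Assumption~\ref{A4}, gives $J'(u)v\ge\frac{\gamma_k}{2}\|z_{\bar u,v}\|^{k}_{L^2(Q)}\|v\|^{2-k}_{L^1(0,T)^m}$, i.e.\ \eqref{E4.6} with $\bar\gamma_k=\gamma_k/2$.

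The main obstacle is bookkeeping rather than analysis: one must ensure that, in each of the three cases, the remainder bound furnished by Lemma~\ref{biglemmat} is compatible with the exponent pattern $\|z_{\bar u,v}\|^{k}_{L^2(Q)}\|v\|^{2-k}_{L^1(0,T)^m}$ on the right of \eqref{E4.6}. This is exactly where the inequality $\|z_{\bar u,v}\|_{L^2(Q)}\le C\|v\|_{L^1(0,T)^m}$ is indispensable, since it permits trading factors of $\|z_{\bar u,v}\|_{L^2(Q)}$ for factors of $\|v\|_{L^1(0,T)^m}$ but not the reverse; one should check that every trade needed here runs in this admissible direction, which it does because for $k\le 2$ the $\|z\|^2$-bounds carry at least as many factors of $\|z_{\bar u,v}\|_{L^2(Q)}$ as the target requires.
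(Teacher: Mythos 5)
Your proof is correct and is essentially the argument the paper intends (it omits the proof, deferring to the companion paper \cite{CJV2022b}, whose argument is exactly this): Taylor-expand $J'(u)(u-\bar u)$ around $\bar u$, bound the bracket $J'(\bar u)v+J''(\bar u)v^2$ from below via Assumption \ref{A4}, absorb the remainder using the matching case of Lemma \ref{biglemmat}, and align exponents with the estimate $\|z_{\bar u,v}\|_{L^2(Q)}\le C\|v\|_{L^1(0,T)^m}$ from Lemma \ref{l2l1time}. The case bookkeeping (which part of Lemma \ref{biglemmat} pairs with which neighborhood and which form of $L_1$) and the direction of the norm trade are handled correctly.
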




\bino
{\large\bf Reformulations of Assumption 3 using cones.}\\
We recall that some of the items in Assumption \ref{A4} can be formulated equivalently by restricting the admissible control
variations $v = u - \bar u$ to appropriate cones.
This applies to ($B_k$) or to ($A_k$) depending on whether the objective functional explicitly depends on the control or not.

Obviously any admissible control variation $v = u - \bar u$, $u \in \mathcal U$, satisfies the conditions
\begin{equation}\label{sign}
v \in L^2(0,T)^m, \quad  v_j(t) \geq 0 \;\textrm{ whenever }\; \bar u_j(t) =u_{a,j}(t) \; \textrm{ and } \;
    v_j(t) \leq 0 \; \textrm{ whenever } \; \bar u_j(t) =u_{b,j}(t).  
\end{equation}   
Then, for $\tau>0$ define 
\begin{align}
D^\tau_{\bar u}& := 
\Big\{v\in L^2(0,T)^m\Big \vert   v\text{ satisfies }\eqref{sign}\text{ and } v_j(x,t)=
0\text{ if }\Big\vert  \frac{\partial \bar H}{\partial u_j}(x,t)\Big \vert   >\tau, \ 1\leq j \leq m\Big\},\\
G^{\tau}_{\bar u}&:=
\Big\{v\in L^2(0,T)^m\Big \vert v\text{ satisfies }\eqref{sign}\text{ and } J'(\bar u)(v)\leq \tau \|z_{\bar u,v} \|_{L^1(Q)}\Big\},\\
E^{\tau}_{\bar u}&:=
\Big\{v\in L^2(0,T)^m\Big \vert   v\text{ satisfies }\eqref{sign}\text{ and } J'(\bar u)(v)\leq \tau \|z_{\bar u,v} \|_{L^2(Q)}\Big\},\\
C^\tau_{\bar u}&:=D^{\tau}_{\bar u}\cap G^{\tau}_{\bar u}  \label{ccone}.
\end{align}
The cones $D^\tau_{\bar u}$, $E^{\tau}_{\bar u} \text{ and } G^{\tau}_{\bar u}$ were introduced in \cite{Casas12,CT16}
as extensions of the usual critical cone. Most recently, the cone $C^\tau_{\bar u}$ was defined in \cite{Casas-Mateos2020} 
and also used in \cite{CM2021}.
In the ODE control literature, a cone similar to $D^{\tau}_{\bar u}$ has been in use for a long time, see \cite{Osmo75}.

\begin{Theorem} \label{equi1}
\begin{enumerate}
\item For $k\in \{0,2\}$,  Assumption \ref{A4}$(B_k)$ is equivalent to the following condition ($\bar B_{k}$):
there exist constants $\alpha_k,\gamma_k,\tau>0$ such that 
\begin{equation} \label{EbBk}
        J'(\bar u)(u - \bar u) + J''(\bar u)(u - \bar u)^2 \ge 
            \gamma_k\|   z_{\bar u,u - \bar u}\|^{k}_{L^2(Q)}\| u - \bar u\|^{2-k}_{L^1(0,T)^m},
\end{equation}
for all $u \in \Uad$ for which $(u-\bar u )\in  D_{\bar u}^{\tau}\text{ and } \|u-\bar u \| _{L^{1}(0,T)^m} < \alpha_k$.
\item 
Let the function $L_1$ in the objective functional be independent of $y$, then Assumption \ref{A4}$(A_2)$ is equivalent 
to the following condition ($\bar A_{2}$):
there exist constants $\alpha_2,\gamma_2,\tau>0$ such that
\begin{equation} \label{EbA2}
    J'(\bar u)(u - \bar u) + J''(\bar u)(u - \bar u)^2 \ge \gamma_2\|   z_{\bar u,u - \bar u}\|^{2}_{L^2(Q)}
\end{equation}
for all $u \in \Uad$ for which $(u-\bar u )\in C^{\tau}_{\bar u}$ and $\| y_u-\bar y\|_{L^\infty(Q)}<\alpha_2$.
\end{enumerate}
\end{Theorem}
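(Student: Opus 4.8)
The plan is to prove each equivalence by verifying only the non-trivial implication. In both parts the implication from the global condition to its cone-restricted version is immediate: $(\bar B_k)$ (resp. $(\bar A_2)$) asks for the inequality only on the subset of admissible variations lying in $D^\tau_{\bar u}$ (resp. $C^\tau_{\bar u}$), so it follows from Assumption~\ref{A4}$(B_k)$ (resp. $(A_2)$) with the same constants and any $\tau>0$. Thus the whole content is to show $(\bar B_k)\Rightarrow(B_k)$ and $(\bar A_2)\Rightarrow(A_2)$. The entire argument rests on one structural observation: writing $d_j(t):=\int_\Omega \frac{\partial \bar H}{\partial u_j}(x,t)\dx$, formula \eqref{E3.4} reads $J'(\bar u)v=\int_0^T\sum_j d_j(t)v_j(t)\dt$, and by the sign condition \eqref{sign} together with \eqref{E3.9} each summand satisfies $d_j v_j\ge 0$. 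Given $\tau>0$ and an admissible $v=u-\bar u$, the plan is to split $v=v^\tau+w^\tau$, where $v^\tau_j$ equals $v_j$ on $\{t:|d_j(t)|\le\tau\}$ and $0$ elsewhere. Then $\bar u+v^\tau\in\Uad$, $v^\tau\in D^\tau_{\bar u}$, $J'(\bar u)v^\tau\ge 0$, and since $d_jw^\tau_j=|d_j|\,|w^\tau_j|>\tau|w^\tau_j|$ on the support of $w^\tau_j$, one obtains the key bound $J'(\bar u)w^\tau\ge\tau\|w^\tau\|_{L^1(0,T)^m}$.

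For Part~1 I would apply $(\bar B_k)$ to $v^\tau\in D^\tau_{\bar u}$ (legitimate, as $\|v^\tau\|_{L^1}\le\|v\|_{L^1}<\alpha_k$) to bound the $v^\tau$-block below by $\gamma_k\|z_{\bar u,v^\tau}\|^k_{L^2(Q)}\|v^\tau\|^{2-k}_{L^1(0,T)^m}$, and add $J'(\bar u)w^\tau\ge\tau\|w^\tau\|_{L^1}$. The remaining terms $2J''(\bar u)(v^\tau,w^\tau)+J''(\bar u)(w^\tau)^2$ are estimated by $C'\|v\|_{L^1}\|w^\tau\|_{L^1}$, using the continuity bound $|J''(\bar u)(v_1,v_2)|\le C\|v_1\|_{L^1}\|v_2\|_{L^1}$ that follows from \eqref{E3.5.a}--\eqref{E3.5} and Lemma~\ref{l2l1time}; for $\|v\|_{L^1}$ small enough this is absorbed into $\frac\tau2\|w^\tau\|_{L^1}$, giving $J'(\bar u)v+J''(\bar u)v^2\ge \gamma_k\|z_{\bar u,v^\tau}\|^k_{L^2}\|v^\tau\|^{2-k}_{L^1}+\frac\tau2\|w^\tau\|_{L^1}$. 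It then remains to convert this into $\gamma_k'\|z_{\bar u,v}\|^k_{L^2}\|v\|^{2-k}_{L^1}$ by elementary algebra: for $k=0$ one uses $\|v\|_{L^1}=\|v^\tau\|_{L^1}+\|w^\tau\|_{L^1}$ and $\|w^\tau\|^2_{L^1}\le\alpha_0\|w^\tau\|_{L^1}$; for $k=2$ one uses $z_{\bar u,v}=z_{\bar u,v^\tau}+z_{\bar u,w^\tau}$ together with $\|z_{\bar u,w^\tau}\|^2_{L^2}\le C^2\|w^\tau\|^2_{L^1}\le C^2\alpha_2\|w^\tau\|_{L^1}$ (Lemma~\ref{l2l1time}). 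In each case the positive constant $\gamma_k'$ comes out as the minimum of the two coefficients.

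Part~2 is the delicate one and is where I expect the main obstacle. Since $L_1$ is independent of $y$, the second derivative reduces to $J''(\bar u)(v_1,v_2)=\int_Q[\partial_{yy}L_0-\bar p\,\partial_{yy}f]\,z_{\bar u,v_1}z_{\bar u,v_2}\dx\dt$, hence $|J''(\bar u)v^2|\le C\|z_{\bar u,v}\|^2_{L^2(Q)}$; this is what lets the $L^\infty$-state neighborhood replace the $L^1$-control one. Given $v=u-\bar u$ with $\|y_u-\bar y\|_{L^\infty(Q)}<\alpha_2$ small, I would argue by a dichotomy on the size of $J'(\bar u)v$. If $J'(\bar u)v>(C+\gamma_2)\|z_{\bar u,v}\|^2_{L^2}$, then \eqref{EbA2} holds trivially because $J''(\bar u)v^2\ge -C\|z_{\bar u,v}\|^2_{L^2}$. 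Otherwise $\tau\|w^\tau\|_{L^1}\le J'(\bar u)v\le(C+\gamma_2)\|z_{\bar u,v}\|^2_{L^2}$, and since $\|z_{\bar u,v}\|_{L^\infty}$ is small (comparable to $\|y_u-\bar y\|_{L^\infty}$ by Lemma~\ref{sprep}(ii)), both $\|w^\tau\|_{L^1}$ and, by interpolation against the uniform bound of Remark~\ref{rebound}, $\|w^\tau\|_{L^r}\le\|w^\tau\|_{L^\infty}^{1-1/r}\|w^\tau\|_{L^1}^{1/r}$ are controllably small.

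The crux is then to place $\bar u+v^\tau$ inside the $L^\infty$-state neighborhood and to bring $v^\tau$ into $C^\tau_{\bar u}=D^\tau_{\bar u}\cap G^\tau_{\bar u}$. For the first point I would exploit that $u=\bar u+v^\tau+w^\tau$ differs from $\bar u+v^\tau$ only by $w^\tau$ and invoke the $L^r$-Lipschitz continuity of the control-to-state map (obtained by subtracting the two state equations and applying Lemma~\ref{mainex}, whose constant is uniform in the monotone zero-order coefficient): $\|y_{\bar u+v^\tau}-y_u\|_{L^\infty(Q)}\le C\|w^\tau\|_{L^r}$, which is small, so $\|y_{\bar u+v^\tau}-\bar y\|_{L^\infty(Q)}<\alpha_2$ after shrinking the neighborhood. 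For membership in $G^\tau_{\bar u}$ I would run a nested dichotomy: if $v^\tau\notin G^\tau_{\bar u}$ then $J'(\bar u)v^\tau>\tau\|z_{\bar u,v^\tau}\|_{L^1(Q)}$, and since $\|z_{\bar u,v^\tau}\|^2_{L^2}\le\|z_{\bar u,v^\tau}\|_{L^\infty}\|z_{\bar u,v^\tau}\|_{L^1}$ with a small $L^\infty$-factor, this already forces $J'(\bar u)v^\tau+J''(\bar u)(v^\tau)^2\ge\gamma_2\|z_{\bar u,v^\tau}\|^2_{L^2}$; if $v^\tau\in G^\tau_{\bar u}$ then $v^\tau\in C^\tau_{\bar u}$ and $(\bar A_2)$ applies directly. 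Finally I would reassemble the estimate for $v$ as in Part~1, absorbing the $w^\tau$-cross terms into $\frac\tau2\|w^\tau\|_{L^1}$ and using $z_{\bar u,v}=z_{\bar u,v^\tau}+z_{\bar u,w^\tau}$ to recover $\gamma_2'\|z_{\bar u,v}\|^2_{L^2}$. I expect the bookkeeping of the two nested dichotomies, and in particular the passage from the linearized state $z_{\bar u,v^\tau}$ to the nonlinear state $y_{\bar u+v^\tau}$ needed to enter the $L^\infty$-neighborhood, to be the most delicate and error-prone part of the proof.
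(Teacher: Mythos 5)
Your proposal is correct and follows essentially the same route as the paper, which itself only sketches the proof by referring to \cite[Corollary 14,15]{CJV2022b} (in the spirit of \cite{Casas-Mateos2020}): the decomposition $v=v^\tau+w^\tau$ according to the level set $\{|\int_\Omega \partial \bar H/\partial u_j\,\dx|\le\tau\}$, the first-order bound $J'(\bar u)w^\tau\ge\tau\|w^\tau\|_{L^1(0,T)^m}$ from \eqref{E3.9} and \eqref{sign}, the bilinear bound on $J''(\bar u)$ via Lemma \ref{l2l1time} to absorb the cross terms, and the dichotomy on $J'(\bar u)v$ versus $\|z_{\bar u,v}\|^2_{L^2(Q)}$ (with $\|z\|^2_{L^2}\le\|z\|_{L^\infty}\|z\|_{L^1}$ and Lemma \ref{sprep}(ii)) to handle the $G^\tau_{\bar u}$-constraint in the second item. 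All the steps you flag as delicate (placing $\bar u+v^\tau$ in the $L^\infty$-state neighborhood via the $L^r$-Lipschitz estimate from Lemma \ref{mainex} and interpolation of $\|w^\tau\|_{L^r}$ between $L^1$ and $L^\infty$) check out.
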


The proof goes along 
the lines of  \cite[Corollary 14,15]{CJV2022b}.\\
By Theorem \ref{T3.3}, the conditions \eqref{EbBk} and \eqref{EbA2} constitute sufficient conditions for strict weak 
or strong local optimality. 

Sufficient second order conditions for (local) optimality based on \eqref{sign}-\eqref{ccone} are given 
in \cite{CMR,Casas-Mateos2020, CT16}.
For instance, it was proved in \cite{Casas12,CRT15,CT16} that the condition:
\begin{equation}
    \exists \delta>0, \tau>0 \ \ \mbox{ such that } \ \ J''(\bar u)v^2\ge \delta \|z_{\bar u,v} \|_{L^2(Q)}^2 \ \  \forall v \in G
\label{soclin}
\end{equation}
is sufficient for weak (in the case $G=D^{\tau}_{\bar u}$) or strong (in the case 
$G=E^{\tau}_{\bar u}$) local optimality in the elliptic and parabolic setting.
It was proven in \cite{Casas-Mateos2020}, that \eqref{soclin} with $G=C^\tau_{\bar u}$ is sufficient for 
strong local optimality.
 To obtain and improve stability results, an additional assumption is usually imposed, called the structural assumption. 
Adapted to the problem considered in this paper, it reads
\begin{equation}
        \exists \kappa > 0 \text{ such that }  \ \ \mbox{meas } \Big\{ t \in [0,T] \sth
        \Big\vert \int_\Omega  \frac {\partial \bar H}{\partial u_j}(x,t)\dx \Big\vert   
        \le \varepsilon  \Big\}  \le \kappa\varepsilon\quad \forall \varepsilon > 0, \ j=1,...,m.
\label{structb}
\end{equation} 
It is known that the assumption \eqref{structb} implies that $\bar u$ is of bang-bang type and the existence of a constant 
$\tilde \kappa>0$ such that the following growth property holds:
\begin{equation}
        J'(\bar u)(u-\bar  u)\geq \tilde \kappa \|u-\bar u\|_{L^1(0,T)^m}^2 \; \;  \forall u\in \mathcal U.
\label{growthfirst}
\end{equation}
For a proof see \cite{ASS16}, \cite{OSVE2} or \cite{S2015}. For stability results under these and additional conditions, 
see \cite{CRT15,CT16, CT22, Qui-Wachsmuth2018, CWW, CWW2018, CJV2022b}. %
\begin{Remark}
We compare the items in Assumption \ref{A4} to the ones using \eqref{soclin} and \eqref{structb}.
\begin{enumerate}
\item Assumption \ref{A4}($A_0$) is implied by the structural assumption \eqref{structb} and possible negative curvature 
as in \cite{CWW,CWW2018}. For details see \cite[Theorem~6.3]{DJV2022}.
\item Assumption \ref{A4}($A_1$)  is implied by the structural assumption \eqref{structb} together with
\[
J''(\bar u)(u-\bar u)^2\ge -\tilde \delta \|z_{\bar u,u-\bar u} \|_{L^2(Q)}\|u-\bar u\|_{L^1(0,T)^m}
\] 
for all $u\in \mathcal U$ and any $\tilde \delta> 0$ sufficiently small.
This is clear by Lemma \ref{l2l1time} and \eqref{growthfirst}.
\item By item two in Theorem \ref{equi1}, Assumption \ref{A4}($A_2$) is implied by \eqref{soclin}. 
\end{enumerate}
\end{Remark}

\section{Strong metric subregularity and auxiliary results} \label{SSR}

In this section we study the strong metric subregularity property (SMSr) of the optimality mapping (see, \cite[Section 3I]{DR} or 
\cite[Section 4]{CDK}), beginning with a precise definition of the latter.

\subsection{The optimality mapping}

We begin by defining some mappings used to represent the optimality map in a convenient way. 
This is done by a sight modification of 
\cite[Section 2.1]{DJV2022} and \cite[Section 4.1]{CJV2022b}.
Given the initial data $y_0$ in \eqref{see1}, we define the set
\be \label{EDL}
D(\mathcal L):=\Big\{ y\in W(0,T)\cap L^{\infty}(Q)\Big \vert \ \Big(\frac{d}{dt}+\mathcal A\Big)y \in L^{r}(Q), y(\cdot,0)=y_0 \Big\}.
\ee
To shorten notation, we define $\mathcal L:D(\mathcal L)\to L^{r}(Q)$ by $\mathcal L:=\frac{d}{dt}+\mathcal A$.
Additionally, we define the mapping
$\mathcal L^*:D(\mathcal L^*)\to L^{r}(Q)$ by $\mathcal L^*:=(-\frac{d}{dt}+\mathcal A^*)$, where
\[
D(\mathcal L^*):=\Big\{ p\in W(0,T)\cap L^{\infty}(Q)\Big \vert  \Big(-\frac{d}{dt}+\mathcal A^*\Big)p \in L^{r}(Q), p(\cdot,T)=0 \Big\}.
\]

With the mappings $\mathcal L$ and $\mathcal L^*$, we recast the semilinear state equation \reff{see1}
and the linear adjoint equation \reff{E3.8} in a short way: 
\[
\mathcal L y=\langle u,g\rangle-f(\cdot,\cdot,y),
\]
\[
\mathcal L^* p=\frac {\partial L}{\partial y}(\cdot,\cdot,y_u,u)-p\frac {\partial f}{\partial y}(\cdot,\cdot,y_u)=
\frac {\partial H}{\partial y}(\cdot,\cdot,y_u,p,u).
\]
The normal cone to the set $\mathcal{U}$ at $u \in L^1(0,T)^m$ is defined in the usual way:
\begin{equation*} 
      N_{\mathcal{U}}(u):= \left\{ \begin{array}{cl}
    \big\{\nu \in L^{\infty}(0,T)^m\big\vert  \ \int_0^T \nu (v-u) \dt \le 0 \ \ \forall v \in \mathcal U \big\} & \mbox{ if } u \in \mathcal{U}, \\
    \emptyset  & \mbox{ if } u \not\in \mathcal{U}.
   \end{array}  \right. 
\label{normal}
\end{equation*}
The first order necessary optimality condition for problem \eqref{ocp1}-\eqref{const} in Theorem \ref{pontryagin}
can be recast as 
\begin{align}\label{s6}
\left\{ \begin{array}{cll}
0&=&\mathcal L y+f(\cdot,\cdot,y)-\langle u,g\rangle\\
0&=&\mathcal L^*p- \frac {\partial H}{\partial y}(\cdot,\cdot,y,p,u),\\   
0&\in& 
 \int_\Omega \frac {\partial H}{\partial u}(x, \cdot,y,p,u) \dx + N_{\mathcal U}(u).
\end{array} \right.
\end{align}
For (\ref{s6}) to make sense, a solution $(y,p,u)$ must satisfy $y\in D(\mathcal L)$, $p\in D(\mathcal L^*)$ and 
$u\in\mathcal U$. 
For a local solution $\bar u\in\mathcal U$ of problem (\ref{ocp1})-(\ref{const}), by Theorem \ref{pontryagin}, 
the triple $(y_{\bar u},p_{\bar u},\bar u)$ is a solution of (\ref{s6}).
We define the sets
\begin{align} \label{EYZ}
	\mathcal Y:=D(\mathcal L)\times D(\mathcal L^*)\times\mathcal U\quad\text{and}\quad \mathcal Z:=
L^2(Q)\times L^2(Q)\times L^\infty(0,T)^m,
\end{align}
and consider the set-valued mapping $\Phi:\mathcal Y\twoheadrightarrow\mathcal Z$ given by
\begin{align}\label{optmapping}
\Phi\left( \begin{array}{c}
y \\
p \\
u
\end{array} \right) :=\left( \begin{array}{c}
\mathcal Ly + f(\cdot,\cdot,y)-\langle u,g\rangle \\
\mathcal L^*p- \frac {\partial H}{\partial y}(\cdot,\cdot,y,p,u) \\
 \int_\Omega \frac{\partial H}{\partial u}(x,\cdot,y,p,u) \dx + N_{\mathcal U}(u)
\end{array} \right).
\end{align}
With the abbreviation $\psi := (y,p,u)$, the system (\ref{s6}) can be rewritten as the inclusion $0\in\Phi(\psi)$.
Therefore, the mapping $\Phi:\mathcal Y\twoheadrightarrow\mathcal Z$ is called the {\em optimality mapping} of the optimal 
control problem (\ref{ocp1})-(\ref{const}).
Our goal is to study the stability of system (\ref{s6}), or equivalently, the stability of the solutions of the inclusion 
$0\in\Phi(\psi)$ under perturbations. 
For elements $\xi,\eta\in L^r(Q)$ and $\rho\in L^\infty(0,T)^m$ we consider the perturbed system
\begin{align}\label{s1per}
\left\{ \begin{array}{cll}
\xi&=&-\mathcal Ly+f(\cdot,\cdot,y) - \langle g,u\rangle,\\
\eta&=&-\mathcal L^*p+\frac {\partial H}{\partial y}(\cdot,\cdot,y,p,u),\\
\rho&\in& \int_\Omega \frac{\partial H}{\partial u}(x,\cdot,y,p) \dx+ N_{\mathcal U}(u),
\end{array} \right.
\end{align}
or equivalently, the inclusion $\zeta \in \Phi(\psi)$, where $\zeta := (\xi,\eta,\rho) \in \Z$.


The next theorem is a consequence of the fact that \reff{s1per} represents the Pontryagin maximum principle for an
appropriately perturbed version of problem (\ref{ocp1})-(\ref{const}), for which  a solution exists by the same argument 
as in the beginning of Section \ref{SProblem}.

\begin{Theorem}
For any perturbation $\zeta:=(\xi,\eta,\rho)\in L^r(Q)\times L^r(Q)\times L^\infty(0,T)^m$ there exists 
a triple $\psi := (y,p,u)\in \mathcal Y$ such that $\zeta\in \Phi(\psi)$.
\end{Theorem}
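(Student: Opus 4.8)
\noindent The plan is to realize the perturbed system \reff{s1per} as the Pontryagin optimality system of a suitably perturbed instance of \Pb, and then to invoke existence of a minimizer for that instance together with the maximum principle of Theorem \ref{pontryagin}. Given $\zeta=(\xi,\eta,\rho)\in L^r(Q)\times L^r(Q)\times L^\infty(0,T)^m$, I would consider the auxiliary problem
\[
\min_{u\in\Uad}\ J_\zeta(u):=\int_Q\big[L_0(x,t,y)+\eta(x,t)\,y(x,t)+\langle L_1(x,t,y),u(t)\rangle\big]\dx\dt-\int_0^T\langle\rho(t),u(t)\rangle\dt,
\]
subject to the perturbed state equation
\[
\frac{\partial y}{\partial t}+\mathcal A y+f(x,t,y)=\langle g(x),u(t)\rangle+\xi(x,t)\quad\text{in }Q,
\]
with the boundary and initial data of \reff{see1}. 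The three perturbations enter exactly where needed: $\xi$ is an $L^r(Q)$ source in the state equation, the term $\eta y$ (linear in $y$, coefficient in $L^r(Q)$) contributes the summand $\eta$ to the right-hand side of the adjoint equation, and the term $-\langle\rho,u\rangle$ (affine in $u$) contributes $-\rho$ to the reduced gradient entering the variational inequality. Computing the optimality system of $J_\zeta$ via Theorem \ref{T3.1} and the Hamiltonian then reproduces \reff{s1per}.

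First I would check admissibility of the auxiliary problem within the present framework. The source $\xi$ is harmless: replacing $f$ by $f-\xi$ leaves the monotonicity $\partial f/\partial y\ge 0$ intact and only alters $f(\cdot,\cdot,0)$ by an $L^r(Q)$ term, so Theorem \ref{estsemeq} still yields for every $u\in\Uad$ a unique state $y_u\in W(0,T)\cap L^\infty(Q)$ depending weakly continuously on $u$. The extra objective contributions are linear in $y$ with an $L^r(Q)$ coefficient and affine in $u$ with an $L^\infty(0,T)^m$ coefficient, hence integrable, and they leave the differentiability structure of \Pb untouched. Consequently the existence argument recalled at the beginning of Section \ref{SProblem}---affine dependence on the control, convexity and compactness of $\mathcal U$, and the weak continuity of Theorem \ref{estsemeq}---applies and furnishes a global minimizer $u\in\Uad$ of $J_\zeta$.

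Next I would apply Theorem \ref{pontryagin} to this minimizer, obtaining $y,p\in W(0,T)\cap L^\infty(Q)$ that solve the perturbed state and adjoint equations and satisfy the perturbed variational inequality, i.e. precisely \reff{s1per}. To conclude $\psi:=(y,p,u)\in\mathcal Y$ it remains to verify the regularity memberships: $y\in D(\mathcal L)$ because $\mathcal L y=\langle u,g\rangle+\xi-f(\cdot,\cdot,y)\in L^r(Q)$ by Assumption \ref{exist.2} and the boundedness of $y$; $p\in D(\mathcal L^*)$ because the adjoint source lies in $L^r(Q)$, so the time-reversed analogue of the second item of Lemma \ref{mainex} (equivalently, the regularity asserted for $p_u$ in Theorem \ref{T3.1}) gives $p\in W(0,T)\cap L^\infty(Q)$ with $\mathcal L^*p\in L^r(Q)$; and $u\in\mathcal U$ by construction. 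Hence $\zeta\in\Phi(\psi)$.

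The main obstacle is the bookkeeping of the correspondence between the three perturbation components and the three lines of the optimality system: one must confirm that the linear-in-$y$ term generated by $\eta$ and the affine-in-$u$ term generated by $\rho$ carry the correct signs so that the computed adjoint equation and variational inequality match \reff{s1per} exactly. A secondary technical point is that the coefficient $\eta$ is merely $L^r(Q)$, not $L^\infty$, so folding $\eta y$ literally into $L_0$ would violate the local-boundedness part of Assumption \ref{exist.2}; the resolution is to note that $\eta y$ affects only the \emph{linear} part of the integrand, so the reduced functional remains of class $C^2$ and the adjoint source stays in $L^r(Q)$, which is exactly the regularity handled by Lemma \ref{mainex}. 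This is what makes both the existence argument and the maximum principle applicable despite the reduced regularity of the perturbation.
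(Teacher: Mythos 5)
Your proposal is correct and takes essentially the same route as the paper, whose entire proof is the one-sentence observation that \reff{s1per} represents the Pontryagin maximum principle for an appropriately perturbed version of problem \Pb, for which a solution exists by the same argument as at the beginning of Section \ref{SProblem}. Your explicit construction of the perturbed objective (the $\eta y$ and $-\langle\rho,u\rangle$ terms alongside the source $\xi$) and your observation that $\eta\in L^r(Q)$ prevents folding $\eta y$ literally into $L_0$ under Assumption \ref{exist.2}---resolved via linearity in $y$ and Lemma \ref{mainex}---merely supply details the paper leaves implicit.
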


Given a metric space $(\mathcal X,d_{\mathcal X})$, we denote by $B_{\mathcal X}(c,\alpha)$ the closed ball of 
 radius $\alpha>0$ centered at $c\in\mathcal X$.
The spaces $\mathcal Y$ and $\mathcal Z$, introduced in \reff{EYZ}, are endowed with the metrics 
\begin{align} \label{Enzeta}
	d_{\mathcal Y}(\psi_1,\psi_2)&:=\|   y_1-y_2\|   _{L^2(Q)}+\|   p_1-p_2\|   _{L^2(Q)}+\|   u_1-u_2\|   _{L^1(0,T)^m},\\
	\nonumber d_{\mathcal Z}(\zeta_1,\zeta_2)&:=\|\xi_1-\xi_2\| _{L^{2}(Q)}+\|\eta_1-\eta_2\|_{L^{2}(Q)}+
      \|\rho_1-\rho_2\|_{L^\infty(0,T)^m},
\end{align}
where $\psi_i=(y_i,p_i,u_i)$ and $\zeta_i=(\xi_i,\eta_i,\rho_i)$, $i\in\{1,2\}$.
Further on, we denote $\bar\psi:=(y_{\bar u},p_{\bar u},\bar u)$. \\

The following extension of the previous theorem can be proved along the lines of \cite[Theorem 4.12]{DJV2022}.

\begin{Theorem}
Let Assumption \ref{A4}$(A_0)$ hold. For each $\varepsilon > 0$ there exists $\delta > 0$ 
such that for every $\zeta \in B_{\mathcal Z} (0;\delta)$
there exists $\psi \in B_{\mathcal Y}(\bar \psi; \varepsilon)$ satisfying the inclusion $\zeta \in \Phi(\psi)$.
\end{Theorem}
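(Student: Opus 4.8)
The plan is to realize the inclusion $\zeta\in\Phi(\psi)$ as the Pontryagin system of a perturbed copy of \Pb and then to pin down a solution of that system near $\bar\psi$ by exploiting the quadratic growth provided by Assumption \ref{A4}$(A_0)$. Writing $\zeta=(\xi,\eta,\rho)$, I attach to $\zeta$ the problem $(P_\zeta)$ of minimizing
\[
J_\zeta(u):=\int_Q L(x,t,y_u^\zeta,u)\dx\dt-\int_Q \eta\, y_u^\zeta\dx\dt-\int_0^T\langle\rho(t),u(t)\rangle\dt
\]
over $u\in\Uad$, where $y_u^\zeta$ solves the $\xi$-perturbed state equation $\mathcal L y+f(\cdot,\cdot,y)=\langle g,u\rangle+\xi$. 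As already observed in the paragraph preceding the previous theorem, with the signs chosen as above the first-order system of $(P_\zeta)$ is exactly \eqref{s1per}; the added linear-in-state and linear-in-control terms leave Assumptions \ref{exist.1}--\ref{exist.2} in force.

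First I would fix $\varepsilon>0$ and a radius $r\in(0,\varepsilon_0)$, with $\varepsilon_0$ from Theorem \ref{T3.3}(3), and minimize $J_\zeta$ over the $L^1$-ball $V_r:=\{u\in\Uad:\|u-\bar u\|_{L^1(0,T)^m}\le r\}$. The set $V_r$ is convex, bounded and strongly closed, hence weakly compact in the reflexive space $L^r(0,T)^m$, and since weak convergence of controls forces strong convergence of states by \eqref{semilinweak}, the functional $J_\zeta$ is weakly continuous on $V_r$; thus a minimizer $u_\zeta$ exists.

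The heart of the proof is to show that $u_\zeta$ lies in the \emph{interior} of $V_r$ as soon as $\delta$ is small, so that it is a genuine weak local minimizer of $(P_\zeta)$. Starting from $J_\zeta(u_\zeta)\le J_\zeta(\bar u)$ and replacing $J_\zeta$ by the unperturbed $J$ at the price of controllable errors, I obtain
\[
J(u_\zeta)-J(\bar u)\le C\delta\,\|u_\zeta-\bar u\|_{L^1(0,T)^m}+C\delta,
\]
where the linear term gathers the $\eta$- and $\rho$-contributions, using $\|y_{u_\zeta}^\zeta-y_{\bar u}^\zeta\|_{L^2(Q)}\le C\|u_\zeta-\bar u\|_{L^1(0,T)^m}$ from Lemma \ref{l2l1time}, while the additive constant absorbs the state shift $\|y_u^\zeta-y_u\|_{L^\infty(Q)}\le C\|\xi\|_{L^r(Q)}$, which follows from $f_y\ge 0$ (Assumption \ref{exist.2}) and Lemma \ref{mainex}(2). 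Invoking the growth \eqref{E3.12} of Theorem \ref{T3.3}(3) (available since $(A_0)$ implies $(B_0)$), and writing $s:=\|u_\zeta-\bar u\|_{L^1(0,T)^m}\le r<\varepsilon_0$, this yields
\[
\tfrac{\kappa_0}{2}\,s^2\le C\delta\,s+C\delta,
\]
so that $s=O(\sqrt\delta)\to 0$ as $\delta\to 0$. Hence $s<r$ for $\delta$ small, $u_\zeta$ is interior, and Theorem \ref{pontryagin} applied to $(P_\zeta)$ delivers $\psi_\zeta:=(y_{u_\zeta}^\zeta,p_\zeta,u_\zeta)$ with $\zeta\in\Phi(\psi_\zeta)$.

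It remains to verify $d_{\mathcal Y}(\psi_\zeta,\bar\psi)\to 0$. The control component equals $s$; the state component is bounded by $\|y_{u_\zeta}^\zeta-y_{u_\zeta}\|_{L^2(Q)}+\|y_{u_\zeta}-\bar y\|_{L^2(Q)}\le C\delta+Cs$; and the adjoint component vanishes because $p_\zeta-p_{\bar u}$ solves a linear adjoint equation with nonnegative zeroth-order coefficient whose right-hand side differs from the reference one by quantities controlled by $\|y_{u_\zeta}^\zeta-\bar y\|$, $\|u_\zeta-\bar u\|_{L^1(0,T)^m}$ and $\|\eta\|_{L^r(Q)}$, so that Lemma \ref{mainex} gives $\|p_\zeta-p_{\bar u}\|_{L^2(Q)}\to 0$. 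Choosing $\delta$ small then forces $d_{\mathcal Y}(\psi_\zeta,\bar\psi)<\varepsilon$. The main obstacle is exactly the interiority step: the $\xi$-perturbation produces the additive $C\delta$ term, not multiplied by $s$, so one can only expect $\sqrt\delta$-closeness rather than Lipschitz dependence, and it is precisely the quadratic growth coming from $(A_0)$ that is strong enough to absorb both the linear and the constant perturbation terms and to close the argument.
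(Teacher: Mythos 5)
Your proof is correct and follows essentially the same route as the paper's, which establishes this theorem ``along the lines of'' the elliptic case \cite[Theorem 4.12]{DJV2022}: interpret \eqref{s1per} as the Pontryagin system of a suitably perturbed problem, minimize the perturbed objective over an $L^1$-ball around $\bar u$ (existence by weak compactness of the ball and the weak-strong continuity \eqref{semilinweak}), and use the quadratic growth \eqref{E3.12} with $k=0$ (available since $(A_0)$ implies $(B_0)$, so Theorem \ref{T3.3}(3) applies) to force the localized minimizer into the interior of the ball, whence the perturbed first-order system holds and the closeness of state and adjoint follows from the linear-equation estimates. One small repair: since $d_{\mathcal Z}$ measures $\xi$ in $L^2(Q)$ while Assumption \ref{Ape} keeps $\|\xi\|_{L^r(Q)}$ merely bounded (not small), the additive $C\delta$ term in your interiority estimate should be obtained from $\|y_u^\zeta-y_u\|_{L^2(Q)}\le C_2\|\xi\|_{L^2(Q)}$ (estimate \eqref{E4.3.2}) combined with the uniform Lipschitz continuity of $L$ in $y$ on the bounded range guaranteed by \eqref{E3.15}, rather than from the $L^\infty$-bound $C\|\xi\|_{L^r(Q)}$ you quote, which is bounded but need not tend to zero with $\delta$.
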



\subsection{Strong metric subregularity: main result} \label{S_SMsR}

This subsection contains one of the main results in this paper: estimates of the difference between the solutions 
of the perturbed 
system \reff{s1per} and a reference solution of the unperturbed one, \reff{s6}, by the size of the perturbations. 
This will be done using the notion of {\em strong metric subregularity} recalled in the next paragraphs.

\begin{Definition}\label{Dsmsr}
Let $\bar \psi$ satisfy $0 \in \Phi(\bar \psi)$.
We say that the optimality mapping $\Phi:\mathcal Y\twoheadrightarrow \mathcal Z$ is 
{\em strongly metrically subregularity} (SMsR) at $(\bar\psi,0)$ if there exist positive numbers $\alpha_1,\alpha_2$ 
and $\kappa$ such that 
	\begin{align*}
	d_{\mathcal Y}(\psi,\bar\psi)\le\kappa d_{\mathcal Z}(\zeta,0)
	\end{align*}
	for all $\psi\in B_{\mathcal Y}(\bar \psi\text{; }\alpha_1)$ and $\zeta\in B_{\mathcal Z}(0\text{; }\alpha_2)$ 
satisfying $\zeta\in\Phi(\psi)$.
\end{Definition}

Notice that applying the definition with $\zeta = 0$ we obtain that $\bar \psi$ is the unique solution 
of the inclusion $0 \in \Phi(\psi)\cap B_{\mathcal Y}(\bar \psi\text{; }\alpha_1)$. In particular, $\bar u$ is a strict  
local minimizer for problem \eqref{ocp1}-\eqref{const}.

In the next assumption we introduce a restriction on the set of admissible perturbations, call it $\Gamma$, which is
valid for the  remaining part of this section.

\begin{Assumption}  \label{Ape}
For a fixed positive constant $C_{pe}$, the admissible perturbation $\zeta = (\xi,\eta,\rho) \in \Gamma \subset \mathcal Z$ 
satisfy the restriction 
\begin{equation} \label{pertbound}
         \|\xi \|_{L^r(Q)}\leq C_{pe} .
\end{equation}
\end{Assumption}

For any $u \in \mathcal U$ and $\zeta \in \Gamma$ we denote by $(y_u^\zeta, p_u^\zeta,u)$ a solution 
of the first two equations in \reff{s1per}.
Using \reff{semilin} in Theorem~\ref{estsemeq} we obtain the existence of a constant $K_y$ such that
\begin{equation} \label{E3.15}
        \|y_u^\zeta\| _{L^\infty(Q)}  \leq K_y \quad \forall u \in \mathcal U 
         \;\; \forall \zeta \in \Gamma.
\end{equation}
Then for every $u \in {\mathcal U}$, every admissible disturbance $\zeta$, and the corresponding solution $y$ of
the first equation in \reff{s1per} it holds that $(y_u^\zeta(x,t),u(t)) \in R := [- K_y , K_y] \times [u_a, u_b]^m$.

\begin{Remark} \label{Rlip}
We apply the local properties in  Assumption \ref{exist.2} to the interval $[-K_y, K_y]$, and denote by $\bar C$ 
a constant that majorates the bounds and the Lipschitz constants of $f$, $L_0$ and $L_1$
and their first and second derivatives with respect to $y \in [-K_y, K_y]$.
\end{Remark}

By increasing the constant $K_y$, if necessary, we may also estimate the adjoint state:
\begin{equation*}
        \|p_u^\zeta\| _{L^\infty(Q)}  \leq K_y (1 + \| \eta \|_{L^r(Q)}) \quad \forall u \in \mathcal U 
         \;\; \forall \zeta \in \Gamma.
\end{equation*}
This follows from Theorem \ref{mainex} with $\alpha = - \frac{\partial f}{\partial y}(x,t,y_u^\zeta)$ and with
$\frac{\partial L}{\partial y}(x,t,y_u^\zeta,u)$ at the place of $u$.

The main result of this the paper follows.

\begin{Theorem}\label{Ssr}
Let assumption 
\ref{A4}(B$_0$) be fulfilled for the reference solution 
$\bar \psi = (\bar y,\bar p, \bar u)$ of $0 \in \Phi(\psi)$.
Then the mapping $\Phi$ is strongly metrically subregular at 
$(\bar \psi,0)$. More precisely, there exist $\alpha_n, \kappa_n > 0$ such that 
for all $\psi \in \mathcal Y$ with $\| u-\bar u\|_{L^1(0,T)^m} \leq \alpha_n$ and $\zeta\in \Gamma$ satisfying $\zeta\in\Phi(\psi)$,
the following inequality is satisfied:
\begin{align}
& \!\!\!\!\!\!\!\!\!\!\!\! \label{EEy}  \| \bar u-u \|_{L^1(Q)}+\|y_{\bar u}-y^{\zeta}_u\|_{L^2(Q)}+\| p_{\bar u}-p^{\zeta}_u\|_{L^2(Q)}
\le \kappa_n \Big(\max_{1\leq j\leq m}\|\rho_j \|_{L^{\infty}(0,T)}+\|\xi\|_{L^2(Q)}+\|\eta\| _{L^2(Q)}\Big).
\end{align}
\end{Theorem}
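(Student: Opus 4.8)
The plan is to prove \eqref{EEy} in two stages: first establish the control bound $\|u-\bar u\|_{L^1(0,T)^m}\le \kappa\,(\text{perturbations})$, and then use it, together with the linear a priori estimates of Section \ref{Sprel}, to control the state and adjoint differences. Fix $\psi=(y,p,u)$ with $\zeta=(\xi,\eta,\rho)\in\Phi(\psi)$; unravelling the inclusion means that $y=y_u^\zeta$ and $p=p_u^\zeta$ solve the first two equations of \eqref{s1per}, while the third line is the variational inequality $\int_0^T\int_\Omega\frac{\partial H}{\partial u}(x,t,y_u^\zeta,p_u^\zeta)\dx\,(v-u)\dt \ge \int_0^T\rho(v-u)\dt$ for all $v\in\Uad$. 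I will also use the auxiliary \emph{unperturbed} state $y_u$ and adjoint $p_u$ associated with the same control $u$ (as in \eqref{E3.4} and \eqref{E3.6}); by Remark \ref{rebound} and Lemma \ref{mainex}, together with the embedding $W(0,T)\hookrightarrow C([0,T];L^2(\Omega))$, these are bounded in $L^\infty(Q)$ uniformly over $u\in\Uad$.

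For the control bound I combine a lower and an upper estimate of $J'(u)(u-\bar u)$. From below, Assumption \ref{A4}($B_0$) and Lemma \ref{good}(3) give $\bar\alpha_0,\bar\gamma_0>0$ with $J'(u)(u-\bar u)\ge\bar\gamma_0\|u-\bar u\|_{L^1(0,T)^m}^2$ whenever $\|u-\bar u\|_{L^1(0,T)^m}<\bar\alpha_0$; I set $\alpha_n\le\bar\alpha_0$. From above, testing the perturbed variational inequality with $v=\bar u$ yields $\int_Q\langle p_u^\zeta g+L_1(\cdot,\cdot,y_u^\zeta),u-\bar u\rangle\dx\dt\le\int_0^T\rho(u-\bar u)\dt$; since by \eqref{E3.4} the left-hand side equals $J'(u)(u-\bar u)$ minus the gradient-perturbation term $\int_Q\langle(p_u-p_u^\zeta)g+(L_1(\cdot,\cdot,y_u)-L_1(\cdot,\cdot,y_u^\zeta)),u-\bar u\rangle\dx\dt$, I obtain $J'(u)(u-\bar u)\le\int_0^T\rho(u-\bar u)\dt + (\text{gradient-perturbation term})$. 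The first summand is at most $\max_j\|\rho_j\|_{L^\infty(0,T)}\|u-\bar u\|_{L^1(0,T)^m}$. The decisive point is to bound the gradient-perturbation term by $C(\|\xi\|_{L^2(Q)}+\|\eta\|_{L^2(Q)})\|u-\bar u\|_{L^1(0,T)^m}$: each spatial pairing $\int_\Omega(\cdot)g_j\dx$ is estimated by $\|g_j\|_{L^2(\Omega)}$ times an $L^2(\Omega)$-norm, so the factor $\|u-\bar u\|_{L^1(0,T)^m}$ appears upon integrating in time, and it remains to show $\|y_u-y_u^\zeta\|_{L^\infty(0,T;L^2(\Omega))}+\|p_u-p_u^\zeta\|_{L^\infty(0,T;L^2(\Omega))}\le C(\|\xi\|_{L^2(Q)}+\|\eta\|_{L^2(Q)})$. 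The state difference solves a linear equation with source $-\xi$ and nonnegative zeroth order coefficient (by $f_y\ge0$, Assumption \ref{exist.2}), so Lemma \ref{mainex} applies. For the adjoint difference I write the equation for $p_u-p_u^\zeta$ keeping $(p_u-p_u^\zeta)f_y(y_u^\zeta)$ on the left (its coefficient is $\ge0$) and moving the cross term as $p_u(f_y(y_u^\zeta)-f_y(y_u))$ to the right; since $p_u$ is uniformly bounded, this source is of order $|y_u-y_u^\zeta|+|\eta|$ in $L^2(Q)$, so no spurious dependence on $\|\eta\|_{L^r(Q)}$ arises. Dividing the resulting inequality $\bar\gamma_0\|u-\bar u\|_{L^1(0,T)^m}^2\le[\max_j\|\rho_j\|_{L^\infty(0,T)}+C(\|\xi\|_{L^2(Q)}+\|\eta\|_{L^2(Q)})]\,\|u-\bar u\|_{L^1(0,T)^m}$ by $\|u-\bar u\|_{L^1(0,T)^m}$ gives the control bound.

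For the second stage, $y_u^\zeta-\bar y$ solves a linear equation with source $\langle u-\bar u,g\rangle+\xi$ and nonnegative coefficient; splitting by linearity and applying Lemma \ref{l2l1time} to the $\langle u-\bar u,g\rangle$ part and Lemma \ref{mainex} to the $\xi$ part yields $\|y_u^\zeta-\bar y\|_{L^2(Q)}\le C(\|u-\bar u\|_{L^1(0,T)^m}+\|\xi\|_{L^2(Q)})$. For $p_u^\zeta-\bar p$ I again absorb $(p_u^\zeta-\bar p)f_y(y_u^\zeta)$ on the left and decompose the right-hand side into (i) Lipschitz terms of order $|y_u^\zeta-\bar y|$, (ii) the term $\langle\frac{\partial L_1}{\partial y}(\cdot,\cdot,\bar y),u-\bar u\rangle$, and (iii) $\eta$; parts (i) and (iii) are handled in $L^2(Q)$, while part (ii), which has the same $\langle\,\text{profile}(x,t),v(t)\,\rangle$ structure as the source in Lemma \ref{l2l1time}, is estimated in $L^\infty(0,T;L^2(\Omega))$ by $C\|u-\bar u\|_{L^1(0,T)^m}$ via the same Gronwall argument (now backward in time). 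This gives $\|p_u^\zeta-\bar p\|_{L^2(Q)}\le C(\|y_u^\zeta-\bar y\|_{L^2(Q)}+\|u-\bar u\|_{L^1(0,T)^m}+\|\eta\|_{L^2(Q)})$. Substituting the control bound into both and summing the three estimates produces \eqref{EEy} with a suitable $\kappa_n$.

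The main obstacle is the control bound, and within it the gradient-perturbation term: one must make it linear in $\|\xi\|_{L^2(Q)}$ and $\|\eta\|_{L^2(Q)}$ (not merely in the larger $L^r$-norms, which are not controlled by the $\mathcal Z$-metric) and carry exactly one factor $\|u-\bar u\|_{L^1(0,T)^m}$. This is precisely what forces the monotonicity device ($f_y\ge0$, keeping the uniformly bounded $p_u$ in the source) and the systematic use of the fixed-spatial-distribution structure $\langle g,\cdot\rangle$ to convert $L^2(\Omega)$ state/adjoint bounds into the $L^1$-in-time control norm. A secondary difficulty is securing the genuinely Lipschitz (rather than H\"older) adjoint estimate in part (ii) above, which is why the $L^1$-in-time estimate of Lemma \ref{l2l1time}, extended to time-dependent spatial profiles, is indispensable.
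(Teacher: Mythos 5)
Your proposal is correct, and its skeleton coincides with the paper's own proof: the lower bound $J'(u)(u-\bar u)\ge\bar\gamma_0\|u-\bar u\|_{L^1(0,T)^m}^2$ from Lemma \ref{good}(3) under ($B_0$), testing the perturbed variational inequality with $v=\bar u$, a bound on the Hamiltonian-gradient perturbation term that is linear in $\|\xi\|_{L^2(Q)}+\|\eta\|_{L^2(Q)}$ and carries exactly one factor $\|u-\bar u\|_{L^1(0,T)^m}$, and then propagation to the state and adjoint via Lemma \ref{l2l1time} and triangle inequalities. Where you genuinely diverge is in how that perturbation term is estimated. The paper delegates it to Lemma \ref{Impest}(2), whose proof integrates by parts against the linearized states $z_{u,v}$, $z^\zeta_{u,v}$ and invokes the perturbed-linearization estimates of Lemma \ref{T4.1} together with the $L^s$-duality bounds ($s<\frac{n+2}{n}$) of Lemma \ref{estLs}. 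You instead bound $\|y_u-y_u^\zeta\|_{L^\infty(0,T;L^2(\Omega))}$ and $\|p_u-p_u^\zeta\|_{L^\infty(0,T;L^2(\Omega))}$ directly by energy/Gronwall arguments uniform in the nonnegative zeroth-order coefficient (keeping $f_y\ge 0$ on the left and the uniformly bounded $p_u$ in the source, so only $\|\eta\|_{L^2(Q)}$ enters, consistent with the $\mathcal Z$-metric), and then pair spatially with $g_j$ to extract $\|u-\bar u\|_{L^1(0,T)^m}$. This is more elementary — no $L^s$ machinery — but it is tailored to the time-only control structure and, unlike Lemma \ref{Impest}, does not also yield the $\|z_{u,v}\|_{L^2(Q)}$-weighted estimate \eqref{esti2} that the paper reuses in Theorem \ref{SsHr}. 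A further point in your favor: in the final adjoint step the paper simply writes $\|p_{\bar u}-p_u\|_{L^2(Q)}\le C_2\|y_{\bar u}-y_u\|_{L^2(Q)}$, which for $L_1$ depending on $y$ glosses over the cross term $\langle\frac{\partial L_1}{\partial y}(x,t,\bar y),u-\bar u\rangle$ (compare the exponent $\frac{1}{r}$ appearing in Lemma \ref{L3.2}); your explicit treatment of this term via the time-dependent-profile extension of Lemma \ref{l2l1time} — whose Gronwall proof indeed carries over verbatim to profiles bounded in $L^\infty(0,T;L^2(\Omega))$ — repairs this step while preserving the Lipschitz rate.
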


To prove Theorem \ref{Ssr}, we need some technical lemmas.

\begin{Lemma}(\cite[Lemma 18]{CJV2022b}) \label{T4.1}
Let $u\in \mathcal U$ be given and $v\in L^r(0,T)^m$, $\xi,\eta\in L^r(Q)$. 
Consider solutions $y_u$, $p_u$, $z_{\bar u, v}$ and $y_u^\xi$,$p_u^\eta$, $z_{\bar u,v}^\xi$ of the equations
\begin{align}\label{s1}
\left\{ \begin{array}{cll}
\mathcal Ly+f(\cdot,\cdot,y)&=&\langle g,u\rangle,\\
\mathcal L^*p-\frac {\partial H}{\partial y}(\cdot,\cdot,y_u,p,u)&=&0,\\
\mathcal L_0z+f_y(\cdot,\cdot,y_u)z&=&\langle g,v\rangle,
\end{array} \right. \
\left\{ \begin{array}{cll}
\mathcal Ly+f(\cdot,\cdot,y)&=&\langle g,u\rangle+\xi,\\
\mathcal L^*p-\frac {\partial H}{\partial y}(\cdot,\cdot,y_u^\xi,p,u)&=&\eta,\\
\mathcal L_0z+f_y(\cdot,\cdot,y^\xi_u)z&=&\langle g,v\rangle,
\end{array} \right.
\end{align}
Here, $\mathcal L_0$ is defined as $\mathcal L$, but on the domain \reff{EDL} with $y_0 = 0$. Then for every $ s\in [1,\frac{n+2}{n})$
there exist constants $K_s,K_2,R_2>0$, independent of $\zeta \in \Gamma$, 
such that the following inequalities hold
\begin{align}
&\|y^\xi_u - y_u\| _{L^2(Q)} \le  C_2 \|\xi\|_{L^2(Q)},\label{E4.3.2}\\
&\| z^\xi_{u,v} - z_{u,v}\|_{L^2(Q)} \le K_2
\|\xi\|_{L^{r}(Q)}\|z_{u,v}\| _{L^2(Q)},\label{E4.4}\\
 &\| z^\xi_{u,v} - z_{u,v}\|_{L^s(Q)} \le K_s
 \|\xi\|_{L^{2}(Q)}\|z_{u,v}\| _{L^2(Q)},\label{E4.4b}\\
&\| p^\eta_u - p_u\|_{2}\le R_2(\|\xi\|_{L^2(Q)}+\| \eta\| _{L^2(Q)}),\label{E4.5.a2}
\end{align}
where $ C_2 $ is the constant given in \eqref{wl2}.
\end{Lemma}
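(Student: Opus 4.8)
The plan is to treat all four inequalities by the same device: for each pair of equations in \reff{s1}, I would subtract them, use a first-order Taylor expansion of $f$ (and of $\frac{\partial L}{\partial y}$) to write the difference as the solution of a \emph{linear} parabolic equation whose zeroth-order coefficient is a nonnegative $L^\infty(Q)$-function, and then invoke the coefficient-independent a priori bounds of Lemma \ref{mainex}. Concretely, for \reff{E4.3.2} set $w:=y_u^\xi-y_u$; subtracting the state equations and applying the mean value theorem gives
\[
\mathcal L w + \alpha w = \xi, \qquad \alpha:=\int_0^1 f_y(\cdot,\cdot,y_u+\theta w)\,d\theta\ge 0,
\]
so Lemma \ref{mainex}(1) yields $\|w\|_{L^2(0,T;H^1_0(\Omega))}\le C_2\|\xi\|_{L^2(Q)}$, whence \reff{E4.3.2}. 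Viewing the same $w$ with $\xi\in L^r(Q)$, Lemma \ref{mainex}(2) additionally gives the $L^\infty$-bound $\|w\|_{L^\infty(Q)}\le C_r\|\xi\|_{L^r(Q)}$, which is the ingredient that powers \reff{E4.4}.

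For \reff{E4.4}, I would set $\omega:=z_{u,v}^\xi-z_{u,v}$; subtracting the linearized equations and adding and subtracting $f_y(\cdot,\cdot,y_u^\xi)z_{u,v}$ produces
\[
\mathcal L_0\omega + f_y(\cdot,\cdot,y_u^\xi)\,\omega = -\big[f_y(\cdot,\cdot,y_u^\xi)-f_y(\cdot,\cdot,y_u)\big]z_{u,v}.
\]
Since the coefficient is again nonnegative, Lemma \ref{mainex}(1) applies; bounding the right-hand side in $L^2(Q)$ by the Lipschitz estimate $|f_y(\cdot,\cdot,y_u^\xi)-f_y(\cdot,\cdot,y_u)|\le \bar C|w|$ (Remark \ref{Rlip}) and the $L^\infty$-bound on $w$ gives $\|\omega\|_{L^2(Q)}\le C_2\bar C\,\|w\|_{L^\infty(Q)}\|z_{u,v}\|_{L^2(Q)}\le K_2\|\xi\|_{L^r(Q)}\|z_{u,v}\|_{L^2(Q)}$ with $K_2:=C_2\bar CC_r$. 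Estimate \reff{E4.5.a2} is analogous: writing $q:=p_u^\eta-p_u$ and using $\frac{\partial H}{\partial y}=\frac{\partial L}{\partial y}-p\,f_y$, the same add-and-subtract manipulation gives a time-reversed linear equation for $q$ with nonnegative coefficient $f_y(\cdot,\cdot,y_u^\xi)$ and right-hand side $\big[\frac{\partial L}{\partial y}(\cdot,\cdot,y_u^\xi,u)-\frac{\partial L}{\partial y}(\cdot,\cdot,y_u,u)\big]-p_u\big[f_y(\cdot,\cdot,y_u^\xi)-f_y(\cdot,\cdot,y_u)\big]+\eta$; its first two terms are controlled in $L^2(Q)$ by $\bar C(1+K_y)\|w\|_{L^2(Q)}$ via Remark \ref{Rlip} and $\|p_u\|_{L^\infty(Q)}\le K_y$, so \reff{E4.3.2} and Lemma \ref{mainex}(1) deliver \reff{E4.5.a2}.

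The genuinely delicate estimate is \reff{E4.4b}, where only $\|\xi\|_{L^2(Q)}$ is available and $\omega$ is measured in $L^s(Q)$. Here I would no longer have the $L^\infty$-bound on $w$, only $\|w\|_{L^2(0,T;H^1_0(\Omega))}+\|w\|_{L^\infty(0,T;L^2(\Omega))}\le C\|\xi\|_{L^2(Q)}$. The plan is first to invoke the parabolic Sobolev embedding $L^2(0,T;H^1_0(\Omega))\cap L^\infty(0,T;L^2(\Omega))\hookrightarrow L^{2(n+2)/n}(Q)$ to obtain $\|w\|_{L^{2(n+2)/n}(Q)}\le C\|\xi\|_{L^2(Q)}$; then, by H\"older against $\|z_{u,v}\|_{L^2(Q)}$, the right-hand side of the equation for $\omega$ lies in $L^{(n+2)/(n+1)}(Q)$ with norm $\le \bar CC\|\xi\|_{L^2(Q)}\|z_{u,v}\|_{L^2(Q)}$; finally I would apply an $L^{(n+2)/(n+1)}(Q)\to L^s(Q)$ parabolic smoothing estimate for $\mathcal L_0+f_y(\cdot,\cdot,y_u^\xi)$, valid for every $s<\frac{n+2}{n}$, to conclude \reff{E4.4b}.

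The main obstacle is precisely this last smoothing estimate: it must hold with a constant $K_s$ independent of the coefficient $f_y(\cdot,\cdot,y_u^\xi)\ge 0$ (that is, uniformly over $\zeta\in\Gamma$), and the exponent $\frac{n+2}{n}$ is exactly the parabolic scaling threshold attached to an $L^2$-datum in the state. I expect to establish it by duality: testing $\omega$ against $\phi\in L^{s'}(Q)$ and solving the adjoint problem $\mathcal L_0^*\psi+f_y(\cdot,\cdot,y_u^\xi)\psi=\phi$, one gets $\int_Q\omega\phi=\int_Q h\psi\le\|h\|_{L^{(n+2)/(n+1)}(Q)}\|\psi\|_{L^{n+2}(Q)}$, so the required bound reduces to the regularity of the adjoint problem, for which the coefficient-independent estimates of Lemma \ref{mainex} (now for right-hand sides in $L^{s'}(Q)$ with $s'>\frac{n+2}{2}$) again apply. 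Verifying that this duality estimate is uniform in the coefficient, exactly as in Lemma \ref{mainex}, is the crux of the argument.
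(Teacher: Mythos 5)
Your treatment of \eqref{E4.3.2}, \eqref{E4.4} and \eqref{E4.5.a2} is correct and is the standard argument (the paper itself does not reprove this lemma, citing \cite[Lemma 18]{CJV2022b}): subtract the equations, linearize $f$ and $\partial L/\partial y$ by the mean value theorem so that the difference solves a linear parabolic equation with nonnegative $L^\infty$ zeroth-order coefficient, and invoke the coefficient-independent bounds of Lemma \ref{mainex}; the uniform Lipschitz constants come from Remark \ref{Rlip} together with the uniform bound \eqref{E3.15}, which is exactly where Assumption \ref{Ape} enters, so your constants are indeed independent of $\zeta\in\Gamma$. Your algebra for the adjoint difference (the decomposition of $-p^\eta_u f_y(y^\xi_u)+p_u f_y(y_u)$ into $-qf_y(y^\xi_u)-p_u[f_y(y^\xi_u)-f_y(y_u)]$) is also right.

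For \eqref{E4.4b} you have made the argument harder than it needs to be, and the step you flag as the crux is not something you need to verify from scratch: it is already in the paper. The right-hand side of the equation for $\omega=z^\xi_{u,v}-z_{u,v}$ satisfies, by Cauchy--Schwarz alone,
\begin{equation*}
\big\| [f_y(\cdot,\cdot,y^\xi_u)-f_y(\cdot,\cdot,y_u)]z_{u,v}\big\|_{L^1(Q)}
\le \bar C\,\|y^\xi_u-y_u\|_{L^2(Q)}\,\|z_{u,v}\|_{L^2(Q)}
\le \bar C C_2\,\|\xi\|_{L^2(Q)}\,\|z_{u,v}\|_{L^2(Q)},
\end{equation*}
and Lemma \ref{estLs} in the Appendix provides precisely the coefficient-uniform $L^1(Q)\to L^{s}(Q)$ estimate, $s\in[1,\frac{n+2}{n})$, that converts this into \eqref{E4.4b} with $K_s:=C_{s'}\bar C C_2$. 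This bypasses both the parabolic embedding $L^\infty(0,T;L^2)\cap L^2(0,T;H^1_0)\hookrightarrow L^{2(n+2)/n}(Q)$ (whose $L^\infty(0,T;L^2)$ ingredient is, incidentally, not literally stated in Lemma \ref{mainex}, though it does hold uniformly in $\alpha\ge 0$) and your duality construction. Your duality plan is morally sound --- it is essentially how Lemma \ref{estLs} itself is proved in \cite{CDJ2022,CJV2022b} --- but as written it has a small soft spot: for $n=1$ your test exponent $s'$ ranges over $(\frac{3}{2},\infty)$, while Lemma \ref{mainex}(2) only covers right-hand sides in $L^r$ with $r>\max\{2,1+\frac n2\}=2$, so for $s'\in(\frac32,2]$ you could not literally cite Lemma \ref{mainex} and would have to reprove the $L^{s'}\to L^\infty$ bound; citing Lemma \ref{estLs} instead removes the issue entirely. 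Also note that your intermediate conclusion $h\in L^{(n+2)/(n+1)}(Q)$ is stronger than needed: since $Q$ is bounded, only $\|h\|_{L^1(Q)}$ matters for the target estimate.
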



 \begin{Lemma}
Let $u\in \mathcal U$ and $y_u$, $p_u$ be the corresponding state and adjoint state. 
Further, let $y_u^\zeta$ and $p^\zeta_u$ be solutions to the perturbed state and adjoint equation in \reff{s1per}
for the control $u$.
\begin{enumerate}

\item Let the function $L_1$ in the objective functional be independent of $y$. There exists a constant $C>0$,
independent of $\zeta \in \Gamma$, such that for all $v\in L^r(0,T)^m$, the following estimate holds: \begin{align}
\Big\vert  \int_Q&\Big \langle \frac {\partial H}{\partial u}(x,t,y_u,p_u)-
\frac {\partial H}{\partial u}(x,t,y^{\zeta}_u,p^{\zeta}_u),v\Big\rangle \dx\dt\Big\vert  \le  C(\|\xi\| _{L^2(Q)}+
\|\eta\| _{L^2(Q)})\| z_{u,v}\| _{L^2(Q)}.
\label{esti2}
\end{align}

\item There exists a constant $\tilde C>0$,
independent of $\zeta \in \Gamma$, such that for all $v\in L^r(0,T)^m$, the following estimate holds:\begin{align}
\Big\vert  \int_Q&\Big\langle\frac {\partial H}{\partial u}(x,t,y_u,p_u)-
\frac {\partial H}{\partial u}(x,t,y^{\zeta}_u,p^{\zeta}_u),v\Big\rangle\dx\dt\Big\vert\le \tilde C(\| \xi\|_{L^2(Q)}
       +\|\eta\| _{L^2(Q)})\| v\|_{L^1(0,T)^m}.
\label{esti2.1}
\end{align}
\end{enumerate}
\label{Impest}
\end{Lemma}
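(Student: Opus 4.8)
The goal is to bound the difference of the control-derivatives of the Hamiltonian, evaluated at the unperturbed state/adjoint pair $(y_u,p_u)$ versus the perturbed pair $(y_u^\zeta,p_u^\zeta)$, where both share the same control $u$. The plan is to begin by recalling the explicit form of $\frac{\partial H}{\partial u}$, which according to the definition of $H$ and the notation introduced before Theorem \ref{pontryagin} equals $\langle L_1(x,t,y)+p\,g(x),v\rangle$ in direction $v$. Subtracting the two evaluations yields a sum of two contributions: one coming from the difference $L_1(x,t,y_u)-L_1(x,t,y_u^\zeta)$, and one coming from $(p_u-p_u^\zeta)g(x)$. I would handle these two terms separately.

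For the first term, the difference $L_1(x,t,y_u)-L_1(x,t,y_u^\zeta)$ is controlled by the local Lipschitz constant of $L_1$ with respect to $y$ (the constant $\bar C$ from Remark \ref{Rlip}), so it is bounded pointwise by $\bar C\,|y_u-y_u^\zeta|$. The second term involves $p_u-p_u^\zeta$ multiplied by the fixed functions $g_j\in L^\infty(\Omega)$. In both cases, after pairing against $v(t)$ and integrating in $x$, the integrand becomes the product of a state-or-adjoint difference with the quantity $\langle g, v\rangle$ (or with $v$ times an $L^\infty$ factor). The crucial observation is that $\langle g(x),v(t)\rangle$ is precisely the right-hand side of the linearized state equation \reff{stddt} (up to the lower-order $f_y$ term), so one can rewrite the integral so that $z_{u,v}$ enters, or bound it in $L^1$-terms.

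For item (i), under the hypothesis that $L_1$ is independent of $y$, the first contribution vanishes entirely, and I am left only with the adjoint-difference term $\int_Q (p_u-p_u^\zeta)\langle g,v\rangle\dx\dt$. Using the estimate \reff{E4.5.a2} from Lemma \ref{T4.1}, $\|p_u^\eta-p_u\|_{L^2(Q)}\le R_2(\|\xi\|_{L^2(Q)}+\|\eta\|_{L^2(Q)})$, together with a Cauchy--Schwarz pairing in $L^2(Q)$ against $\langle g,v\rangle$, and recognizing that $\|\langle g,v\rangle\|$ relates to $\|z_{u,v}\|_{L^2(Q)}$ via Lemma \ref{mainex} or the a priori bound \reff{wl2}, yields the desired factor $\|z_{u,v}\|_{L^2(Q)}$ on the right. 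The point is that the $L^2(Q)$-norm of the linearized-state source can be reabsorbed into the $L^2$-norm of $z_{u,v}$ using the invertibility estimates for the linear parabolic operator. For item (ii), I keep both contributions (since $L_1$ is now allowed to depend on $y$), bound the $L_1$-difference term by $\bar C\|y_u-y_u^\zeta\|_{L^2(Q)}\le \bar C\,C_2\|\xi\|_{L^2(Q)}$ via \reff{E4.3.2}, and pair everything against $v$ in the $L^1(0,T)^m$-norm instead, pulling the state/adjoint differences out in $L^\infty$ or $L^2$ and using boundedness of $g_j$ in $L^\infty(\Omega)$; the target norm $\|v\|_{L^1(0,T)^m}$ then arises naturally.

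The main obstacle I anticipate is bookkeeping the interplay between the spatial integration (which converts $\langle g,v\rangle$ into a scalar per time $t$) and the choice of function-space norms on the right-hand side: in item (i) the estimate must end in $\|z_{u,v}\|_{L^2(Q)}$, so I must carefully route the adjoint difference through Cauchy--Schwarz and then re-express the resulting $\|\langle g,v\rangle\|_{L^2(Q)}$ in terms of $z_{u,v}$, whereas in item (ii) the cleaner $L^\infty$-$L^1$ pairing suffices. Making the two bounds yield \emph{different} right-hand structures from essentially the same decomposition, while keeping all constants independent of $\zeta\in\Gamma$ (which relies on the uniform bounds \reff{E3.15} and Remark \ref{Rlip}), is the delicate part.
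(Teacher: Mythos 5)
Your decomposition into the $L_1$-difference and the adjoint-difference $(p_u-p_u^\zeta)g$ is a reasonable start, and your remark that $\langle g,v\rangle$ is the source of the linearized equation \eqref{stddt} is indeed the germ of the correct argument. But the way you actually execute item (i) contains a fatal step: you propose to bound $\int_Q(p_u-p_u^\zeta)\langle g,v\rangle\dx\dt$ by Cauchy--Schwarz, using \eqref{E4.5.a2} for the adjoint difference, and then to ``re-express $\|\langle g,v\rangle\|_{L^2(Q)}$ in terms of $\|z_{u,v}\|_{L^2(Q)}$ via Lemma \ref{mainex} or \eqref{wl2}''. No such re-expression exists: \eqref{wl2} bounds the \emph{solution} by the \emph{source}, $\|z_{u,v}\|_{L^2(0,T;H^1_0(\Omega))}\le C_2\|\langle g,v\rangle\|_{L^2(Q)}$, and the reverse inequality is false because the parabolic solution operator is smoothing and its inverse is unbounded. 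Concretely, for rapidly oscillating $v$ (say $v(t)=\sin(kt)\,e_j$) one has $\|z_{u,v}\|_{L^2(Q)}\to 0$ as $k\to\infty$ while $\|\langle g,v\rangle\|_{L^2(Q)}$ stays bounded away from zero, so no constant $C$ can make your chain of inequalities close. This is precisely why the paper does not use Cauchy--Schwarz on this term at all: it integrates by parts, pairing $p_u$ against $\mathcal L_0 z_{u,v}+f_y(\cdot,\cdot,y_u)z_{u,v}=\langle g,v\rangle$ and $p_u^\zeta$ against $\mathcal L_0 z^\zeta_{u,v}+f_y(\cdot,\cdot,y_u^\zeta)z^\zeta_{u,v}=\langle g,v\rangle$, and then invokes the two (different) adjoint equations. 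This converts the adjoint difference into the terms $I_1,I_2,I_4$ of the paper's proof, which involve $\frac{\partial L_0}{\partial y}(x,t,y_u)z_{u,v}-\frac{\partial L_0}{\partial y}(x,t,y_u^\zeta)z^\zeta_{u,v}$, the analogous $L_1$-derivative terms, and $\int_Q\eta\, z^\zeta_{u,v}$; these are then estimated using the state estimate \eqref{E4.3.2} together with the comparisons \eqref{E4.4}, \eqref{E4.4b} between $z_{u,v}$ and the perturbed linearized state $z^\zeta_{u,v}$, and \eqref{aeqestLs}. Your proposal never introduces $z^\zeta_{u,v}$ or the estimates \eqref{E4.4}--\eqref{E4.4b}, and without them the factor $\|z_{u,v}\|_{L^2(Q)}$ in \eqref{esti2} cannot be produced. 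Note also that even with $L_1$ independent of $y$, the adjoint difference does not reduce to \eqref{E4.5.a2} alone, since the two adjoint equations differ through $\frac{\partial L_0}{\partial y}$, $f_y$ and $\eta$; the duality is genuinely needed.

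There is a secondary gap in your item (ii): to extract the norm $\|v\|_{L^1(0,T)^m}$ you must pull the state/adjoint differences out \emph{uniformly in time}, i.e.\ you need $L^\infty(0,T;L^2(\Omega))$-type control, whereas the tools you cite (\eqref{E4.3.2}, \eqref{E4.5.a2}) only give $L^2(Q)$ bounds; for the adjoint difference no time-uniform estimate is available among the quoted lemmas. The paper sidesteps this entirely: it first proves the bound ending in $\|z_{u,v}\|_{L^2(Q)}$ (plus the genuinely $L^1$-type term $I_3$, for which it does use an $L^\infty(0,T;L^2(\Omega))$ estimate of $y_u-y_u^\zeta$), and then converts via Lemma \ref{l2l1time}, which gives $\|z_{u,v}\|_{L^2(Q)}\le 2\exp\bigl(\|f_y(\cdot,\cdot,\bar y)\|_{L^\infty(Q)}\bigr)\max_j\|g_j\|_{L^2(\Omega)}\|v\|_{L^1(0,T)^m}$. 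If you repair item (i) by the duality argument, item (ii) then follows by exactly this conversion, and your direct $L^\infty$--$L^1$ pairing becomes unnecessary.
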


\begin{proof}{}
We begin with integrating by parts
\begin{align*}
&\Big\vert  \int_Q \Big \langle \frac {\partial H}{\partial u}(x,t,y_u,p_u)-\frac {\partial H}{\partial u}(x,t,y^{\zeta}_u,p^{\zeta}_u)
,v\Big \rangle \dx\dt\Big\vert \le\Big\vert  \int_Q\Big[\frac{\partial L_0}{\partial y}(x,t,y_u)z_{u,v}-
\frac{\partial L_0}{\partial y}(x,t,y^{\zeta}_u)
z_{u,v}^{\zeta}\Big]\dx\dt\Big\vert\\
&+\Big\vert\int_Q \Big\langle \frac{\partial L_1}{\partial y}(x,t,y_u),v\rangle z_{u,v}-
\langle \frac{\partial L_1}{\partial y}(x,t,y_u^\zeta),v\Big\rangle z_{u,v}^\zeta \dx\dt\Big\vert +
\Big\vert  \int_Q\Big\langle L_1(x,t,y_u)-L_1(x,t,y^\zeta),v\Big\rangle \dx\dt   \Big\vert\\
&+\Big \vert \int_Q  \eta z^\zeta_{u,v}\dx\dt\Big\vert=I_1+I_2+I_3+I_4.
\end{align*}
For the first term we use the H\"older inequality and the mean value theorem to estimate
\begin{align*}
I_1&\le\int_Q\Big\vert  \frac{\partial L_0}{\partial y}(x,t,y_u)-\frac{\partial L_0}{\partial y}(x,t,y^{\zeta}_u)\Big\vert  
\,\big\vert z_{u,v}\big\vert  \dx\dt+\int_Q\Big\vert  \frac{\partial L_0}{\partial y}(x,t,y^{\zeta}_u) \Big\vert   
\,\big\vert  z_{u,v}-z_{u,v}^{\zeta}\big\vert  \dx\dt\\
& \le \Big\| \frac{\partial^2 L_0}{\partial y^2}(x,t,y_{\theta})\Big\|_{L^\infty(Q)}\|   y^\zeta_{u} - y_{u}\|_{L^2(Q)}\| z_{u,v}\| _{L^2(Q)}\\
& +K_s \Big\|  \frac{\partial L_0}{\partial y}(x,t,y^{\zeta}_u)\Big\|_{L^{s'}(Q)}\| \xi \|_{L^2(Q)} \| z_{u,v}\|_{L^2(Q)},
\end{align*}
where $L^{s'}$ is the dual space to $L^s$.
By the mean value theorem, Assumption \ref{exist.2}, \eqref{unibound}, \eqref{E4.3.2} and \eqref{aeqestLs},  we can infer the existence of a constant $B_1>0$ such that 
\begin{equation}
I_1\le B_1 \|   
\xi\|   _{L^2(Q)}\|   z_{u,v}\|   _{L^2(Q)} .
\end{equation}
The second term is estimated by using Assumption \ref{exist.2}, \eqref{unibound}, H\"older's inequality, and \eqref{E4.4}:
\begin{align*}
I_2 &\leq \Big\vert\int_Q \Big\langle \frac{\partial L_1}{\partial y}(x,t,y_u)- 
\frac{\partial L_1}{\partial y}(x,t,y_u^\zeta),v\Big\rangle  z_{u,v} \dx\dt\Big\vert+
\Big\vert \int_Q \Big\langle  \frac{\partial L_1}{\partial y}(x,t,y_u^\zeta), v \Big\rangle  \Big[z_{u,v}-z_{u,v}^\zeta \Big] \dx\dt 
\Big \vert\\
&\le K_2 \max_{1\leq i\leq m} \Big\| \frac{\partial^2 L_{1,i}}{\partial y^2}(x,t,y_{\theta_i})v_i \Big\|_{L^\infty(Q)}
\| y_u-y^\zeta_u\|_{L^2(Q)} \| z_{u,v}\|_{L^2(Q)}\\
&+K_s \Big\|  \Big\langle \frac{\partial L_1}{\partial y}(x,t,y^{\zeta}_u),v\Big\rangle \Big\|_{L^{s'}(Q)}\| \xi \|_{L^2(Q)} 
\| z_{u,v}\|_{L^2(Q)}
\end{align*}
By the mean value theorem, Assumption \ref{exist.2}, \eqref{unibound}, \eqref{E4.3.2} and \eqref{aeqestLs}, we can infer the existence of a constant $B_2>0$ such that
\begin{equation}
I_2\leq B_2 \|   \xi\|   _{L^2(Q)}\|   z_{u,v}\|   _{L^2(Q)}.
\end{equation}
Applying the mean value theorem $m$ times, we obtain for the third term 
\begin{align*}
 I_3 & \leq \Big\vert  \int_Q\langle L_1(x,t,y_u)-L_1(x,t,y^\zeta),v\rangle \dx\dt \Big\vert\\
 & \leq \max_{1\leq j\leq m}\Big \|  \frac{\partial L_{1,j}}{\partial y}(x,t,y_{\theta_j}) 
\Big \|_{L^\infty(Q)} \| y_u-y^\zeta_u  \|_{L^\infty(0,T,L^2(\Omega)}) \|v\|_{L^1(0,T)^m}
\end{align*}
and infer by Assumption \ref{exist.2}, \eqref{unibound},  \eqref{aeqestLs} and \eqref{E4.3.2}, the existence of a constant $B_3>0$ with
\begin{equation*}
I_3\leq B_3 \|   
\xi\|_{L^2(Q)}\|v\|_{L^1(0,T)^m}.
\end{equation*}
For the last term, we estimate by Assumption \ref{exist.2}, \eqref{aeqestLs}, \eqref{unibound}, \eqref{E4.4}
and \eqref{E4.3.2}
\begin{align*}
I_4\leq \|\eta \|_{L^2(Q)}(\| z_{u,v} \|_{L^2(Q)}+\| z^\zeta_{u,v}-z_{u,v} \|_{L^2(Q)})\leq 
(1+K_2C_{pe}) \|\eta \|_{L^2(Q)}\| z_{u,v} \|_{L^2(Q)}
\end{align*}
and define $B_4:=1+K_2C_{pe}$.
If the function $L_1$ in the objective functional is independent of $y$, the term $I_3$ does not appear and the first 
estimate \eqref{esti2} holds for $C:=4 \max_{1\leq i\leq 4} B_i$. For the other case, \eqref{esti2.1}, we use that 
by Theorem \ref{mainex} and Lemma \ref{l2l1time} it holds
\begin{align*}
\|z_{u,v}\|_{L^2(Q)}&\leq  2\exp \Big(\Big\| \frac{\partial f}{\partial y}(\cdot, \cdot,\bar y(\cdot))\Big\|_{L^\infty(Q)}\Big) 
           \max _{1\leq j \leq m}\|g_j\|_{L^2(\Omega)}\| v\|_{L^1(0,T)^m}
\end{align*} 
and define $\tilde C$ in a similar way.
\end{proof}


\begin{proof}{ of Theorem \ref{Ssr}}
We select $\alpha <\tilde\alpha_0$ according to Lemma \ref{good}. 
Let $\zeta=(\xi,\eta,\rho)\in\mathcal Z$ and $\psi=(y^{\zeta}_u,p^{\zeta}_u,u)$ with $\| u-\bar u\|_{L^1(0,T)^m}\leq\alpha$ be
such that  $\zeta\in\Phi(\psi)$, i.e. 
\begin{align*}
\left\{ \begin{array}{cll}
\xi&=&\mathcal Ly^{\zeta}_u+f(\cdot,\cdot,y^{\zeta}_u) - u,\\
\eta&=&\mathcal L^*p_u^{\zeta}-\frac {\partial H}{\partial y}(\cdot,\cdot,y^{\zeta}_u,p_u^{\zeta},u),\\
\rho&\in& \int_\Omega\frac {\partial H}{\partial u}(x,\cdot,y^{\zeta}_u,p_u^{\zeta})\dx + N_{\mathcal U}(u).
\end{array} \right.
\end{align*}
Let $y_u$ and $p_u$ denote the solutions to the unperturbed problem with respect to $u$, i.e.
\[
\langle u,g\rangle=\mathcal Ly_u+f(\cdot,\cdot,y_u)\textrm{ and } 0=\mathcal L^*p_u-
\frac {\partial H}{\partial y}(\cdot,\cdot,y_u,p_u,u).
\]
By Lemma \ref{T4.1}, there exists $ C_2 ,R_2>0$ independent of $\psi$ and $\zeta$ such that
\begin{align}\label{mt1}
\|y^{\zeta}_u-y_{u}\|_{L^2(Q)}&+\|p^{\zeta}_u-p_{u}\| _{L^2(Q)}\le ( C_2 +R_2)\Big(\|\xi\|_{L^2(Q)}+\|\eta\|_{L^2(Q)}\Big).
	\end{align}
By the definition of the normal cone, $\rho\in \int_\Omega \frac {\partial H}{\partial u}(x,\cdot,y^{\zeta}_u,p^{\zeta}_u)\dx+
 N_{\mathcal U}(u)$
 is equivalent to
\begin{align*}
0\ge \int_Q\Big\langle \rho-\frac {\partial H}{\partial u}(x,t,y^{\zeta}_u,p^{\zeta}_u),w-u\Big\rangle\dx\dt\ \ \forall w\in \mathcal U.
\end{align*}
We conclude for $w=\bar u$, 
\begin{align*}
0&\ge \int_Q\Big \langle \frac {\partial H}{\partial u}(x,t,y_u,p_u),u-\bar u \Big\rangle\ \dx\dt+\int_Q\Big\langle \rho+
\frac {\partial H}{\partial u}(x,t,y_u,p_u)-\frac {\partial H}{\partial u}(x,t,y^{\zeta}_u,p^{\zeta}_u)
,\bar u-u \Big\rangle \dx \dt \nonumber\\
&\ge J'(u)(u-\bar u)-\max_{1\leq j\leq m}\|\rho_j\| _{L^{\infty}(0,T)}\|\bar u-u\| _{L^1(0,T)^m}\\
& \quad -\Big\vert  \int_Q \Big\langle \frac {\partial H}{\partial u}
(x,t,y_u,p_u)-\frac {\partial H}{\partial u}(x,t,y^{\zeta}_u,p^{\zeta}_u),\bar u-u\Big\rangle \dx\dt\Big\vert.
\end{align*}
By Lemma \ref{Impest}, we have an estimate on the third term. Since $\| u-\bar u\|_{L^1(0,T)^m}<\alpha$, 
we estimate by Lemma~\ref{good} and Lemma~\ref{Impest}
 \begin{align*}
\|u-\bar u\|   _{L^1(0,T)^m}^2\bar \gamma_0&\le J'(u)(u-\bar u)\\
&\le \tilde C \Big(\|   \xi\|_{L^2(Q)}+\|\eta\|_{L^2(Q)}\Big)
 \| u-\bar u\| _{L^1(0,T)^m}+\max_{1\leq j\leq m}\|\rho_j\| _{L^{\infty}(0,T)}\|\bar u-u\| _{L^1(0,T)^m}
\end{align*}
and consequently for an adapted constant, denoted in the same way
\begin{equation*}
\|\bar u-u\|_{L^1(0,T)^m}\le \tilde C\Big(\max_{1\leq j\leq m}\|\rho_j\| _{L^{\infty}(0,T)}+
\|\xi\| _{L^2(Q)}+\|\eta\| _{L^2(Q)}\Big).
\end{equation*}
To estimate the states, by Lemma \ref{l2l1time}, we use the estimate for the controls and obtain
\begin{align}
\|y_{\bar u}-y_u\| _{L^{2}(Q)}&\le 2\exp (\| \frac{\partial f}{\partial y}(\cdot,\cdot, \bar y(\cdot))\|_{L^\infty(Q)}) 
           \max _{1\leq j \leq m}\|g_j\|_{L^2(\Omega)}\|\bar u-u \|_{L^1(0,T)^m}\label{stc}.
\end{align}
Thus, for a constant again denoted by $\tilde C$
\begin{equation*}\label{Eststate}
\|   y_{\bar u}-y_u\| _{L^{2}(Q)}\le \tilde  C \Big(\max_{1\leq j\leq m}\|\rho_j\| _{L^{\infty}(0,T)}+\|\xi\|_{L^2(Q)}+\|\eta\| _{L^2(Q)}\Big).
\end{equation*}
Next, we realize that by Lemma \ref{T4.1} and \eqref{Eststate}
\begin{align*}
\|y_{\bar u}-y_u^\zeta\|   _{L^2(Q)}&\le \| y_{\bar u}-y_u \|_{L^2(Q)}+\|y_{u}-y_u^\zeta \|_{L^2(Q)}\\
&\le \max\{\tilde C, C_2 \}
\Big(\max_{1\leq j\leq m}\|\rho_j\| _{L^{\infty}(0,T)}+\|\xi\|_{L^2(Q)}+\|\eta\| _{L^2(Q)}\Big).
\end{align*}
Using $\|p_{\bar u}-p_u\| _{L^2(Q)}\le  C_2 \| y_{\bar u}-y_u\|_{L^2(Q)}$ and \eqref{E4.5.a2}, the same estimate holds
 for the adjoint state
\begin{align*}
\|p_{\bar u}-p_u^\zeta \|_{L^2(Q)}&\le \| p_{\bar u}-p_u \| _{L^2(Q)}+\|p_{u}-p_u^\zeta \|_{L^2(Q)}\\
&\le ( C_2 \max\{\tilde C, C_2 \}+R_2)
 \Big(\max_{1\leq j\leq m}\|\rho_j\| _{L^{\infty}(0,T)}+\|\xi\|_{L^2(Q)}+\|\eta\| _{L^2(Q)}\Big),
\end{align*}
subsequently we define $\kappa:= C_2 \max\{\tilde C, C_2 \}+R_2$. 
\end{proof}

To obtain results under Assumption \ref{A4} for $k\in\{1,2\}$, we need additional restrictions. 
We either don't allow perturbations $\rho$ (appearing in the inclusion in \reff{s1per}) or they need to satisfy 
\begin{equation}
\rho = \mu \sigma
\label{perturbeq}
\end{equation}
where $\mu=\int_\Omega g\dx \in \mathbb R^m$ and $\sigma \in W^{1,2}(0,T)$ with $\sigma(T)=0$. 

\begin{Theorem} \label{SsHr}
Let some of the assumptions 
$(A_1),(B_1)$ and $(A_2),(B_2)$ be fulfilled for the reference solution $\bar \psi = (\bar y,\bar p, \bar u)$ of $0 \in \Phi(\psi)$. 
Further, for $(A_1), (A_2)$ let the function $L_1$ in the objective functional be independent of $y$. For $(B_1), (B_2)$ 
let $L_1$ be affine with respect to $y$. In addition, the set $\Gamma$ of feasible perturbations is restricted to such 
$\zeta \in \Gamma$ for which 
the component $\rho$ is either zero or satisfies \eqref{perturbeq}. 
The numbers $\alpha_n$, $\kappa_n$ and $\varepsilon$ are as in Theorem \ref{Ssr}.
Then the following statements hold for $n \in \{ 1,2,3\}$:

1. Under Assumption \ref{A4}, cases $(A_1)$ and $(B_1)$, the estimation
\begin{align*}
 &\| \bar u-u \|_{L^1(0,T)^m}+\|y_{\bar u}-y^{\zeta}_u\|_{L^2(Q)}+\| p_{\bar u}-p^{\zeta}_u\|_{L^2(Q)}
\le \kappa_n \Big(\|\frac{ d\sigma }{dt} \|_{L^{2}(0,T)}+\|\xi\|_{L^2(Q)}+\|\eta\| _{L^2(Q)}\Big),
\end{align*}
hold for all $u\in \mathcal U$ with $\|y_u-\bar y\|_{L^\infty(Q)}<\alpha_n$, in the case of $(A_1)$, or 
$\|u-\bar u \|_{L^1(0,T)^m}<\alpha_n$ in the case ($B_1$), and for all $\zeta\in \Gamma$ satisfying $\zeta\in\Phi(\psi)$.

2. Under Assumption \ref{A4}, cases $(A_2)$ and $(B_2)$, the estimation
\begin{equation*}
\|\bar y-y^{\zeta}_u\|_{L^2(Q)}+\|\bar p-p^{\zeta}_u\|_{L^2(Q)} \le 
        \kappa_n \Big(\|\frac{ d\sigma }{dt} \|_{L^{2}(0,T)} +\|\xi\| _{L^2(Q)} + \|\eta\| _{L^2(Q)}\Big)
\end{equation*}
hold for all $u\in \mathcal U$ with $\|y_u-\bar y\|_{L^\infty(Q)}<\alpha_n$, in the case of $(A_2)$, or 
$\|u-\bar u \|_{L^1(0,T)^m}<\alpha_n$ in the case ($B_2$), and for all $\zeta\in \Gamma$ satisfying $\zeta\in\Phi(\psi)$.
\end{Theorem}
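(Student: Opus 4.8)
The plan is to follow the scheme of the proof of Theorem \ref{Ssr}, replacing the weak coercivity $(B_0)$ by the stronger ones of Lemma \ref{good}(1)--(2) and dealing with the structured perturbation $\rho=\mu\sigma$ by a new argument. As in Theorem \ref{Ssr}, I would start from the variational inequality in the third line of \reff{s1per} tested with $w=\bar u$, split $\tfrac{\partial H}{\partial u}(\cdot,y_u^\zeta,p_u^\zeta)=\tfrac{\partial H}{\partial u}(\cdot,y_u,p_u)+[\,\cdot\,]$, and recognize via \reff{E3.4} that $\int_Q\langle\tfrac{\partial H}{\partial u}(\cdot,y_u,p_u),u-\bar u\rangle=J'(u)(u-\bar u)$. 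Writing $v=u-\bar u$, this gives
\[
J'(u)(u-\bar u)\le \Big|\int_0^T\langle\mu\sigma,v\rangle\dt\Big|+\Big|\int_Q\Big\langle \tfrac{\partial H}{\partial u}(\cdot,y_u,p_u)-\tfrac{\partial H}{\partial u}(\cdot,y_u^\zeta,p_u^\zeta),v\Big\rangle\Big|,
\]
whose left-hand side is bounded below, through Lemma \ref{good}, by $\bar\gamma_k\|z_{\bar u,v}\|_{L^2(Q)}^{k}\|v\|_{L^1(0,T)^m}^{2-k}$.

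The main new ingredient is the estimate of $\int_0^T\langle\mu\sigma,v\rangle\dt$, which must be matched with the $L^2(Q)$-norm of $z_{\bar u,v}$ produced by the coercivity. First I would fix a spatial profile $\theta\in H^1_0(\Omega)$ with $\mathcal A^*\theta\in L^2(\Omega)$ and $\int_\Omega\theta g_j\dx=\mu_j$ for $j=1,\dots,m$; such a $\theta$ is found by seeking $\mathcal A^*\theta$ in the span of $\{\mathcal A^{-1}g_j\}$ and solving the associated moment system, which is nonsingular since the $g_j$ have disjoint supports. Note that $\theta\equiv1$ would satisfy the moment conditions but violate the boundary condition, and circumventing this is exactly the point. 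Testing the equation for $z_{\bar u,v}$ with $\sigma(t)\theta(x)\in L^2(0,T;H^1_0(\Omega))$ reproduces $\int_0^T\sigma\langle\mu,v\rangle\dt$ on the right, while integration by parts in time (using $\sigma(T)=0$ and $z_{\bar u,v}(\cdot,0)=0$, so no boundary terms) and in space (using $z_{\bar u,v}=0$ on $\Sigma$) turns the leading term into $-\int_0^T\sigma'\langle z_{\bar u,v},\theta\rangle_{L^2(\Omega)}\dt$ and the elliptic term into $\int_Q\sigma\,z_{\bar u,v}\,\mathcal A^*\theta$. Using $\mathcal A^*\theta\in L^2$, $f_y\in L^\infty$, and the Poincar\'e-type bound $\|\sigma\|_{L^2(0,T)}\le C\|\sigma'\|_{L^2(0,T)}$ (again from $\sigma(T)=0$), all three terms are controlled by $C\|\sigma'\|_{L^2(0,T)}\|z_{\bar u,v}\|_{L^2(Q)}$, i.e.\ exactly by the factor appearing in the coercivity.

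I would then combine this with Lemma \ref{Impest}. In the cases $(A_1),(A_2)$, where $L_1$ is independent of $y$, \reff{esti2} bounds the difference term by $C(\|\xi\|_{L^2}+\|\eta\|_{L^2})\|z_{u,v}\|_{L^2}$, and Lemma \ref{sprep}(ii) lets me replace $\|z_{u,v}\|$ by $\|z_{\bar u,v}\|$; thus $\bar\gamma_k\|z_{\bar u,v}\|_{L^2}^{k}\|v\|_{L^1}^{2-k}\le C(\|\sigma'\|_{L^2}+\|\xi\|_{L^2}+\|\eta\|_{L^2})\|z_{\bar u,v}\|_{L^2}$, and dividing by $\|z_{\bar u,v}\|_{L^2}$ yields the control bound $\|v\|_{L^1}\le C(\cdots)$ for $k=1$ and the state bound $\|z_{\bar u,v}\|_{L^2}\le C(\cdots)$ for $k=2$. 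The remaining state and adjoint estimates then follow from Lemma \ref{sprep}(ii) (with $\|y_u-\bar y\|_{L^2}\le2\|z_{\bar u,v}\|_{L^2}$), Lemma \ref{l2l1time}, and \reff{E4.5.a2} of Lemma \ref{T4.1}, after the same splitting of $y_{\bar u}-y_u^\zeta$ and $p_{\bar u}-p_u^\zeta$ used at the end of the proof of Theorem \ref{Ssr}. In the affine cases $(B_1),(B_2)$ only the $L^1$-type bound \reff{esti2.1} is available for the difference term, so the matching with the $\|z_{\bar u,v}\|_{L^2}$-factor requires a case distinction (comparing $\bar\gamma_k\|v\|_{L^1}$ with the size of the perturbations) together with $\|z_{\bar u,v}\|_{L^2}\le C\|v\|_{L^1}$ from Lemma \ref{l2l1time}.

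The step I expect to be the main obstacle is precisely the treatment of $\int_0^T\langle\mu\sigma,v\rangle\dt$: because the coefficients $a_{i,j}$ lie only in $L^\infty(\Omega)$, a crude choice of $\theta$ (for instance a cutoff equal to $1$ on $\bigcup_j\mathrm{supp}\,g_j$) produces $\mathcal A^*\theta$ only in $H^{-1}(\Omega)$, which would force the harmful factor $\|z_{\bar u,v}\|_{L^2(0,T;H^1_0(\Omega))}$ in place of $\|z_{\bar u,v}\|_{L^2(Q)}$ and destroy the matching with the coercivity. Securing $\theta$ with $\mathcal A^*\theta\in L^2$ while preserving the moment conditions is what makes the argument close. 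A secondary difficulty is the bookkeeping in the affine cases $(B_1),(B_2)$, where the gap between the $L^1$-order difference estimate and the $L^2$-order coercivity term has to be absorbed by the case distinction indicated above.
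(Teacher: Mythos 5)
Your proposal follows the same skeleton as the paper's proof (variational inequality tested with $w=\bar u$, identification of $J'(u)(u-\bar u)$, coercivity via Lemma \ref{good}, the $(\xi,\eta)$-difference via Lemma \ref{Impest} with Lemma \ref{sprep} to pass between $z_{u,v}$ and $z_{\bar u,v}$, and the closing state/adjoint estimates of Theorem \ref{Ssr}), but your treatment of the structured perturbation $\rho=\mu\sigma$ is genuinely different---and in fact more rigorous than the paper's. The paper simply inserts the linearized equation and integrates by parts \emph{in time only}, i.e.\ it tests with the spatially constant function $\sigma(t)$, obtaining $\int_0^T\langle\rho,u-\bar u\rangle\dt=\int_Q\big(-\tfrac{d\sigma}{dt}+f_y(\cdot,\cdot,y_{\bar u})\sigma\big)z_{\bar u,u-\bar u}\dx\dt$ and then the bound $C\|\tfrac{d\sigma}{dt}\|_{L^2(0,T)}\|z_{\bar u,u-\bar u}\|_{L^2(Q)}$ in one line. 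This is only formal: $\sigma(t)\mathbf{1}$ is not admissible since $\mathbf{1}\notin H^1_0(\Omega)$, and for the Dirichlet problem $\int_\Omega\mathcal{A}z\dx$ carries a nonvanishing conormal boundary flux, which the paper silently drops---precisely the obstruction you identify. Your auxiliary profile $\theta\in H^1_0(\Omega)$ with $\mathcal{A}^*\theta\in L^2(\Omega)$ and $\int_\Omega\theta g_j\dx=\mu_j$ repairs this and lands on the same bound: with $\theta=(\mathcal{A}^*)^{-1}\big(\sum_jc_jg_j\big)$ the moment matrix $\big(\int_\Omega g_i(\mathcal{A}^*)^{-1}g_j\dx\big)_{ij}$ has positive definite symmetric part by ellipticity, hence is invertible, provided you first discard the indices with $g_j\equiv0$ (allowed by the standing assumptions, which guarantee $\mathrm{meas}(\mathrm{supp}\,g_i)>0$ only for \emph{one} $i$); for those indices $\mu_j=0$ and the moment condition is void, so this is a harmless but necessary adjustment to your nonsingularity claim. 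In short: the paper's route buys brevity at the cost of a formal step, yours buys rigor at the cost of the moment construction.

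One caveat on your ``secondary difficulty.'' The paper's own proof is terse here and invokes \eqref{esti2} even in case $(B_1)$, although Lemma \ref{Impest} establishes \eqref{esti2} only when $L_1$ is independent of $y$; your insistence on \eqref{esti2.1} for the affine cases is the honest reading. But then your case distinction, comparing $C_\rho\|\tfrac{d\sigma}{dt}\|_{L^2}\|z_{\bar u,v}\|_{L^2}$ against $C(\|\xi\|_{L^2}+\|\eta\|_{L^2})\|v\|_{L^1}$ in
\begin{equation*}
\bar\gamma_1\|z_{\bar u,v}\|_{L^2(Q)}\|v\|_{L^1(0,T)^m}\le C_\rho\Big\|\tfrac{d\sigma}{dt}\Big\|_{L^2(0,T)}\|z_{\bar u,v}\|_{L^2(Q)}+C\big(\|\xi\|_{L^2(Q)}+\|\eta\|_{L^2(Q)}\big)\|v\|_{L^1(0,T)^m},
\end{equation*}
yields the $L^1$ control bound of part 1 only in one branch; in the other branch one obtains only $\|z_{\bar u,v}\|_{L^2}\lesssim\|\xi\|_{L^2}+\|\eta\|_{L^2}$, i.e.\ the state and adjoint estimates but not $\|u-\bar u\|_{L^1}\lesssim d_{\mathcal Z}(\zeta,0)$; similarly under $(B_2)$ the resulting quadratic inequality in $\|z_{\bar u,v}\|_{L^2}$ gives a priori only square-root (H\"older) dependence on $\|\xi\|_{L^2}+\|\eta\|_{L^2}$. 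So your sketch does not fully close the affine cases as stated---but this incompleteness is shared with (indeed exposed by) the paper's own argument, which does not supply a valid substitute for \eqref{esti2} there either.
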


\begin{proof}{}
If the perturbation $\rho\in L^2(0,T,H^{-1}(\Omega))$ satisfies \eqref{perturbeq}, it holds
\begin{align*}
\int_0^T \langle \rho,u-\bar u\rangle\dt&=\nu\int_0^T \langle \sigma,u-\bar u\rangle\dt=\int_0^T \int _{\Omega} \sigma\langle g,u-\bar u\rangle \dx\dt\\
&=\int_0^T\int_\Omega (\mathcal L z_{\bar u,u-\bar u}+f_y(\cdot,t,y_{\bar u})z_{\bar u,u-\bar u})\sigma\dx \dt\\
&=\int_0^T \int_\Omega (-\frac{d \sigma}{dt}+f_y(\cdot,t,y_{\bar u})\sigma)z_{\bar u,u-\bar u}\dx\dt.
\end{align*}
Thus we can estimate
\begin{align*}
\Big\vert  \int_0^T \langle \rho,u-\bar u\rangle \dt\Big\vert&\le K(\|\frac{d \sigma}{dt}\| _{L^2(0,T)}+
\|f_y(x,t,y_{\bar u})\| _{L^\infty(Q)}\| \sigma\|_{L^2(0,T)})\| z_{\bar u,u-\bar u}\|_{L^2(Q)}\\
&\le K(\| \frac{d \sigma}{dt}\| _{L^2(0,T)}+
C_2\|f_y(x,t,y_{\bar u})\| _{L^\infty(Q)}\| \frac{d \sigma}{dt}\|_{L^2(0,T)})\| z_{\bar u,u-\bar u}\|_{L^2(Q)}.
\end{align*}
Under Assumptions $(A_1)$, $(B_1)$, we can proceed as in the proof of Theorem \ref{Ssr} using Lemma \ref{good} and 
\eqref{esti2} in Lemma \ref{Impest}, to infer the existence of constants $\alpha_1,\kappa_1>0$ such that
\begin{equation*}
\|\bar u-u\|_{L^1(0,T)^m}\le \kappa_1\Big(\|\frac{ d\sigma }{dt} \| _{L^{2}(Q)^m}+\|   \xi\|   _{L^2(Q)}+\| \eta\| _{L^2(Q)}\Big),
\end{equation*}
for all $u\in \mathcal U$ with $\|y_u-\bar y\|_{L^\infty(Q)}<\alpha_1$ or $\|u-\bar u \|_{L^1(0,T)^m}<\alpha_1$ 
depending on the assumption.
By standard estimates, using \eqref{E2.14.2}, there exists a constant $ E >0$, such that
\begin{align*}
&\| y_{\bar u}-y_u\| _{L^2(Q)}+\| p_{\bar u}-p_u\|_{L^2(Q)}\le  E \|    y_{\bar u}-y_u\| _{L^2(Q)}\le 2 
E\|z_{u,u-\bar u}\|_{L^2(Q)}\\
&\le 2 \kappa_1 E\Big(\| \frac{ d\sigma }{dt}  \| _{L^{2}(0,T)}+\|\xi\| _{L^2(Q)}+\|\eta\| _{L^2(Q)}\Big),
\end{align*}
for all $u\in \mathcal U$ with $\|y_u-\bar y\|_{L^\infty(Q)}<\alpha_1$ or $\|u-\bar u \|_{L^1(0,T)^m}<\alpha_1$ 
depending on the assumption.
From here on, we can proceed as in the proof of Theorem \ref{Ssr} and redefine the constant $\kappa_1>0$ accordingly.
Finally, by similar reasoning, under Assumption $(A_2)$,  $(B_2)$ with Lemma \ref{good} and Lemma \ref{Impest}, one 
obtains the existence of a constant $\kappa_2>0$ such that
\begin{align*}
&\| y_{\bar u}-y_u\|_{L^2(Q)}+\| p_{\bar u}-p_u\|_{L^2(Q)}\le \kappa_2\Big(\|\frac{ d\sigma }{dt} \|_{L^{2}(0,T)}+
\|\xi\| _{L^2(Q)}+\|\eta\|_{L^2(Q)}\Big),
\end{align*}
for all $u\in \mathcal U$ with $\|y_u-\bar y\|_{L^\infty(Q)}<\alpha_2$ or $\|u-\bar u \|_{L^1(0,T)^m}<\alpha_2$.
Again, proceeding as in Theorem \ref{Ssr} and increasing the constant $\kappa_2$ if needed, proves the claim.
\end{proof}

\begin{Remark} \label{Rnonlin}
Theorems \ref{Ssr} and \ref{SsHr} concern  perturbations which are functions of $x$ and $t$ only. On the other hand, 
\cite[Theorem ]{CDK} suggests that SMSr implies a similar stability property under classes of perturbations that depend 
(in a non-linear way) on the state and control. We refer to \cite[Section 5]{CJV2022b} for a detailed discussion on this. 
By straight forward adaptations, the results therin hold also for the problem considered in this paper.
\end{Remark}

\section*{Appendix} \label{secA1}
%

A proof of the following lemma can be found in \cite[Lemma 3.5]{CDJ2022} or \cite[Lemma 3.5]{CMR}.

\begin{Lemma} \label{L3.1}
Let $X=L^\infty(Q)$ or $L^2(Q)$. Given $\bar u \in \Uad$ with associated state $\bar y$, there exists a constant $B_X>0$ 
such that the following estimate holds
\begin{equation}
\|y_{\bar u + \theta(u - \bar u)} - \bar y\|_{X} \le B_X\|y_u - \bar y\|_{X} \quad \forall \theta \in [0,1]\ \text{ and }\ \forall u \in \Uad.
\label{E3.14}
\end{equation}
\end{Lemma}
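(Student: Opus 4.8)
The plan is to compare the intermediate deviation $w_\theta := y_{\bar u + \theta(u-\bar u)} - \bar y$ with the multiple $\theta w_1$ of the endpoint deviation $w_1 := y_u - \bar y$, and to show that their difference $e := w_\theta - \theta w_1$ is bounded in the norm $X$ by $\|w_1\|_X$. Since $\|w_\theta\|_X \le \|e\|_X + \theta\|w_1\|_X \le \|e\|_X + \|w_1\|_X$, this yields \eqref{E3.14}. First I would put $u_\theta := \bar u + \theta(u-\bar u)$, note $u_\theta\in\Uad$ by convexity, and subtract the state equations \eqref{see1} written for $y_{u_\theta}$, $\bar y$ and $y_u$. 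Because the equation is affine in the control and $\langle g,u_\theta\rangle = (1-\theta)\langle g,\bar u\rangle + \theta\langle g,u\rangle$, the control terms cancel, leaving only the nonlinear source. Writing $f(\cdot,\cdot,y_{u_\theta})-f(\cdot,\cdot,\bar y)=a_\theta w_\theta$ and $f(\cdot,\cdot,y_u)-f(\cdot,\cdot,\bar y)=a_1 w_1$ with $a_\theta := \int_0^1 f_y(\cdot,\cdot,\bar y+s w_\theta)\,\mathrm ds\ge 0$ and $a_1 := \int_0^1 f_y(\cdot,\cdot,\bar y+s w_1)\,\mathrm ds\ge 0$ (using $f_y\ge 0$ from Assumption \ref{exist.2}), one obtains the linear parabolic problem
\[
\mathcal{L} e + a_\theta e = \theta(a_1-a_\theta)w_1,\qquad e(\cdot,0)=0,\quad e=0\text{ on }\Sigma.
\]

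Next I would turn the right-hand side into a genuine, $e$-free source. By the local Lipschitz continuity of $f_y$ (Assumption \ref{exist.2}, with constant $\bar C$ on the range fixed through the uniform bound \eqref{unibound}) one has $|a_1-a_\theta|\le\tfrac{\bar C}{2}|w_1-w_\theta|$, so there exists $b\in L^\infty(Q)$ with $|b|\le\tfrac{\bar C}{2}$ and $a_1-a_\theta = b(w_1-w_\theta)=b((1-\theta)w_1-e)$. Substituting and absorbing the $e$-dependent part into the zeroth order term, $e$ solves
\[
\mathcal{L} e + \hat a\,e = \theta(1-\theta)\,b\,w_1^2,\qquad \hat a := a_\theta + \theta b w_1 .
\]
The crucial point is that, by Remark \ref{rebound}, $\|w_1\|_{L^\infty(Q)}\le 2M_{\mathcal U}$ uniformly over $u\in\Uad$, so $\hat a$ is bounded in $L^\infty(Q)$ by a constant $\Lambda$ independent of $u$ and $\theta$ with $\hat a\ge-\Lambda$, while the pointwise bound $|w_1^2|\le\|w_1\|_{L^\infty(Q)}|w_1|$ gives $\|b\,w_1^2\|_{L^r(Q)}\le\bar C M_{\mathcal U}|Q|^{1/r}\|w_1\|_{L^\infty(Q)}$ and $\|b\,w_1^2\|_{L^2(Q)}\le\bar C M_{\mathcal U}\|w_1\|_{L^2(Q)}$.

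To estimate $e$ I would remove the indefinite sign of $\hat a$ by the rescaling $e=\mathrm e^{\Lambda t}v$, which leads to $\mathcal{L} v + (\Lambda+\hat a)v = \mathrm e^{-\Lambda t}\theta(1-\theta)b w_1^2$ with nonnegative coefficient $\Lambda+\hat a\ge 0$ and zero initial and boundary data, so that Lemma \ref{mainex} applies with constants independent of the coefficient. For $X=L^\infty(Q)$, the second item of Lemma \ref{mainex} gives $\|e\|_{L^\infty(Q)}\le\mathrm e^{\Lambda T}C_r\|b w_1^2\|_{L^r(Q)}\lesssim\|w_1\|_{L^\infty(Q)}$; for $X=L^2(Q)$, the first item together with Poincar\'e's inequality gives $\|e\|_{L^2(Q)}\lesssim\|b w_1^2\|_{L^2(Q)}\lesssim\|w_1\|_{L^2(Q)}$. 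In both cases $\|e\|_X\le B'\|w_1\|_X$ with $B'$ independent of $u$ and $\theta$, and \eqref{E3.14} follows with $B_X := B'+1$.

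The main obstacle is precisely that the comparison must be global in $u\in\Uad$: for $u$ far from $\bar u$ the linearization coefficients $a_\theta$ and $a_1$ genuinely differ, and the resulting difference equation carries a quadratic source $\sim w_1^2$ that is not obviously dominated by $\|w_1\|_X$. This is overcome only by invoking the uniform $L^\infty$ bound \eqref{unibound} on all admissible states, which simultaneously keeps the zeroth order coefficient $\hat a$ uniformly bounded (hence the a priori constant uniform over $\Uad$) and lets one trade one power of $w_1$ in the source for the fixed constant $2M_{\mathcal U}$, converting the quadratic estimate into the required linear one. The remaining technical issue, the sign indefiniteness of $\hat a$ that blocks a direct appeal to Lemma \ref{mainex}, is disposed of routinely by the exponential rescaling above.
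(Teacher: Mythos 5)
Your proof is correct, and all the ingredients you invoke are available in the paper: convexity of $\Uad$, the integral mean value theorem with $f_y\ge 0$ (Assumption \ref{exist.2}), the uniform bound \eqref{unibound}, and the $\alpha$-independence of the constants in Lemma \ref{mainex}. Note, however, that the paper does not prove the lemma itself; it cites \cite[Lemma 3.5]{CDJ2022} and \cite[Lemma 3.5]{CMR}, and the argument there is organized differently. The cited proof exploits the affine control structure by \emph{factoring out} $\theta$: by the mean value theorem, $y_{u_\theta}-\bar y=\theta\,\phi_\theta$, where $\phi_\theta$ solves $\mathcal L\phi+f_y(\cdot,\cdot,\hat y_\theta)\phi=\langle g,u-\bar u\rangle$ with a nonnegative coefficient, and likewise $y_u-\bar y=\phi_1$; then $\phi_\theta-\phi_1$ solves $\mathcal L w+a_\theta w=(a_1-a_\theta)\phi_1$, whose right-hand side is \emph{free of the unknown}, so Lemma \ref{mainex} applies at once and yields $\|\phi_\theta-\phi_1\|_X\le C\|\phi_1\|_X$ with $C$ uniform by \eqref{unibound}. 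Your decomposition $e=w_\theta-\theta w_1$ reaches the same conclusion but at the cost of the identity $w_1-w_\theta=(1-\theta)w_1-e$, which reintroduces the unknown $e$ on the right-hand side; you then have to absorb it into a zeroth-order coefficient $\hat a=a_\theta+\theta b w_1$ that loses the sign $\hat a\ge 0$, and repair this with the exponential rescaling $e=\mathrm e^{\Lambda t}v$. That trick is legitimate (the operator $\mathcal A$ has time-independent coefficients, so the rescaling commutes with it, and $\Lambda+\hat a\ge 0$ is uniformly bounded by \eqref{unibound} and Remark \ref{Rlip}), but it is machinery the reference route avoids entirely: choosing the comparison object $\theta\phi_\theta$ instead of $\theta w_1$ keeps the linearization coefficient nonnegative by construction. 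What your version buys is a slightly more self-contained statement --- you never introduce the auxiliary functions $\phi_\theta$ and work only with the actual state deviations --- and it makes explicit where the quadratic source $w_1^2$ arises and why \eqref{unibound} converts it into a linear bound, which is exactly the obstruction you correctly identified as the heart of the lemma.
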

We prove the analogous statement for the adjoint-state. For an elliptic state equation, a proof is given in 
\cite[Lemma 3.7]{CDJ2022}.

\begin{Lemma} \label{L3.2}
Let $X=L^\infty(Q)$ or $L^2(Q)$. Given $\bar u \in \Uad$ with associated state $\bar y$ and adjoint-state $\bar p$, 
then there exists a constant $\tilde B_X>0$ such that
\begin{equation}
\|p_{\bar u + \theta(u - \bar u)} - \bar p\|_{X} \le \tilde B_X(\|y_u - \bar y\|_{X} +\| u-\bar u\|_{L^1(0,T)^m}^{\frac{1}{r}}),
\label{E3.16}
\end{equation}
for all $\theta \in [0,1] \text{ and }u \in \Uad$. If the function $L_1$ in the objective functional is independent of $y$, 
then there exists a constant $\tilde B_X>0$ 
such that
\begin{equation}
\|p_{\bar u + \theta(u - \bar u)} - \bar p\|_{X} \le \tilde B_X\|y_u - \bar y\|_{X},
\label{E3.16.2}
\end{equation}
for all $\theta \in [0,1] \text{ and }u \in \Uad$.
\end{Lemma}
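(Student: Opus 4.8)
The plan is to derive the linear (adjoint) parabolic equation solved by the difference $\delta p := p_{u_\theta} - \bar p$, where $u_\theta := \bar u + \theta(u - \bar u)$ and $y_\theta := y_{u_\theta}$, and then to apply the $\alpha$-independent a priori estimates of Lemma \ref{mainex}, after splitting its source into one part driven by the state difference $y_\theta - \bar y$ and one part driven by the control difference $u - \bar u$. Writing the adjoint equation \reff{E3.6} for $u_\theta$ and for $\bar u$, subtracting, and moving the $\bar p$-terms to the right, one finds that $\delta p$ solves $\mathcal L^*\delta p + \alpha\,\delta p = h$ with $\delta p(\cdot,T)=0$, where $\alpha := \frac{\partial f}{\partial y}(\cdot,\cdot,y_\theta)\ge 0$ and, abbreviating $f_y:=\partial f/\partial y$, $L_{0,y}:=\partial L_0/\partial y$, $L_{1,y}:=\partial L_1/\partial y$,
\begin{align*}
h = {}& -\big[f_y(\cdot,\cdot,y_\theta) - f_y(\cdot,\cdot,\bar y)\big]\bar p + \big[L_{0,y}(\cdot,\cdot,y_\theta) - L_{0,y}(\cdot,\cdot,\bar y)\big] \\
& + \big[\langle L_{1,y}(\cdot,\cdot,y_\theta),u_\theta\rangle - \langle L_{1,y}(\cdot,\cdot,\bar y),\bar u\rangle\big].
\end{align*}
The sign condition $\alpha\ge 0$ from Assumption \ref{exist.2} is precisely what lets Lemma \ref{mainex} apply with constants independent of $\alpha$, which is essential here since $\alpha$ varies with $u$; after the time reversal $t\mapsto T-t$ (which turns $\mathcal L^*$ into a forward operator that is still uniformly elliptic by Assumption \ref{exist.1}), the lemma is directly applicable.

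Next I would decompose $h = h_1 + h_2$. Splitting the last bracket as $\langle L_{1,y}(\cdot,\cdot,y_\theta)-L_{1,y}(\cdot,\cdot,\bar y),u_\theta\rangle + \theta\,\langle L_{1,y}(\cdot,\cdot,\bar y),u-\bar u\rangle$, I collect the first summand together with the two preceding brackets into $h_1$, and set $h_2 := \theta\,\langle L_{1,y}(\cdot,\cdot,\bar y),u-\bar u\rangle$. Each contribution to $h_1$ is bounded pointwise by a constant times $|y_\theta - \bar y|$, using the local Lipschitz continuity of $f_y$, $L_{0,y}$, $L_{1,y}$ (Assumption \ref{exist.2}), the bound $\|\bar p\|_{L^\infty(Q)}<\infty$, and the uniform bound on $u_\theta$ from Remark \ref{rebound}. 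The term $h_2$ is the only one carrying the control difference directly, and it is nonzero only when $L_1$ genuinely depends on $y$.

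Finally I would estimate the two pieces and invoke Lemma \ref{mainex} for the solutions driven by $h_1$ and $h_2$ separately. For $h_1$ I use the pointwise bound to get $\|h_1\|_{L^2(Q)}\le C\|y_\theta-\bar y\|_{L^2(Q)}$ (for $X=L^2(Q)$) or $\|h_1\|_{L^r(Q)}\le C\|y_\theta-\bar y\|_{L^\infty(Q)}$ (for $X=L^\infty(Q)$, absorbing $\mathrm{meas}(Q)$), and then replace $\|y_\theta-\bar y\|_X$ by $B_X\|y_u-\bar y\|_X$ via Lemma \ref{L3.1}, \reff{E3.14}; the estimate \reff{wl2} then controls the corresponding part of $\delta p$ in $L^2(Q)$ and \reff{clr} in $L^\infty(Q)$. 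For $h_2$ the key observation is that, since $\mathcal U$ is bounded in $L^\infty(0,T)^m$ by $M_{\mathcal U}$ (Remark \ref{rebound}), interpolation gives $\|u-\bar u\|_{L^r(0,T)^m}^r \le (2M_{\mathcal U})^{r-1}\|u-\bar u\|_{L^1(0,T)^m}$; combined with $L_{1,y}(\cdot,\cdot,\bar y)\in L^\infty(Q)^m$ and $\mathrm{meas}(\Omega)<\infty$ this yields $\|h_2\|_{L^r(Q)}\le C\|u-\bar u\|_{L^1(0,T)^m}^{1/r}$, whence \reff{clr} bounds its contribution to $\delta p$ in both $L^\infty(Q)$ and $L^2(Q)$ by $C\|u-\bar u\|_{L^1(0,T)^m}^{1/r}$. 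Adding the two contributions proves \reff{E3.16}; when $L_1$ is independent of $y$ the $L_{1,y}$-terms vanish identically, so $h=h_1$ is controlled by $\|y_u-\bar y\|_X$ alone and \reff{E3.16.2} follows. I expect the main obstacle to be the careful norm bookkeeping that produces exactly the exponent $1/r$, and keeping the constants in the linear estimates independent of the state-dependent coefficient $\alpha$.
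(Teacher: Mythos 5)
Your proof is correct and follows essentially the same route as the paper's: subtract the two adjoint equations, bound the resulting source pointwise by a constant times $|y_\theta-\bar y|$ plus the control-difference term $\langle L_{1,y}(\cdot,\cdot,\bar y),u_\theta-\bar u\rangle$, apply the $\alpha$-independent estimates of Lemma \ref{mainex}, replace $\|y_\theta-\bar y\|_X$ by $B_X\|y_u-\bar y\|_X$ via Lemma \ref{L3.1}, and obtain the exponent $1/r$ from the $L^\infty$-boundedness of $\mathcal U$ exactly as in the paper's constant $C_X\big(B_X\|y_u-\bar y\|_X+\vert\Omega\vert(2M_{\mathcal U})^{\frac{r-1}{r}}\|u-\bar u\|_{L^1(0,T)^m}^{1/r}\big)$. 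The only (harmless) bookkeeping difference is that you take $\alpha=f_y(\cdot,\cdot,y_\theta)$ and keep the fixed $\bar p$ in the source, whereas the paper takes $\alpha=f_y(\cdot,\cdot,\bar y)$ and keeps $p_\theta$ there, which then requires the uniform bound on the perturbed adjoint states; both choices are valid.
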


\begin{proof}{}
Let us prove \eqref{E3.16}. Given $u \in \mathcal U$ and $\theta \in [0,1]$, let us denote 
$u_\theta = \bar u + \theta(u - \bar u)$, $y_\theta = y_{u_\theta}$, and $p_\theta = p_{u_\theta}$. 
Subtracting the equations satisfied by $p_\theta$ and $\bar p$ we get with the mean value theorem
\begin{align*}
&-\frac{d}{dt}(p_\theta - \bar p)+\mathcal A^ *(p_\theta - \bar p) + \frac{\partial f}{\partial y}(x,t,\bar y)(p_\theta - \bar p) 
= \frac{\partial L}{\partial y}(x,t,y_\theta,u_\theta) - \frac{\partial L}{\partial y}(x,t,\bar y,\bar u)\\ 
&+ \Big[\frac{\partial f}{\partial y}(x,t,\bar y) - \frac{\partial f}{\partial y}(x,t,y_\theta)\Big]p_\theta\\
& = \frac{\partial L_0}{\partial y}(x,t,y_\theta) - \frac{\partial L_0}{\partial y}(x,t,\bar y)+
\Big\langle  \frac{\partial L_1}{\partial y}(x,t,y_\theta) - \frac{\partial L_1}{\partial y}(x,t,\bar y), u_\theta\Big\rangle\\
&+\Big\langle \frac{\partial L_1}{\partial y}(x,t,\bar y) ,u_\theta-\bar u\Big\rangle+ 
\Big[\frac{\partial f}{\partial y}(x,t,\bar y) - \frac{\partial f}{\partial y}(x,t,y_\theta)\Big]p_\theta\\
&=\frac{\partial^2 L_0}{\partial y^2}(x,t,y_{\vartheta_1})(y_\theta-\bar u)+
\sum_{1\leq j\leq m}\frac{\partial^2 L_{1,j}}{\partial^2 y}(x,t,y_{\vartheta_j}) u_{j,\theta}(y_{\theta}-\bar y)\\
&+\Big\langle \frac{\partial L_1}{\partial y}(x,t,\bar y) ,u_\theta-\bar u\Big\rangle+
\frac{\partial^2 f}{\partial y^2}(x,t, y_{\vartheta_{m+1}})(\bar y-y_\theta)p_\theta,
\end{align*}
where $y_{\vartheta_{i}}=\bar y+\vartheta_{i}(y_\theta - \bar y)$ for some measurable functions 
$\vartheta_{i}:Q \longrightarrow [0,1]$, $i=0,...,m+1$. Now, we can apply Theorem \ref{mainex} and Remark \ref{Rlip} 
to conclude from the above equation the existence of a constant $C_X>0$ such that
\begin{align*}
\|p_\theta - \bar p\| _{X}&\le C_X(\|y_\theta - \bar y\|_{X}+ \|u-\bar u\|_{L^r(Q)})\\
&\leq C_X(B_X\|y_u-\bar y\|_{X}+\vert \Omega \vert (2M_{\mathcal U})^{\frac{r-1}{r}}\|u-\bar u\|_{L^1(0,T)^m}^{\frac{1}{r}}).
\end{align*}
Defining $\tilde B_X:=C_X(B_X+\vert \Omega \vert (2M_{\mathcal U})^{\frac{r-1}{r}})$, with $B_X$ being 
the constant from Lemma \ref{L3.1}, concludes the proof of the first claim. The second claim follows by the same argument and the fact that the right hand side of the equation satisfied by $p_\theta - \bar p$ does not depend on $L_1$.
\end{proof}

Below we shall use the next lemma, the proof of which can be found for linear elliptic equations in 
\cite[Lemma 2.3]{CDJ2022} and for parabolic equations in \cite{CJV2022b}.

\begin{Lemma} \label{estLs}
Let $u\in L^{r}(Q)$ and $0\leq \alpha\in L^\infty(Q)$. Let $y_u$ be the unique solution of \eqref{lin.1} and 
let $p_u$ be a solution of the problem
\begin{align}
		\left\{\begin{array}{l}
		-\frac{\partial p}{\partial t}+\mathcal A^*p+\alpha p = u\  \text{ in }\ Q,\\
		p=0 \text{ on } \Sigma,\ p(\cdot,T)=0\ \text{ on } \Omega.
	\end{array} \right.
	\label{adjlin}
\end{align}
Then, for any $s_n\in [1,\frac{n+2}{n})$ there exists a constant $C_{s'_{n}}>0$ independent of $u$ and $\alpha$ such that 
	\begin{equation}
		\max\{\|y_u\|_{L^{s_n}(Q)},\| p_u\|   _{L^{s_n}(Q)}\} \leq C_{s'_{n}}\|u\|_{L^{1}(Q)}.
	\label{aeqestLs}	
	\end{equation}
Here $s'_n$ denotes the H\"older conjugate of $s_n$.
\end{Lemma}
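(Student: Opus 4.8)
The plan is to establish both bounds by a duality argument that converts the $L^{1}(Q)\to L^{s_n}(Q)$ estimate into an $L^{s_n'}(Q)\to L^\infty(Q)$ a priori estimate for the formally dual parabolic problem. The key observation is that the exponent restriction $s_n<\frac{n+2}{n}$ is precisely equivalent to $s_n'>\frac{n+2}{2}=1+\frac n2$, which places the right-hand side of the dual problem above the parabolic embedding threshold guaranteeing bounded solutions, so that the sharp $L^\infty$ regularity is available.

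First I would treat $y_u$. Using the duality characterization
\[
\|y_u\|_{L^{s_n}(Q)}=\sup\Big\{\int_Q y_u\,\varphi\dx\dt \;:\; \varphi\in L^{s_n'}(Q),\ \|\varphi\|_{L^{s_n'}(Q)}\le 1\Big\},
\]
I fix such a $\varphi$ and let $w$ be the weak solution of the backward problem
\[
-\frac{\partial w}{\partial t}+\mathcal{A}^*w+\alpha w=\varphi\ \text{ in }\ Q,\qquad w=0\ \text{ on }\ \Sigma,\qquad w(\cdot,T)=0.
\]
Testing the equation for $y_u$ with $w$ and the equation for $w$ with $y_u$, and integrating by parts in time (the boundary terms at $t=0$ and $t=T$ vanish since $y_u(\cdot,0)=0$ and $w(\cdot,T)=0$), I obtain the identity $\int_Q y_u\,\varphi\dx\dt=\int_Q u\,w\dx\dt$, whence
\[
\Big|\int_Q y_u\,\varphi\dx\dt\Big|\le \|u\|_{L^1(Q)}\,\|w\|_{L^\infty(Q)}.
\]
The crux is then the a priori bound $\|w\|_{L^\infty(Q)}\le C\|\varphi\|_{L^{s_n'}(Q)}$, with $C$ independent of $\varphi$ and of $\alpha$. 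Since $s_n'>1+\frac n2$, this is exactly the backward (time-reversed) analogue of estimate \eqref{clr} in Lemma \ref{mainex}; the independence of $C$ from $\alpha\ge 0$ is inherited from the cited proofs of \eqref{clr}, using that the zeroth-order term $\alpha w$ carries the favourable sign. Taking the supremum over $\varphi$ gives $\|y_u\|_{L^{s_n}(Q)}\le C_{s_n'}\|u\|_{L^1(Q)}$.

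For $p_u$ I would argue symmetrically, the dual test problem now being the forward equation $\partial_t v+\mathcal{A}v+\alpha v=\varphi$ with $v(\cdot,0)=0$; the same integration by parts yields $\int_Q p_u\,\varphi\dx\dt=\int_Q u\,v\dx\dt$, and the bound $\|v\|_{L^\infty(Q)}\le C\|\varphi\|_{L^{s_n'}(Q)}$ follows from \eqref{clr} directly. The main obstacle is this sharp $L^\infty$ estimate for the dual solution: one must verify that $s_n'>1+\frac n2$ genuinely suffices (rather than the more restrictive $s_n'>\max\{2,1+\frac n2\}$ implicit in \eqref{Ern}, which is a real gap only when $n=1$), so that the bound has to be read off from the sharp parabolic maximal-regularity/embedding theory rather than from Lemma \ref{mainex} verbatim, and one must keep careful track that the constant stays bounded as $\|\alpha\|_{L^\infty(Q)}$ grows.
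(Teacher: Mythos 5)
Your duality argument is essentially the proof the paper relies on: the paper gives no inline proof of Lemma \ref{estLs} but defers to \cite[Lemma 2.3]{CDJ2022} and \cite{CJV2022b}, where the estimate is obtained in precisely this way---pairing the solution with the dual parabolic problem with right-hand side $\varphi\in L^{s_n'}(Q)$, integrating by parts, and invoking the $L^\infty$ bound for the dual solution with constant independent of $\alpha\ge 0$. Your remark that the condition $s_n'>1+\frac{n}{2}$ requires the sharp parabolic $L^\infty$ regularity (rather than Lemma \ref{mainex} verbatim, whose exponent restriction \eqref{Ern} is genuinely stronger only when $n=1$) is a correct and careful reading of the exponent bookkeeping, consistent with the cited proofs.
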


\begin{proof}{ of Lemma \ref{biglemmat}}
The second variation of the objective functional is given by Theorem \ref{T3.1}. Let us denote $u_\theta$, $y_\theta$, 
and $\varphi_\theta$ as in the proof of Lemma \ref{L3.2}. From \eqref{E3.5} we obtain that
\begin{align*}
&\vert  [J''(\bar u + \theta(u - \bar u)) - J''(\bar u)](u - \bar u)^2\vert\\
&\le \int_Q\Big\vert  \Big[\frac{\partial^2L}{\partial y^2}(x,t,y_\theta,u_\theta) - 
\frac{\partial^2L}{\partial y^2}(x,t,\bar y,\bar u)\Big] z_{u_\theta,u - \bar u}^2\Big\vert\dx\dt+ 
\int_Q\Big\vert  (\bar\varphi - \varphi_\theta)\frac{\partial^2f}{\partial y^2}(x,t,y_\theta)z_{u_\theta,u - \bar u}^2\Big\vert  \dx\dt\\
&+ \int_Q\Big\vert  \bar\varphi\Big[\frac{\partial^2f}{\partial y^2}(x,t,\bar y) - 
\frac{\partial^2f}{\partial y^2}(x,t,y_\theta)\Big]z_{u_\theta,u - \bar u}^2\Big\vert  \dx\dt\\
& + \int_Q\Big\vert  \Big[\frac{\partial^2L}{\partial y^2}(x,t,\bar y)- 
\bar\varphi\frac{\partial^2f}{\partial y^2}(x,t,\bar y)\Big](z^2_{u_\theta,u - \bar u} - z^2_{\bar u,u - \bar u})\Big\vert  \dx\dt\\
&+2\int_Q\Big\vert   \Big\langle \frac{\partial L_1}{\partial y}(x,t,y_{\theta})-
\frac{\partial L_1}{\partial y}(x,t,\bar y), z_{u_\theta,u - \bar u}(u-\bar u)\Big\rangle\Big\vert  \dx\dt\\
&+2\int_Q \Big\vert  \Big\langle\frac{\partial L_1}{\partial y}(x,t,\bar y), (z_{u_\theta,u - \bar u}-
z_{\bar u,u - \bar u})(u-\bar u)\Big\rangle\Big\vert  \dx\dt\\
& = I_1 + I_2 + I_3 + I_4+I_5+I_6.
\end{align*}
The first term, $I_1$, can be estimated as
\begin{align*}
I_1&\leq\int_Q\Big\vert  \Big[\frac{\partial^2L_0}{\partial y^2}(x,t,y_\theta) - 
\frac{\partial^2L_0}{\partial y^2}(x,t,\bar y)\Big] z_{u_\theta,u - \bar u}^2\Big\vert\dx\dt\\
&+ \int_Q\Big\vert \Big[ \Big\langle \frac{\partial^2 L_1}{\partial y^2}(x,t,y_{\theta})-
\frac{\partial^2 L_1}{\partial y^2}(x,t,\bar y),u_\theta\Big\rangle  +
\Big \langle \frac{\partial^2 L_1}{\partial y^2}(x,t,\bar y),u_\theta-\bar u \Big\rangle\Big] z_{u_\theta,u - \bar u}^2\Big\vert\dx\dt\\
&=I_{1,1}+I_{1,2}+I_{1,3}.
\end{align*}
For the first two terms, we deduce from Assumption \ref{exist.2}, Remark \ref{rebound}, 
Remark \ref{Rlip}, \eqref{E3.14}, \eqref{E2.14.2} and \eqref{E2.15.2}, 
that for every $\rho_{1,i} > 0$ there exists $\varepsilon_{1,i} > 0$ such that
\[
I_{1,i} \le \rho_{1,i}\|z_{\bar u,u - \bar u}\|^2_{L^2(Q)}\quad  \text{if}\quad \|y_u - \bar y\|_{L^\infty(Q)} < \varepsilon_{1,i}, \ \ i=1,2.
\]
For $I_{1,3}$ we estimate under Assumption \ref{exist.2}, Remark \ref{rebound}, Remark \ref{Rlip}, \eqref{E3.14}, 
\eqref{l2l1timeest}, \eqref{E2.14.2}, \eqref{E2.15.2}, that for $\|y_u - \bar y\|_{C(\bar Q)} $ sufficiently small
\begin{align*}
&\int_Q \Big\vert \Big \langle \frac{\partial^2 L_1}{\partial y^2}(x,t,\bar y),u_\theta-
\bar u \Big\rangle z_{u_\theta,u - \bar u}^2\Big\vert\dx\dt\\
&\leq \| z_{u_\theta,u-\bar u}\|_{L^\infty(0,T,L^2(\Omega)}^2\|u_\theta-\bar u\|_{L^1(0,T)^m} \max_{j=1,...,m}\Big\| 
\frac{\partial^2L_{1,j}}{\partial y^2}(x,t,\bar y)\Big\|_{L^\infty(Q)}\\
&\leq \frac{9}{4} \vert \Omega\vert^{\frac{1}{2}} \exp(\|\frac{\partial f}{\partial y}(\cdot,\cdot,y)\|_{L^\infty(Q)}) 
\| \bar y -y_u\|_{L^\infty(Q)}\|u-\bar u\|_{L^1(0,T)^m}^2 \max_{j=1,...,m}\| g_j\|_{L^\infty(Q)} 
\max_{j=1,...,m}\Big\| \frac{\partial^2L_{1,j}}{\partial y^2}(x,t,\bar y)\Big\|_{L^\infty(Q)}.
\end{align*}
We can therefore infer, that for every $\rho_{1,3} > 0$ there exists $\varepsilon_{1,3}> 0$ such that
\[
I_{1,3} \le \rho_1\|u-\bar u\|_{L^1(0,T)^m}^2\quad \text{if}\quad \|y_u - \bar y\|_{L^\infty(Q)} < \varepsilon_{1,3}.
\]
For the term $I_2$, we first consider the general case. 
Using Assumption \ref{exist.2}, Remark \ref{rebound}, Remark \ref{Rlip}, \eqref{E3.14}, \eqref{E3.16}, \eqref{E2.14.2} 
and \eqref{E2.15.2}, we find for any $\rho_2>0$ a $\varepsilon_2>0$ such that
\begin{align*}
I_2 &\le\frac{9}{4}\bar{C}\tilde BC_r(2M_{\mathcal U})^\frac{r-1}{r})\| u-\bar u\|_{L^1(0,T)^m}^{\frac{1}{r}} 
\| z_{\bar u,u - \bar u}\|_{L^2(Q)}^2\leq 
\rho_2\|z_{\bar u,u - \bar u}\|^2_{L^2(Q)}\quad \text{if}\quad \| u - \bar u\|   _{L^1(0,T)^m} < \varepsilon_2.
\end{align*}
In the case that $\frac{\partial L_1}{\partial y}\equiv 0$, we deduce from Assumption \ref{exist.2}, Remark \ref{rebound}, 
Remark \ref{Rlip}, \eqref{E3.14}, \eqref{E3.16}, \eqref{E2.14.2} and \eqref{E2.15.2}, 
that for every $\rho_2 > 0$ there exists $\varepsilon_2> 0$ such that
\[
I_2 \le \rho_j\|z_{\bar u,u - \bar u}\|^2_{L^2(Q)}\quad  \text{if}\quad \|y_u - \bar y\|_{L^\infty(Q)} < \varepsilon_2.
\]
For the term $I_3$ we deduce from Assumption \ref{exist.2}, Remark \ref{rebound}, Remark \ref{Rlip}, \eqref{E3.14}, 
\eqref{E3.16}, \eqref{E2.14.2} and \eqref{E2.15.2}, 
that for every $\rho_3 > 0$ there exists $\varepsilon_3 > 0$ such that
\[
I_3 \le \rho_3\|z_{\bar u,u - \bar u}\|^2_{L^2(Q)}\quad  \text{if}\quad \|y_u - \bar y\|_{L^\infty(Q)} < \varepsilon_3
\]
For $I_4$ we define $\psi:=
z_{\bar u,u - \bar u}-z_{u_\theta,u - \bar u}$. $\psi$ solves the equation
\begin{align*}
\frac{d \psi}{dt}+\mathcal{A}\psi + \frac{\partial f}{\partial y}(x,t,y_{\bar u})\psi &= 
\Big[ \frac{\partial f}{\partial y}(x,t,y_{u_\theta})-\frac{\partial f}{\partial y}(x,t,y_{\bar u})\Big]z_{u_\theta,u - \bar u}
= \frac{\partial^2f}{\partial y^2}(x,t,y_\vartheta)(y_{\bar u} - y_{u_\theta})z_{u_\theta,u - \bar u},
\end{align*}
where we used the mean value theorem to infer the existence of a function $\vartheta$ such that the above holds.
We apply \eqref{wl2} to $\psi$ and estimate
\begin{align*}
I_4 &\le \Big\| \frac{\partial^2L}{\partial y^2}(x,t,\bar y)- 
\bar\varphi\frac{\partial^2f}{\partial y^2}(x,t,\bar y)\Big\|_{L^\infty(Q)}\|   z_{u_\theta,u - \bar u} + 
z_{\bar u,u - \bar u}\|_{L^2(Q)}\| z_{u_\theta,u - \bar u} - 
z_{\bar u,u - \bar u}\|   _{L^2(Q)}\\
& \le \frac{3M_{2}}{2}\| y_\theta - \bar y\| _{L^\infty(Q)}\|   z_{\bar u,u - \bar u}\|   _{L^2(Q)}^2.
\end{align*}
Then by Assumption \ref{exist.2}, Remark \ref{rebound}, Remark \ref{Rlip}, \eqref{E3.14}, \eqref{E2.14.2} 
and \eqref{E2.15.2}, for every $\rho_4 > 0$ there exists $\varepsilon_4>0$ such that 
\begin{equation}
I_4\le \rho_4\| z_{\bar u,u - \bar u}\|^2_{L^2(Q)} \quad \text{if}\quad \|y_u - \bar y\|   _{L^\infty(Q)} < \varepsilon_4.
\end{equation}
The term $I_5$, can be estimate similar as $I_{1,3}$, therefore under under Assumption \ref{exist.2}, Remark \ref{rebound}, 
Remark \ref{Rlip}, \eqref{E3.14}, \eqref{l2l1timeest}, \eqref{E2.14.2}, \eqref{E2.15.2}, for every $\rho_5> 0$ 
there exists $\varepsilon_5>0$ such that 
\[
I_5 \le \rho_5\|u-\bar u\|_{L^1(0,T)^m}^2\quad  \text{if}\quad \|y_u - \bar y\|_{L^\infty(Q)} < \varepsilon_5.
\]
To estimate $I_6$, we select $s$ as in Lemma \ref{estLs} and apply \eqref{aeqestLs} to $\psi$ and estimate for 
$\|y_u - \bar y\|_{L^\infty(Q)}$ sufficiently small
\begin{align*}
I_6 &\leq
2 M_{\mathcal U}^{\frac{s'-1}{s'}}\max_{j=1,..,m}\Big\| \frac{\partial L_{1,j}}{\partial y}(\cdot,\bar y(\cdot))\Big\|_{L^\infty(Q)}
\| u - \bar u\|_{L^{1}(0,T)^m}^{\frac{1}{s'}}\|  z_{u_\theta,u - \bar u}-z_{\bar u,u - \bar u}\|_{L^s(Q)}\\
&\le 2M_{\mathcal U}^{\frac{s'-1}{s'}}\|\frac{\partial^2f}{\partial y^2}(x,t,y_\vartheta)\|_{L^\infty(Q)} \max_{j=1,..,m}\Big\| 
\frac{\partial L_{1,j}}{\partial y}(\cdot,\bar y(\cdot))\Big\|_{L^\infty(Q)}\| u - \bar u\|_{L^{1}(0,T)^m}^{\frac{1}{s'}}
 \|y_{u_\theta} - \bar y\| _{L^2(Q)}\| z_{\bar u_{\theta},u - \bar u}\| _{L^2(Q)}.
\end{align*}
Thus, depending on the chosen estimation, under Assumption \ref{exist.2}, Remark \ref{rebound}, 
Remark \ref{Rlip}, \eqref{E3.14}, \eqref{E2.14.2} and \eqref{E2.15.2}, for every $\rho_6> 0$ there exists 
$\varepsilon_6>0$ such that
\[
I_6 \le \rho_6\| z_{\bar u,u - \bar u}\| _{L^2(Q)}^2\quad  \text{if}\quad \|u-\bar u\|_{L^1(0,T)^m} < \varepsilon_6.
\]
We remark that by \eqref{clr}, 
\[
\|u-\bar u\|_{L^1(0,T)^m}<\frac{\varepsilon^r}{(C_r\vert \Omega\vert^{\frac{1}{r}}\|g\|_{L^\infty(\Omega)^m}
\|u_a-u_b\|_{L^\infty(0,T)^m}^\frac{r-1}{r})^{r}}
\]
implies $\|y_u - \bar y\|_{L^\infty(Q)} < \varepsilon$. \\
If the function $L_1$ in the objective functional is independent of $y$, the problematic parts in the terms  $I_{1}, I_2, I_5 $ 
and $I_6$ are absent. Further, if the function $L_1$ is affine with respect to $y$, 
the problematic parts in the terms  $I_{1}, I_2, I_5 $ and $I_6$ are either absent or can be estimated under 
the condition that $\| u - \bar u\|_{L^{1}(0,T)^m}$ is sufficiently small. If this is not the case, we only obtain item \ref{drei} 
of the Lemma \ref{biglemmat}.
Depending on the terms in the objective functional, by taking $\rho_i$ so small that $I_i < \frac{\rho}{6}$ 
for every $i\in \{1,..,6\}$ and setting $\varepsilon = 
\min_{1 \le i \le 6}\varepsilon_i$, we complete the proof.
\end{proof}



\end{document}